\theoremstyle{plain}
\newtheorem{thm}{Theorem}[section]
\newtheorem{defn}[thm]{Definition}
\newtheorem{lemma}[thm]{Lemma}
\newtheorem{example}[thm]{Example}
\newtheorem{prop}[thm]{Proposition}
\newtheorem{remark}[thm]{Remark}
\newtheorem*{thm*}{Main Theorem}
\tikzset{
  symbol/.style={
    draw=none,
    every to/.append style={
      edge node={node [sloped, allow upside down, auto=false]{$#1$}}}
  }
}
\begin{document}
\thispagestyle{empty}

\title{\textbf{On the quasicompactness of the moduli stack of logarithmic $G$-connections over a curve}}

\author{Andres Fernandez Herrero}
\date{}
\maketitle
\begin{abstract}
Fix a smooth projective curve over a field of characteristic zero and a finite set of punctures. Let $G$ be a connected linear algebraic group. We prove that the moduli of $G$-bundles with logarithmic connections having fixed residue classes at the punctures is an algebraic stack of finite type.
\end{abstract}

\tableofcontents
\begin{section}{Introduction}
Fix a curve $C$ that is smooth projective and geometrically irreducible over a field $k$ of characteristic $0$. Let $G$ be a connected linear algebraic group over $k$. If $k = \mathbb{C}$, then the curve $C$ is a Riemann surface. The Riemann-Hilbert correspondence establishes an equivalence between the groupoid of regular $G$-connections on $C$ and the groupoid of homomorphism from the topological fundamental group $\pi_1(C)$ into $G(\mathbb{C})$. 

One would like to view the moduli stack $\text{Conn}_{G}(C)$ of regular $G$-connections on $C$ as an algebraic avatar of the moduli of representations of $\pi_1(C)$. Motivated by this, we show that $\text{Conn}_{G}(C)$ is an algebraic stack of finite type for an arbitrary connected linear algebraic group $G$ over any field of characteristic $0$.

More generally, the Riemann-Hilbert correspondence can be established for a noncompact Riemann surface. Fix a finite set $\{x_i\}$ of $k$-points $x_i \in C$. Set $D= \sum_i x_i$ to be the corresponding reduced divisor. In this case one would like to study representations of the fundamental group of $C \setminus D$. The corresponding objects considered in this context are logarithmic connections with poles at $D$ \cite{deligne.regulier}. This logarithmic version of the Riemann-Hilbert correspondence gives an analytic description of the monodromy around a puncture $x_i$, meaning the conjugacy class of the element of $G(\mathbb{C})$ determined by a loop around $x_i$. Namely, the monodromy around $x_i$ is related to the residue $O_i$ of the corresponding logarithmic connection at $x_i$. Here $O_i$ is an adjoint orbit in the Lie algebra of $G$.

The notion of logarithmic connection is algebraic; there is an algebraic stack $\text{Conn}_{G}^{D}(C)$ parametrizing logarithmic $G$-connections on $C$ with poles at $D$. In view of the Riemann-Hilbert correspondence, it is natural to view this stack as an algebraic avatar of representations of the fundamental group of $C \setminus D$. The main goal of this paper is to study some geometric properties of $\text{Conn}_{G}^{D}(C)$. To this end, we prove the following theorem.
\begin{thm*}[=Theorem \ref{thm: quasicompactness logarithmic connections}]
Let $k$ be a field of characteristic $0$. Let $G$ be a connected linear algebraic group over $k$. Let $\text{Conn}_{G}^{D, O_i}(C)$ denote the moduli stack of $G$-logarithmic connections with poles at $D$ and fixed residue class $O_i$ at each $x_i$. Then $\text{Conn}_{G}^{D, O_i}(C)$ is of finite type over $k$.
\end{thm*}
The boundedness of a related moduli problem for $G = \text{GL}_n$ was proven by Inaba, Iwasaki and Saito \cite{inaba-parabolic}. In that paper they assume that the logarithmic connection comes equipped with a compatible parabolic vector bundle that is semistable with respect to a fixed set of weights.

If $D$ is not empty, we prove in Proposition \ref{prop: characterization of quasicompactness} that $\text{Conn}_G^D(C)$ is quasicompact if and only if $G$ is unipotent. This shows that in most cases of interest it is necessary to restrict to a substack of $\text{Conn}_{G}^{D}(C)$ in order to expect a bounded moduli problem. Theorem \ref{thm: quasicompactness logarithmic connections} says that the substack $\text{Conn}^{D, O_i}_{G}(C)$ with fixed residues is quasicompact. In a different direction, Nitsure \cite{nitsure-logarithmic}[Prop. 3.1] proved that the substack of semistable connections inside $\text{Conn}_G^D(C)$ is quasicompact when $G = \text{GL}_n$. He uses this to construct a quasiprojective coarse moduli space for semistable logarithmic connections \cite{nitsure-logarithmic}[Thm. 3.5].

The first part of this paper deals with the case when the set of punctures $\{x_i\}$ is empty. Then the relevant geometric object is the stack $\text{Conn}_G(C)$ of regular $G$-connection. We prove Proposition \ref{prop: quasicompactness regular connections}, which is a special case of Theorem \ref{thm: quasicompactness logarithmic connections} above. The Harder-Narasimhan filtration for vector bundles is our main tool to show quasicompactness. The proof Proposition \ref{prop: quasicompactness regular connections} is different than for the more general Theorem \ref{thm: quasicompactness logarithmic connections}; it yields better bounds for the Harder-Narasimhan type of the underlying bundle. In the case when $G = \text{GL}_n$, Simpson also proved that $\text{Conn}_{G}(C)$ is of finite type \cite{Simpson-repnI}[Cor. 3.4]. This was one of the main ingredients for his construction of the deRham moduli space in \cite{Simpson-repnI} \cite{simpson-repnII}. See pages 15-16 for a comparison of our approach with Simpson's in the case of $\text{GL}_n$.

The second part of the paper deals with the case when the set of punctures $\{x_i\}$ is nonempty. This part is where we prove our main theorem (Theorem \ref{thm: quasicompactness logarithmic connections}). We also prove an auxiliary result that could be of independent interest. This is Proposition \ref{prop: atiyah class with prescribed residues}. It gives an explicit description of the cohomological obstruction for a $G$-bundle to admit a logarithmic connection with some given set of residues. The proof of Proposition \ref{prop: atiyah class with prescribed residues} is completely algebraic and applies to families over an arbitrary base without any assumptions on the characteristic of the ground field. It generalizes \cite{biswas-criterion-log-prescribed}[Prop. 3.1], which was proven for the group $\text{GL}_n$ over $\mathbb{C}$ using transcendental methods.
\end{section}
\begin{section}{Preliminaries}
\begin{subsection}{Notation}
We work over a fixed perfect ground field $k$. Some results will hold only when the characteristic of $k$ is $0$; we will explicitly mention when this is the case. Unless otherwise stated, all schemes will be understood to be schemes over $k$. An undecorated product of $k$-schemes (e.g. $X\times S$) should always be interpreted as a fiber product over $k$. We will sometimes write $X_S$ instead of $X \times S$. If $R$ is a $k$-algebra and $S$ is a $k$-scheme, we may use the notation $S_R$ to denote the fiber product $S \times \text{Spec}(R)$. If a scheme $t$ is Spec of a field, we will write $\kappa(t)$ for the corresponding coordinate field.

We fix once and for all a curve $C$ that is smooth, projective and geometrically connected over $k$. We let $g$ denote the genus of $C$. We also fix an ample line bundle $\mathcal{O}_{C}(1)$ on $C$. Choose a a finite set $\{x_i\}_{i \in I}$ of $k$-points in $C$. We will denote by $q_i: x_i \rightarrow C$ the closed immersion of $x_i$ into $C$.

Let $G$ be a smooth connected linear algebraic group over $k$. Write $\mathfrak{g}= \text{Lie}(G)$ for the Lie algebra of $G$. There is a representation $Ad: G  \longrightarrow \text{GL}(\mathfrak{g})$ called the adjoint representation. For $G = \text{GL}_n$ it is given by matrix conjugation.

G-bundles will play a prominent role in this paper. Let $X$ be a $k$-scheme. A $G$-bundle over $X$ is a scheme $\pi: \mathcal{P} \rightarrow X$ equipped with a right $G$-action. This action is required to make $\mathcal{P}$ a $G$-torsor in the \'etale topology. This means that for some \'etale cover $T \rightarrow X$ the base-change $\mathcal{P}\times_X T$ is $G$-equivariantly isomorphic to the trivial $G$-torsor $G_T$. An isomorphism between $G$-bundles is an isomorphism of $X$-schemes that intertwines the $G$-actions.

We will often deal with quasicoherent sheaves on schemes. Let $X$ and $Y$ be schemes and let $\mathcal{Q}$ be quasicoherent sheaf on $Y$. If there is a clear implicit choice of morphism $f: X \rightarrow Y$, we will write $\mathcal{Q}|_{X}$ to denote the pullback $f^{*}\mathcal{Q}$ of the quasicoherent sheaf $\mathcal{Q}$ by $f$. The same convention will be used for pullbacks of $G$-bundles.
\end{subsection}
\begin{subsection}{A representability lemma}
Let $X, S$ be $k$-schemes. Suppose that we have a morphism $f: X \longrightarrow S$.
\begin{defn}
 Let $\varphi: A \to X$ be a scheme over $X$. Define $\Gamma_{X /S}(A)$ to be the functor from $S$-schemes to sets given as follows. For any $S$-scheme $g: T \longrightarrow S$, we set
 \begin{gather*}
     \Gamma_{X/S}(A)\, (T) \vcentcolon = \left\{ \; \text{sections $s: X \times_S T \to A \times_S T$ of the morphism $\varphi_{S}: A \times_S T \to X \times_S T$} \, \right\} 
 \end{gather*}
\end{defn}
\begin{lemma} \label{lemma: representability of sections}
Let $X \longrightarrow S$ be a proper flat morphism of finite presentation.  Let $A \to X$ be an affine morphism of finite presentation. The functor $\Gamma_{X/S}(A)$ is represented by a scheme that is relatively affine and of finite presentation over $S$.
\end{lemma}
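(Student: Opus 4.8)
The plan is to represent $\Gamma_{X/S}(\mathcal{E})$ by realizing relative $H^0$ through a complex of vector bundles that computes cohomology compatibly with all base changes, and then to observe that passing to $H^0$ amounts to taking the kernel of a map of vector bundles, which is cut out by finitely many linear equations. Since $\Gamma_{X/S}(\mathcal{E})$ is a Zariski sheaf on $S$ and the asserted representing object is relatively affine, hence its formation is Zariski-local on the base, I would first reduce to the case where $S = \text{Spec}(A)$ is affine. Both the functor and the candidate representing scheme commute with arbitrary base change $S' \to S$, so by standard finite-presentation limit arguments I would further reduce to $A$ Noetherian: write $A$ as a filtered colimit of its finitely generated $k$-subalgebras $A_\lambda$ (each Noetherian), descend the proper flat finitely presented morphism $X \to S$ and the vector bundle $\mathcal{E}$ to some $A_{\lambda_0}$, prove representability there, and base-change back up to $A$.

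Over a Noetherian base I would invoke the theory of cohomology and base change (Grothendieck, Mumford): since $f$ is proper and flat and $\mathcal{E}$ is coherent and $S$-flat, there is a bounded complex $K^\bullet = [\,K^0 \xrightarrow{d^0} K^1 \to \cdots\,]$ of finite locally free $\mathcal{O}_S$-modules such that for every $S$-scheme $g : T \to S$ there is a functorial isomorphism $R^i f_{T*}\mathcal{E}_T \cong \mathcal{H}^i(K^\bullet \otimes_{\mathcal{O}_S}\mathcal{O}_T)$. In degree zero this yields $f_{T*}\mathcal{E}_T = \ker\!\big(d^0_T : K^0\otimes\mathcal{O}_T \to K^1\otimes\mathcal{O}_T\big)$, so that $H^0(X_T,\mathcal{E}_T) = \Gamma(T, f_{T*}\mathcal{E}_T) = \ker\!\big(\Gamma(T,K^0_T) \to \Gamma(T,K^1_T)\big)$, the last equality by left-exactness of global sections. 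This identifies $\Gamma_{X/S}(\mathcal{E})$ with the functor $T \mapsto \ker(d^0_T)$ on global sections.

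Finally, shrinking $S$ so that $K^0 \cong \mathcal{O}_S^{\,m}$ and $K^1 \cong \mathcal{O}_S^{\,n}$ are free, the differential $d^0$ becomes an $n \times m$ matrix $M$ with entries in $A$, and the functor $T \mapsto \ker(M : \Gamma(T,\mathcal{O}_T)^m \to \Gamma(T,\mathcal{O}_T)^n)$ is represented by the closed subscheme $Z \subset \mathbb{A}^m_S$ defined by the $n$ linear equations $\sum_j M_{ij}\,t_j = 0$. Indeed $Z(T)$ is exactly the set of $m$-tuples in $\Gamma(T,\mathcal{O}_T)$ killed by $M$, matching the kernel above. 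The scheme $Z$ is relatively affine and manifestly of finite presentation over $S$. Gluing the local solutions across an affine cover (with cocycle data supplied by functoriality) recovers representability for general affine $S$, and hence for arbitrary $S$.

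I expect the only genuine subtlety to be the correct deployment of cohomology and base change, namely citing the existence of a complex $K^\bullet$ of finite locally free modules computing the cohomology of all base changes simultaneously, so that the identification of $H^0$ with a kernel is valid universally rather than only fiberwise. Once that complex is in hand, the representability of a kernel of a linear map and the finite-presentation descent to a Noetherian base are entirely formal.
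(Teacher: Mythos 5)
Your argument is correct, and it is essentially the same proof the paper relies on: the paper simply cites \cite[\href{https://stacks.math.columbia.edu/tag/08K6}{Tag 08K6}]{stacks-project} (with $\mathcal{G}=\mathcal{O}_X$, $\mathcal{F}=\mathcal{E}$), and the mechanism behind that citation is exactly your reduction to a bounded complex $K^\bullet$ of finite locally free modules computing cohomology universally, followed by representing $\ker(d^0)$ on global sections as a closed subscheme of the total space of $K^0$ --- the very same Grothendieck-complex device the paper uses explicitly in the proof of Lemma \ref{lemma: representability of vanishing locus}. The one point worth stating a little more carefully is the final identification: the subfunctor $T \mapsto \ker\bigl(\Gamma(T,K^0_T) \to \Gamma(T,K^1_T)\bigr)$ of $\mathbb{A}^m_S$ is cut out by the scheme-theoretic vanishing of the section $d^0$ of $\mathcal{H}om(K^0,K^1)$ pulled back along the tautological section, which is precisely your linear equations; this is a closed immersion of finite presentation, so no issue arises.
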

\begin{proof}
This follows from \cite[Thm. 2.3 (i)]{hall-rydh-hilbert-quot} and \cite[Thm. 1.3 (ii)]{hall-rydh-tannaka}. 
\end{proof}

\end{subsection}
\begin{subsection}{Change of group for $G$-bundles}
Let $\text{B}G$ denote the classifying stack of $G$. This is the pseudofunctor from $k$-schemes to groupoids given as follows. For any $k$-scheme $S$,
\[ \text{B}G \, (S) \; \vcentcolon = \; \left\{ \begin{matrix} \text{groupoid of  $G$-bundles $\mathcal{P}$}\;  \text{over $S$} \end{matrix} \right\}  \]
We have that $\text{B}G$ is an algebraic stack \cite[\href{https://stacks.math.columbia.edu/tag/06PL}{Tag 06PL}]{stacks-project}, and so in particular it is a stack for the fppf topology. Let $\text{Bun}_{G}(C)$ denote the moduli stack of $G$-bundles on $C$. This is defined by $\text{Bun}_{G}(C) \, (S) \vcentcolon = \text{B}G (C \times S)$ for any $k$-scheme $S$. Since $\text{B}G$ is a stack in the fppf topology, it directly follows from the definition that $\text{Bun}_{G}(C)$ also satisfies fppf descent.

\begin{prop}(\cite{behrend-thesis}[Prop. 4.4.4]). \label{prop: quasicompactness associated bundle map}
Suppose that $G$ is a connected reductive group over $k$. 
\begin{enumerate}[(i)]
    \item Let $\rho: G \longrightarrow \text{GL}_n$ be a faithful representation. Let $\rho_{*}: \text{Bun}_G(C) \longrightarrow \text{Bun}_{\text{GL}_n}(C)$ be the morphism induced by extension of structure groups. Then, $\rho_{*}$ is schematic, affine and of finite type.
    \item $\text{Bun}_{G}(C)$ is an algebraic stack locally of finite type over $k$.
\end{enumerate}
\end{prop}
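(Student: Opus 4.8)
The plan is to deduce (ii) formally from (i) together with the classical fact that $\text{Bun}_{\text{GL}_n}(C)$ is an algebraic stack locally of finite type over $k$. Once $\rho_{*}$ is known to be schematic, affine and of finite type, an atlas for $\text{Bun}_G(C)$ is obtained by pulling back an atlas of $\text{Bun}_{\text{GL}_n}(C)$ along $\rho_{*}$; representability by schemes and the finite-type property are stable under this base change, so $\text{Bun}_G(C)$ is algebraic and locally of finite type. Hence all the content is in (i), which I would prove by reinterpreting the fibers of $\rho_{*}$ as spaces of sections. Since $\rho$ is faithful and $G$ is reductive, $\rho$ realizes $G$ as a closed subgroup of $\text{GL}_n$. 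Fix an $S$-point of $\text{Bun}_{\text{GL}_n}(C)$, i.e. a $\text{GL}_n$-bundle $E$ on $C \times S$. The fiber product $\text{Bun}_G(C) \times_{\text{Bun}_{\text{GL}_n}(C)} S$ is the functor sending an $S$-scheme $T$ to the set of reductions of structure group of $E|_{C \times T}$ from $\text{GL}_n$ to $G$. By descent for torsors, such reductions are in natural bijection with sections over $C \times T$ of the associated fiber bundle $E/G \vcentcolon = E \times^{\text{GL}_n} (\text{GL}_n/G) \to C \times S$, whose fibers are copies of $\text{GL}_n/G$. So I must show that the functor of sections of $E/G \to C \times S$ relative to $S$ is represented by an affine $S$-scheme of finite type.

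The key geometric input is Matsushima's criterion: because $G$ is reductive, the quotient $\text{GL}_n/G$ is an \emph{affine} variety. As an affine variety carrying a $\text{GL}_n$-action, it admits a $\text{GL}_n$-equivariant closed immersion $\text{GL}_n/G \hookrightarrow V$ into a finite-dimensional $\text{GL}_n$-representation $V$ (its coordinate ring is a locally finite rational $\text{GL}_n$-module, so a finite generating set lies in a finite-dimensional submodule). Applying the associated-bundle construction to this embedding yields a closed immersion $E/G \hookrightarrow E \times^{\text{GL}_n} V$ of schemes over $C \times S$, where $\mathcal{V} \vcentcolon = E \times^{\text{GL}_n} V$ is a genuine vector bundle on $C \times S$ whose total space $\mathbb{V}(\mathcal{V})$ contains $E/G$ as a closed subscheme. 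Consequently the functor of sections of $E/G$ is a subfunctor of the functor of sections of $\mathbb{V}(\mathcal{V})$, which is exactly $\Gamma_{(C\times S)/S}(\mathcal{V})$. Since $C \times S \to S$ is proper, flat and of finite presentation and $\mathcal{V}$ is a vector bundle, Lemma \ref{lemma: representability of sections} represents $\Gamma_{(C\times S)/S}(\mathcal{V})$ by a scheme affine and of finite presentation over $S$.

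It remains to check that the condition "the section factors through $E/G$" carves out a closed subscheme of $\Gamma \vcentcolon = \Gamma_{(C\times S)/S}(\mathcal{V})$. Writing $\mathcal{I}$ for the ideal of the closed subscheme $E/G \subset \mathbb{V}(\mathcal{V})$ and $s_{\mathrm{univ}}$ for the universal section of $\mathcal{V}$ on $C \times \Gamma$, a section factors through $E/G$ precisely when the pullback $s_{\mathrm{univ}}^{*}\mathcal{I}$ vanishes; this is a closed condition on $\Gamma$, representable by a closed subscheme since $C \times S \to S$ is proper, in the same spirit as Lemma \ref{lemma: representability of vanishing locus}. Therefore $\text{Bun}_G(C) \times_{\text{Bun}_{\text{GL}_n}(C)} S$ is a closed subscheme of an affine finite-type $S$-scheme, hence affine and of finite type over $S$, which shows that $\rho_{*}$ is schematic, affine and of finite type.

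The step I expect to be the main obstacle is the \emph{affineness} of the fibers. Everything downstream of the section-space reinterpretation — the equivariant embedding, Lemma \ref{lemma: representability of sections}, and the closedness of the factoring condition — would already deliver a schematic morphism of finite type for an arbitrary faithful representation of a general linear algebraic group, because $\text{GL}_n/G$ is always quasi-projective by Chevalley. It is exactly the reductivity of $G$, through Matsushima's theorem, that makes $\text{GL}_n/G$ affine and thereby upgrades the conclusion from "schematic and finite type" to "affine"; without reductivity $\text{GL}_n/G$ is merely quasi-affine and this final upgrade fails.
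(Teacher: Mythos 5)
Your proposal follows essentially the same route as the paper: reinterpret the fiber of $\rho_{*}$ as a space of sections of $\mathcal{P}\times^{\text{GL}_n}(\text{GL}_n/G)$, use Matsushima to embed $\text{GL}_n/G$ equivariantly and closedly into a finite-dimensional representation, apply Lemma \ref{lemma: representability of sections} to the resulting vector bundle, and cut out the factoring condition as a closed subscheme. The only step you leave compressed is the last one, where the paper twists the ideal sheaf of $E/G$ to make it globally generated so that the factoring condition becomes the vanishing of a section of a genuine vector bundle, to which Lemma \ref{lemma: representability of vanishing locus} applies; your sketch is otherwise correct.
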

\begin{proof}
\quad \newline
\noindent (i) Let $S$ be a $k$-scheme. Let $\mathcal{P}: S \longrightarrow \text{Bun}_{\text{GL}_n}(C)$ be a $\text{GL}_n$-bundle over $C \times S$. We want to show that $\text{Bun}_G(C) \times_{\text{Bun}_{\text{GL}_n}(C)} S\to S$ is represented by a scheme that is relatively affine and of finite type over $S$.
	
Matsushima's criterion \cite{richardson-affinehomegeneous} implies that the homogeneous space $\text{GL}_n \, / \, G$ is affine. Form the associated fiber bundle $X \vcentcolon = \mathcal{P} \times^{\text{GL}_n} \left(\text{GL}_{n \, C \times S} \, / \, G_{C \times S}\right)$, which is affine and of finite presentation over $S$ by \'etale descent. For any $S$-scheme $f: T \longrightarrow S$, we have
\begin{align*}
    (\text{Bun}_G(C) \times_{\text{Bun}_{\text{GL}_n}(C)} S)(T) & = \left\{ \; \text{sections of the morphism} \; X \times_S T \longrightarrow C \times T \; \right\}\\
    & = \Gamma_{C\times S/S}(X)
\end{align*}
By \Cref{lemma: representability of sections}, this is represented by a relatively affine scheme of finite type over $S$.
\medskip

\noindent (ii) By part $(i)$, it suffices to show that $\text{Bun}_{\text{GL}_{n}}(C)$ is an algebraic stack that is locally of finite type over $k$. The stack $\text{Bun}_{\text{GL}_n}(C)$ classifies rank $n$ vector bundles on $C$. This is an algebraic stack locally of finite type over $k$, see \cite{neumann-vectorbundles}[Thm. 2.57], \cite{heinloth-vectorbundles}[Example 1.14] or \cite{lmb-champsalgebriques}[4.6.2.1].
\end{proof}

We now return to the general case when $G$ is a smooth connected linear algebraic group. Let $U$ denote the unipotent radical of $G$. Recall that this is the biggest smooth connected normal unipotent subgroup of $G$. The quotient $G/U$ is a reductive linear algebraic group. Let $\rho_{G}: G \rightarrow G/U$ denote the quotient morphism.
\begin{prop} \label{prop: quasicompactness associated bundle morphism unipotent}
Suppose that the characteristic of $k$ is $0$. Let $(\rho_{G})_*: \text{Bun}_{G}(C) \rightarrow \text{Bun}_{G/U}(C)$ denote the morphism induced by extension of structure groups.
\begin{enumerate}[(i)]
    \item For all $k$-schemes $T$ and morphisms $T \rightarrow \text{Bun}_{G/U}(C)$, the fiber product $\text{Bun}_{G}(C) \times_{\text{Bun}_{G/U}(C)} T$ is an algebraic stack of finite type over $T$.
    
    \item $\text{Bun}_{G}(C)$ is an algebraic stack locally of finite type over $k$.
    
    \item The morphism of algebraic stacks $(\rho_{G})_*$ is of finite type.
    
    \item If the group $G$ is unipotent, then $\text{Bun}_{G}(C)$ is an algebraic stack of finite type over $k$.
\end{enumerate}
\end{prop}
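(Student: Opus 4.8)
The plan is to prove all four statements at once by analyzing the morphism $(\rho_G)_*$ through the internal structure of the unipotent radical $U$. The key structural input, available because $\operatorname{char} k = 0$, is that $U$ admits a filtration
\[ U = U_0 \supseteq U_1 \supseteq \cdots \supseteq U_m = \{e\} \]
by closed subgroups that are normal in $G$ (for instance the lower central series, whose terms are characteristic in $U$ and hence normal in the normal subgroup $U \trianglelefteq G$), such that each successive quotient $V_i \vcentcolon = U_i/U_{i+1}$ is a vector group $\mathbb{G}_a^{\,n_i}$. Since $V_i$ is central in $U/U_{i+1}$, the conjugation action of $G$ on $V_i$ is trivial on $U$ and therefore factors through a linear representation of $G/U$. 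Passing to quotients yields a tower of groups $G = G/U_m \to \cdots \to G/U_0 = G/U$, in which each arrow $G/U_{i+1} \to G/U_i$ is surjective with abelian kernel $V_i$, and correspondingly a tower of stacks $\text{Bun}_G(C) \to \cdots \to \text{Bun}_{G/U}(C)$ refining $(\rho_G)_*$.

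Since finite-type morphisms compose and the assertions of $(i)$--$(iii)$ may be checked smooth-locally, it suffices to analyze a single step $\text{Bun}_{G/U_{i+1}}(C) \to \text{Bun}_{G/U_i}(C)$ after base change along an arbitrary scheme $T \to \text{Bun}_{G/U_i}(C)$; the general statements then follow by induction up the tower, starting from the reductive base case $\text{Bun}_{G/U}(C)$ furnished by Proposition \ref{prop: quasicompactness associated bundle map}$(ii)$. Fixing such a $T$ classifying a $G/U_i$-bundle $\mathcal{Q}$ on $C \times T$, I would form the associated vector bundle $\mathcal{V} \vcentcolon = \mathcal{Q} \times^{G/U_i} V_i$ on $C \times T$. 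The fiber of the step over $\mathcal{Q}$ is the stack of lifts of $\mathcal{Q}$ along the extension $1 \to V_i \to G/U_{i+1} \to G/U_i \to 1$, and by the deformation theory of torsors under a group extension with abelian kernel this stack is governed by the cohomology of $\mathcal{V}$ relative to $\pi \colon C \times T \to T$: the obstruction to a lift lies in the relative $H^2$, the isomorphism classes of lifts form a torsor under the relative $H^1$, and the automorphisms of a lift are the relative $H^0$.

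The decisive simplification is that $C$ is a curve, so $H^{\geq 2}(C \times T', \mathcal{V}) = 0$ for every affine $T'$: the obstruction vanishes and lifts exist fppf-locally on $T$. Working Zariski-locally on $T$, the complex $R\pi_*\mathcal{V}$ has amplitude in $[0,1]$ and is represented, compatibly with base change, by a two-term complex $[F^0 \xrightarrow{d} F^1]$ of vector bundles on $T$ (the truncated Grothendieck complex used in Lemmas \ref{lemma: representability of sections} and \ref{lemma: representability of vanishing locus}). The fiber is then a torsor under the Picard stack $[F^1/F^0]$ of $\mathcal{V}$-torsors, where $F^0$ acts on $F^1$ by translation through $d$; its isomorphism classes recover $\operatorname{coker} d = R^1\pi_*\mathcal{V}$ and its automorphism groups recover $\ker d = \pi_*\mathcal{V}$. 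As $F^0, F^1$ are vector bundles over $T$, this is an algebraic stack of finite type over $T$, which gives $(i)$. I expect the identification of the fiber with this cohomological quotient — tracking the twist by $\mathcal{Q}$ and descending the vanishing obstruction along fppf covers — to be the main obstacle; the residual representability is routine given Lemma \ref{lemma: representability of sections}.

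The remaining parts are then formal. For $(iii)$, finite-type-ness of $(\rho_G)_*$ is checked after base change to schemes over $\text{Bun}_{G/U}(C)$, which is precisely $(i)$. For $(ii)$, since $\text{Bun}_{G/U}(C)$ is algebraic and locally of finite type and $(\rho_G)_*$ is representable by algebraic stacks and of finite type, the source $\text{Bun}_G(C)$ is again algebraic and locally of finite type. Finally, for $(iv)$, if $G$ is unipotent then $U = G$, so $G/U$ is trivial and $\text{Bun}_{G/U}(C) = \operatorname{Spec} k$; applying $(i)$ to the identity $\operatorname{Spec} k \to \text{Bun}_{G/U}(C)$ exhibits $\text{Bun}_G(C)$ itself as an algebraic stack of finite type over $k$.
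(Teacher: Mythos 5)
Your proposal is correct and follows essentially the same route as the paper: reduce along a central filtration of $U$ by $G$-normal subgroups with vector-group quotients (the paper peels off the neutral component of the center of $U$ at each step rather than using the lower central series, but this is cosmetic), then identify each fiber via the obstruction theory for lifting torsors along an extension with abelian kernel (the paper cites Hoffmann's Prop.~3.1), kill the obstruction using $H^2(C_{T'},\mathcal{V})=0$ for $T'$ affine, and present the resulting stack of lifts as $[F^1/F^0]$ via the two-term Grothendieck complex. Parts (ii)--(iv) are then deduced formally exactly as in the paper.
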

\begin{proof}
\quad \newline
\noindent (i) We will induct on the length $l(U)$ of the nilpotent series for the unipotent group $U$. The base case is $l(U)=0$. Then $U$ is trivial and so $(\rho_{G})_*$ is the identity. Hence $(i)$ holds trivially for the base case.
    
    Suppose that (i) holds whenever $l(U) \leq n$. Assume that $l(U) = n+1$. Let $Z_{U}$ denote the neutral component of the center of $U$. The algebraic group $Z_{U}$ is a smooth connected normal subgroup of $G$. Let $\rho_{z}$ denote the quotient morphism $\rho_{z}: G \rightarrow G/Z_{U}$. The unipotent radical of $G/Z_{G}(U)$ is $U/Z_{G}(U)$. The morphism $(\rho_{G})_{*}$ factors as
    \[ (\rho_{G})_{*} : \text{Bun}_{G}(C) \xrightarrow{(\rho_{z})_{*}} \text{Bun}_{G/Z_{U}}(C) \xrightarrow{(\rho_{G/Z_{U}})_{*}} \text{Bun}_{G/U}(C)\]
    Note that $l(U/Z_{U}) = n$ by construction. By the induction hypothesis, (i) holds for the morphism $(\rho_{G/Z_{U}})_*$. Therefore it suffices to show that the morphism $(\rho_{z})_*$ satisfies (i).
    
    Let $T$ be a $k$-scheme. Choose a morphism $T \rightarrow \text{Bun}_{G/Z_{U}}(C)$ represented by a $G/Z_{U}$-bundle $\mathcal{P}$ on $C_{T}$. We want to show that the fiber product $\text{Bun}_{G}(C) \times_{\text{Bun}_{G/Z_{U}}(C)} T$ is an algebraic stack of finite type over $T$. After passing to a Zariski cover of $T$, we can assume that $T$ is affine. 
    
    Let $(Z_{U})^{\mathcal{P}}$ denote the unipotent commutative group scheme over $C_{T}$ obtained by twisting $Z_{U}$ by $\mathcal{P}$ via the natural conjugation action of $G/Z_{U}$ on $Z_{U}$. Since the characteristic of $k$ is $0$, the commutative unipotent group $Z_{U}$ is isomorphic to a product of additive groups $(\mathbb{G}_a)^r$. Furthermore $G/Z_{U}$ acts by linear automorphisms on $Z_{U} \cong (\mathbb{G}_a)^r$. By \'etale descent for vector bundles, it follows that the twisted group $(Z_{U})^{\mathcal{P}}$ is the total space of a vector bundle $\mathcal{V}$ on $C_{T}$.
    
    By \cite{hoffmann_Gbundles}[Prop. 3.1], there is an obstruction in $H^2_{fppf}(C_{T}, \mathcal{V})$ for lifting the $G/Z_{U}$-bundle $\mathcal{P}$ to a $G$-bundle. Since $\mathcal{V}$ is a vector bundle, the fppf cohomology group $H^2_{fppf}(C_{T}, \mathcal{V})$ is the same as the usual sheaf cohomology group $H^2(C_{T}, \mathcal{V})$ in the small Zariski site (cf. \cite{hoffmann_Gbundles}[Remark 3.3]). Since the projective morphism $C_{T} \rightarrow T$ has relative dimension $1$ and $T$ is affine, the Leray spectral sequence implies that $H^2(C_{T}, \mathcal{V}) = 0$. Now \cite{hoffmann_Gbundles}[Prop. 3.1] shows that the fiber product $\text{Bun}_{G}(C) \times_{\text{Bun}_{G/Z_{U}}(C)} T$ is isomorphic to the stack $\text{Bun}_{(Z_{U})^{\mathcal{P}}}(C_{T})$ classifying $(Z_{U})^{\mathcal{P}}$-bundles on $C_{T}$. We are reduced to showing that $\text{Bun}_{(Z_{U})^{\mathcal{P}}}(C_{T})$ is an algebraic stack of finite type over $T$. 
    
    Let $F^{\bullet}$ be the Grothendieck complex of $\mathcal{V}$ with respect to the morphism $C_{T} \longrightarrow T$ \cite[\href{https://stacks.math.columbia.edu/tag/0B91}{Tag 0B91}]{stacks-project}. Since the relative dimension of the projective morphism $C_{T} \rightarrow T$ is $1$, we can choose $F^{\bullet}$ to be a two term complex $\left[ F^0 \rightarrow F^1 \right]$, where $F^0$ and $F^1$ are finite free modules on the affine scheme $T$. We will also denote by $F^0$ and $F^1$ the corresponding (constant) vector group schemes over $T$. Note that $F^0$ acts on $F^1$ via the differential morphism $F^0 \rightarrow F^1$. A similar reasoning as in \cite{hoffmann_Gbundles}[Remark 2.2] shows that $\text{Bun}_{(Z_{U})^{\mathcal{P}}}(C_{T})$ is isomorphic to the quotient stack $\left[ F^1 / \, F^0\right]$. This is an algebraic stack of finite type over $T$, as desired.
    \medskip
    
    \noindent (ii) In order to show that $\text{Bun}_{G}(C)$ is an algebraic stack locally of finite presentation over $k$, we are allowed to check after passing to a smooth cover of the fppf stack $\text{Bun}_{G}(C)$. Indeed, the fact that algebraicity can be checked smooth locally follows from \cite[\href{https://stacks.math.columbia.edu/tag/06DC}{Tag 06DC}]{stacks-project} applied to an atlas of the source of the covering, and being locally of finite presentation can be checked smooth locally by \cite[\href{https://stacks.math.columbia.edu/tag/06Q9}{Tag 06Q9}]{stacks-project}. By Proposition \ref{prop: quasicompactness associated bundle map}[(ii)], $\text{Bun}_{G/U}(C)$ is an algebraic stack locally of finite type over $k$. This implies that $\text{Bun}_{G/U}(C)$ admits an atlas $A \rightarrow \text{Bun}_{G/U}(C)$ such that $A$ is locally of finite type over $k$. Consider the fiber product:
    \begin{figure}[H]
\centering
\begin{tikzcd}
     A \times_{\text{Bun}_{G/U}(C)} \text{Bun}_{G}(C) \ar[d] \ar[r]  &  \text{Bun}_{G}(C) \ar[d] \\
     A \ar[r] & \text{Bun}_{G/U}(C)
\end{tikzcd}
\end{figure}
\vspace{-0.5cm}
The morphism $A \times_{\text{Bun}_{G/U}(C)} \text{Bun}_{G}(C) \rightarrow \text{Bun}_{G}(C)$ is a smooth cover of $\text{Bun}_{G}(C)$. By part (i), $A \times_{\text{Bun}_{G/U}(C)} \text{Bun}_{G}(C)$ is an algebraic stack of finite type over $A$. Since $A$ is locally of finite type over $k$, this implies that $A \times_{\text{Bun}_{G/U}(C)} \text{Bun}_{G}(C) \rightarrow \text{Bun}_{G}(C)$ is locally of finite type over $k$, as desired.
\medskip

\noindent (iii) This is just a restatement of (i).
\medskip

\noindent (iv) If $G$ is unipotent, then $G/U$ is the trivial group. Therefore $\text{Bun}_{G/U}(C) = Spec(k)$, and $(\rho_{G})_*$ is the structure morphism of $\text{Bun}_{G}(C)$. By part (iii), it follows that $\text{Bun}_{G}(C)$ is of finite type over $k$.
\end{proof}
\end{subsection}
\end{section}
\begin{section}{The moduli stack of flat $G$-connections}
\begin{subsection}{Flat $G$-connections}
	There is a canonical left-invariant $\mathfrak{g}$-valued $1$-form on $G$ called the Maurer-Cartan form. It is defined as follows. Left translation induces an isomorphism $T_{G} \cong \mathfrak{g} \otimes \mathcal{O}_{G}$ for the tangent sheaf $T_G$ of $G$. This yields a chain of isomorphisms  $\Omega^{1}_{G / k} \otimes \mathfrak{g} = \text{Hom}_{\mathcal{O}_{G}} ( T_{G}, \mathcal{O}_{G}) \otimes \mathfrak{g} \cong \text{Hom}_{k}(\mathfrak{g}, \mathfrak{g}) \otimes \mathcal{O}_{G}$. The invariant $\mathfrak{g}$-valued 1-form on $G$ that corresponds to $\text{id}_{\mathfrak{g}} \otimes 1$ under this isomorphism is the Maurer-Cartan form. We denote it by $\omega \in   H^0\left(G, \, \Omega_{G / k}^{1} \otimes \mathfrak{g}\right)$. 
	
	Let $S$ be a $k$-scheme. The same construction can be applied to define the relative Maurer-Cartan form $\omega_S \in H^0\left(G_S, \,  \Omega^1_{G\times S/ S}\otimes \mathfrak{g} \right) $ for the group scheme $G_S$ over the base $S$. There exists a group scheme 
	\[\text{Aff}\left(\Omega^1_{C\times S/ S}\otimes \mathfrak{g} \right) = \text{GL}\left(\Omega^1_{C\times S/ S}\otimes \mathfrak{g} \right) \ltimes \left(\Omega^1_{C\times S/ S}\otimes \mathfrak{g}\right)\]
	over $C \times S$ that classifies affine transformations of the locally free sheaf $\Omega^1_{C\times S/ S}\otimes \mathfrak{g}$. We consider the restriction of this group scheme to the small \'etale site of the scheme $C \times S$, which is a sheaf that we shall also denote by $\text{Aff}\left(\Omega^1_{C\times S/ S}\otimes \mathfrak{g} \right)$. The Maurer-Cartan form can be used to define a homomorphism $\phi: G_{C \times S} \longrightarrow \text{Aff}\left(\Omega^1_{C\times S/ S}\otimes \mathfrak{g} \right)$ of sheaves of groups in the small \'etale site of $C \times S$ as follows. Given an \'etale $C\times S$-scheme $U$ and an element $g \in G_S(U)$, we can pull back the relative Maurer-Cartan form in order to obtain $g^{*}\omega_S \in  H^0\left(U,  \, \Omega_{U / S}^{1}  \otimes  \mathfrak{g}  \right)$. We define $\phi(U) \, (g) \vcentcolon = Id_{\Omega^1_{U/S}} \underset{\mathcal{O}_{U}}{\otimes} Ad(g) \, + \, (g^{-1})^{*} \omega_S$. Notice that it is important here that $U \to C \times S$ is \'etale, in order to identify $\Omega^1_{U/S}$ with the pullback of $\Omega^1_{C \times S/ S}$. The fact that this is a homomorphism follows from the left-invariance of the Maurer-Cartan form.
	
	Let $\mathcal{P}$ be a $G$-bundle on $C \times S$. Suppose that $X$ is a relatively affine $C \times S$-scheme equipped with an action of $G_{C \times S}$. The group scheme $G_{C \times S}$ acts diagonally on the product $\mathcal{P} \times_{C \times S} X$. We define the associated bundle $\mathcal{P} \times^{G} X$ to be the quotient $(\mathcal{P} \times_{C \times S} X )\, / \, G_{C \times S}$. \'Etale descent for affine morphisms \cite[\href{https://stacks.math.columbia.edu/tag/0245}{Tag 0245}]{stacks-project} implies that $\mathcal{P} \times^{G} X$ is represented by a scheme. One such example is the vector bundle $\mathcal{P} \times^{G} \mathfrak{g}_{C \times S}$ associated to the adjoint representation $Ad: G_{C \times S} \longrightarrow \text{GL}(\mathfrak{g}_{C \times S})$. This is called the adjoint bundle of $\mathcal{P}$, and will henceforth be denoted by $Ad\, \mathcal{P}$.
	
	We now recall the definition of $G$-connection on $\mathcal{P}$ relative to $S$. The homomorphism $\phi$ induces an action of $G_{C \times S}$ on the total space of $\Omega^1_{C\times S/ S}\otimes \mathfrak{g}$ by affine linear transformations (at least for points in the small \'etale site of $C \times S$). We can use $\phi$ to form the associated affine bundle $\mathcal{P} \times^G \left( \Omega^1_{C\times S/ S}\otimes \mathfrak{g} \right)$. This can be constructed by choosing an \'etale trivialization of $\mathcal{P}$, and therefore we only need to know $\phi$ on the small \'etale site of $C \times S$ in order to describe the descent data. A $G$-connection on $\mathcal{P}$ relative to $S$ is defined to be a section of the affine morphism $\mathcal{P} \times^G \left( \Omega^1_{C\times S/ S}\otimes \mathfrak{g} \right) \to C \times S$. By construction the bundle of $G$-connections $\mathcal{P} \times^G \left( \Omega^1_{C\times S/ S}\otimes \mathfrak{g} \right)$ is an \'etale torsor for the $C \times S$-sheaf $Ad \,  \mathcal{P} \otimes \Omega^1_{C \times S/ S}$, which is viewed as an abelian sheaf under addition. This torsor represents a cohomology class $\gamma_{\mathcal{P}} \in H^1\left( C \times S, \, Ad \,  \mathcal{P} \otimes \Omega^1_{C \times S/ S}\right)$ called the Atiyah class of $\mathcal{P}$ relative to $S$. Here $H^1\left( C \times S, \, Ad \,  \mathcal{P} \otimes \Omega^1_{C \times S/ S}\right)$ is the ordinary sheaf cohomology group in the Zariski site. It coincides with the corresponding \'etale cohomology group because $Ad \,  \mathcal{P} \otimes \Omega^1_{C \times S/ S}$ is quasicoherent \cite[\href{https://stacks.math.columbia.edu/tag/03P2}{Tag 03P2}]{stacks-project}.
	
	We can think of the Atiyah class $\gamma_{\mathcal{P}} \in H^1\left(C \times S, \, Ad \, \mathcal{P} \otimes \Omega^1_{C \times S / S}\right)$ as an element of $\text{Ext}^1\left( \mathcal{O}_{C \times S}, \,\text{Ad} \, \mathcal{P} \otimes \Omega^1_{C \times S/S}\right)$. It corresponds to an extension of $\mathcal{O}_{C \times S}$-modules
	\[ 0 \longrightarrow Ad \, \mathcal{P} \otimes \Omega^1_{C \times S / S} \longrightarrow At(\mathcal{P}) \longrightarrow \mathcal{O}_{C \times S} \longrightarrow 0  \]
	We call $At(\mathcal{P})$ the Atiyah bundle. The short exact sequence above will be referred to as the Atiyah sequence. By definition a $G$-connection on the $G$-bundle $\mathcal{P}$ is the same as a $\mathcal{O}_{C \times S}$-linear splitting of the Atiyah sequence. Such splitting exists if and only if the Atiyah class $\gamma_{\mathcal{P}}$ vanishes.
	\begin{remark}
		In the literature (e.g. \cite{biswas-atiyahbundle}) the Atiyah sequence differs from the one we have defined. It is usually the twist of the sequence above by $\left(\Omega^1_{C \times S/ S}\right)^{\vee}$.
	\end{remark} 

	\begin{example} \label{example: atiyah class line bundle}
	Suppose that $S = \text{Spec} \, k$ and $G = \mathbb{G}_m$. A $\mathbb{G}_m$-torsor is the same thing as a line bundle on $C$. Let $\mathcal{L}$ be such a line bundle. By definition  $\gamma_{\mathcal{L}} \in H^{1}\left( C, \, Ad \, \mathcal{L} \otimes \Omega_{C / k}^{1} \right)$. Since $\mathbb{G}_m$ is abelian, there is a canonical identification $Ad \, \mathcal{L} \cong \mathcal{O}_C \otimes \mathfrak{gl}_1$. Hence $\gamma_{\mathcal{L}} \in H^{1}\left( C, \, \Omega_{C / k}^{1} \otimes \mathfrak{gl}_1 \right)$. Serre duality yields an isomorphism $H^1\left(C , \, \Omega_{C / k}^{1} \otimes \mathfrak{gl}_1 \right) \cong \left(H^0\left(C, \,\mathcal{O}_C\right) \otimes \mathfrak{gl}_1 \right)^{\vee}$.
	
	Since $C$ is geometrically irreducible and proper, there is a canonical isomorphism of $k$-algebras $H^0\left( C, \, \mathcal{O}_C \right) \cong k$. Under this identification we have $\gamma_{\mathcal{L}} \in \mathfrak{gl}_1^{\vee}$. Finally, there is a canonical isomorphism $\mathfrak{gl}_1 \cong k$ obtained via the natural inclusion $\mathbb{G}_m \hookrightarrow \mathbb{A}^{1}_{k}$. So we can view $\gamma_{\mathcal{L}}$ as an element of  $k$.
	\end{example}
	\begin{lemma} \label{lemma: atiyah class line bundle}
	Let $\mathcal{L}$ be as in Example \ref{example: atiyah class line bundle}. Under the identification above, we have $\gamma_{\mathcal{L}} = -\text{deg} \,  \mathcal{L}$.
	\end{lemma}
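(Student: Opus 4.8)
The plan is to compute $\gamma_{\mathcal{L}}$ explicitly as a \v{C}ech $1$-cocycle with respect to a trivializing cover, using the torsor structure coming from the map $\phi$, and then to match the resulting class against the degree under the Serre-duality identification of Example \ref{example: atiyah class line bundle}. First I would choose a Zariski cover $\{U_\alpha\}$ of $C$ over which $\mathcal{L}$ is trivial, with transition units $g_{\alpha\beta} \in \mathcal{O}^\times(U_\alpha \cap U_\beta)$. A trivialization of the $\mathbb{G}_m$-bundle over each $U_\alpha$ induces a local section of the connection bundle $\mathcal{P} \times^{\mathbb{G}_m}(\Omega^1_{C/k}\otimes\mathfrak{gl}_1)$, and on overlaps two such local sections differ by the translation part of the transition map $\phi(g_{\alpha\beta})$. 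Since $\mathbb{G}_m$ is abelian, $Ad(g_{\alpha\beta}) = \mathrm{id}$, so that translation part is exactly $(g_{\alpha\beta}^{-1})^{*}\omega$; hence $\gamma_{\mathcal{L}}$ is represented by the cocycle $\{(g_{\alpha\beta}^{-1})^{*}\omega\}_{\alpha\beta}$ in $H^1(C,\Omega^1_{C/k}\otimes\mathfrak{gl}_1)$.

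Next I would compute the Maurer--Cartan form on $\mathbb{G}_m$. Writing $t$ for the coordinate and using left-invariance, $\omega = t^{-1}\,dt$ valued in $\mathfrak{gl}_1 \cong k$. For a unit $g$ one then has $(g^{-1})^{*}\omega = d(g^{-1})/g^{-1} = -\,dg/g = -\,d\log g$, so the cocycle above equals $\{-\,d\log g_{\alpha\beta}\}$. Under $\mathfrak{gl}_1 \cong k$ this gives $\gamma_{\mathcal{L}} = -[\{d\log g_{\alpha\beta}\}]$, where $[\{d\log g_{\alpha\beta}\}] \in H^1(C,\Omega^1_{C/k})$ is the algebraic first Chern class of $\mathcal{L}$.

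It then remains to identify $[\{d\log g_{\alpha\beta}\}]$ with $\deg\mathcal{L}$ under the Serre-duality trace isomorphism $H^1(C,\Omega^1_{C/k}) \cong H^0(C,\mathcal{O}_C)^{\vee} \cong k$ used in Example \ref{example: atiyah class line bundle}. The cleanest self-contained route is to pick a nonzero rational section $s$ of $\mathcal{L}$ with divisor $\sum_P n_P\,P$, so that $\deg\mathcal{L} = \sum_P n_P$. Writing $s = s_\alpha e_\alpha$ in the local frames gives $s_\alpha/s_\beta = g_{\alpha\beta}$, hence $d\log s_\alpha - d\log s_\beta = d\log g_{\alpha\beta}$, exhibiting $\{d\log g_{\alpha\beta}\}$ as the coboundary of the rational $0$-cochain $\{d\log s_\alpha\}$, whose only poles are at the $P$ with $\mathrm{Res}_P(d\log s_\alpha) = n_P$. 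The residue theorem together with the normalization of the trace map then yields that the Chern class pairs to $\sum_P n_P = \deg\mathcal{L}$. Combined with the sign from the previous step, this gives $\gamma_{\mathcal{L}} = -\deg\mathcal{L}$, as claimed.

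The main obstacle is sign-bookkeeping: the conclusion is an equality of numbers, so every convention is load-bearing. One must track the placement of the inverse in $\phi(g) = \mathrm{id}_{\Omega^1}\otimes Ad(g) + (g^{-1})^{*}\omega$, the direction of the transition cocycle ($g_{\alpha\beta}$ versus $g_{\beta\alpha}$), and the precise normalization of the Serre-duality trace in Example \ref{example: atiyah class line bundle}. One also has to justify rigorously that the class of the connection torsor is genuinely computed by these \v{C}ech data, i.e.\ that the local trivializations induced from trivializations of $\mathcal{P}$ differ on overlaps exactly by the translation part of $\phi(g_{\alpha\beta})$. As a sanity check I would use that multiplication $\mathbb{G}_m \times \mathbb{G}_m \to \mathbb{G}_m$ makes $\mathcal{L} \mapsto \gamma_{\mathcal{L}}$ a homomorphism $\mathrm{Pic}(C) \to (k,+)$, which is consistent with $-\deg$, and verify the value on $\mathcal{O}_C(P)$ for a single rational point $P$, where the whole computation collapses to a single residue equal to $1$.
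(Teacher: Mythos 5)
Your proposal is correct and takes essentially the same approach as the paper: the paper's proof of this lemma is a one-line reference to the \v{C}ech cohomology computation of Atiyah's paper (Section 3), and your argument is precisely that computation carried out in detail --- representing the connection torsor by the cocycle $\{-d\log g_{\alpha\beta}\}$ via the translation part of $\phi(g_{\alpha\beta})$, and pairing the resulting Chern class with the Serre-duality trace through residues. The sign and normalization issues you flag are indeed the only load-bearing points, and your sanity check on $\mathcal{O}_C(P)$ is the right way to pin them down.
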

	\begin{proof}
	This follows from a Cech cohomology computation similar to the one in \cite{atiyah-connections} Section 3.
	\end{proof}
	
Part (i) in the following proposition is the simpler of the two directions in a theorem of Weil. See \cite{biwas-atiyah-weil} Section 7 for an exposition. We provide a proof here for completeness.
\begin{lemma} \label{lemma: weil theorem}
	Suppose that the characteristic of $k$ is $0$. Let $\mathcal{E}$ be a vector bundle on $C$ equipped with a connection. 
	\begin{enumerate}[(i)]
	    \item Every direct summand $\mathcal{F}$ of $\mathcal{E}$ satisfies $\text{deg}\, \mathcal{F} =0$. In particular $\mathcal{E}$ itself has degree $0$.
	    \item If $\mathcal{F}$ is a subbundle of $\mathcal{E}$ that is stable under the connection, then $\text{deg} \, \mathcal{F} = 0$.
	\end{enumerate}
\end{lemma}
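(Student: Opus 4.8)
The plan is to reduce both statements to the case of a line bundle, where the degree is already pinned down by the Atiyah class via Lemma \ref{lemma: atiyah class line bundle}. The organizing observation, which I would isolate first, is that \emph{any} vector bundle $\mathcal{M}$ on $C$ admitting a connection has $\deg \mathcal{M} = 0$. Indeed, a connection $\nabla$ on $\mathcal{M}$ of rank $r$ induces a connection on the determinant line bundle $\det \mathcal{M} = \wedge^{r} \mathcal{M}$ by the standard formula $\nabla^{\det}(m_1 \wedge \cdots \wedge m_r) = \sum_j m_1 \wedge \cdots \wedge \nabla m_j \wedge \cdots \wedge m_r$, whose Leibniz property is routine. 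Thus $\det \mathcal{M}$ admits a connection, so its Atiyah class vanishes; but by Lemma \ref{lemma: atiyah class line bundle} this class equals $-\deg(\det \mathcal{M})$ as an element of $k$. Since $\mathrm{char}\, k = 0$, the map $\mathbb{Z} \to k$ is injective, and hence the integer $\deg(\det \mathcal{M}) = \deg \mathcal{M}$ is forced to be zero.

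With this reduction in hand, both parts amount to producing a connection on the relevant subsheaf. Part (ii) is immediate: if $\mathcal{F} \subseteq \mathcal{E}$ is stable under $\nabla$, meaning $\nabla(\mathcal{F}) \subseteq \mathcal{F} \otimes \Omega^1_{C/k}$, then the restriction $\nabla|_{\mathcal{F}}$ is a genuine connection on $\mathcal{F}$, and the observation above yields $\deg \mathcal{F} = 0$.

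For part (i) the essential point is that a direct summand need not be $\nabla$-stable, so one cannot simply restrict $\nabla$. Instead, writing $\mathcal{E} = \mathcal{F} \oplus \mathcal{F}'$ with inclusion $\iota : \mathcal{F} \hookrightarrow \mathcal{E}$ and projection $\pi : \mathcal{E} \twoheadrightarrow \mathcal{F}$ satisfying $\pi \circ \iota = \mathrm{id}_{\mathcal{F}}$, I would define $\nabla_{\mathcal{F}} := (\pi \otimes \mathrm{id}_{\Omega^1_{C/k}}) \circ \nabla \circ \iota$. A short computation using only $\pi \circ \iota = \mathrm{id}$ shows that $\nabla_{\mathcal{F}}$ satisfies the Leibniz rule, so it is a connection on $\mathcal{F}$; the reduction then gives $\deg \mathcal{F} = 0$. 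Taking $\mathcal{F} = \mathcal{E}$ recovers $\deg \mathcal{E} = 0$.

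The only genuinely nontrivial step — the substance that separates (i) from the easier (ii) — is recognizing that an arbitrary direct summand inherits a connection through the projection $\pi$, even though $\nabla$ does not preserve it; the off-diagonal components of $\nabla$ simply play no role. Everything else is bookkeeping for induced connections on determinants together with the line-bundle computation already supplied by Lemma \ref{lemma: atiyah class line bundle}, where the hypothesis $\mathrm{char}\, k = 0$ is exactly what upgrades the vanishing of $\deg$ in $k$ to vanishing in $\mathbb{Z}$.
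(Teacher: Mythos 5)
Your proposal is correct and follows essentially the same route as the paper: both induce a connection on a direct summand via $(\mathrm{pr}\otimes\mathrm{id})\circ\nabla\circ\iota$, pass to the determinant line bundle, and invoke Lemma \ref{lemma: atiyah class line bundle} together with $\mathrm{char}\,k=0$ to conclude the degree vanishes. The only cosmetic difference is that you write the determinant connection explicitly via the wedge formula where the paper appeals to functoriality along $\det:\mathrm{GL}_r\to\mathbb{G}_m$.
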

\begin{proof}
\quad \newline
\noindent (i) The connection on $\mathcal{E}$ corresponds to a morphism of abelian sheaves $\nabla: \mathcal{E} \rightarrow \mathcal{E} \otimes \Omega^1_{C/k}$ that satisfies the Leibniz rule. For any direct summand $\mathcal{F}$ of $\mathcal{E}$, we have that $\nabla$ also induces a connection on $\mathcal{F}$. Using the functoriality of connections for the determinant morphism $\text{det}: \text{GL}_{(\text{rank} \, \mathcal{F})} \rightarrow \mathbb{G}_m$, we conclude that the determinant bundle $\text{det}\, \mathcal{F}$ also admits a connection. This means that $\gamma_{\text{det}\, \mathcal{F}} =0$. By Lemma \ref{lemma: atiyah class line bundle}, this is equivalent to $-\text{deg}\, \mathcal{F} = 0$ when viewed as an element of $k$. Since the characteristic of $k$ is $0$, it follows that $\text{deg} \, \mathcal{F} = 0$.
\medskip

\noindent (ii) Suppose that $\mathcal{F} \subset \mathcal{E}$ is stable under the connection $\theta$. Then, we can restrict $\theta|_{\mathcal{F}}$ in order to obtain a connection defined on $\mathcal{F}$. Now part (i) shows that $\text{deg}\, \mathcal{F} = 0$.
\end{proof}
\end{subsection}
\begin{subsection}{Functoriality for flat $G$-connections} \label{subsection: functoriality regular connections}
Connections are contravariant with respect to morphisms of base schemes. To see why this is the case, let $f: T \longrightarrow S$ be a morphism of $k$-schemes. Let $\mathcal{P}$ be a $G$-bundle on $C \times S$. By definition the bundle of $G$-connections $(Id_C \times f)^{*} \mathcal{P} \times^G \left( \Omega^1_{C\times T/ T}\otimes \mathfrak{g} \right)$ is the pullback of the $Ad \,  \mathcal{P} \otimes \Omega^1_{C \times S/ S}$-torsor $\mathcal{P} \times^G \left( \Omega^1_{C\times S/ S}\otimes \mathfrak{g} \right)$. This means that the Atiyah class behaves well under pullbacks, meaning $\gamma_{(Id_C \times f)^{*} \mathcal{P}} = (Id_C \times f)^{*} \gamma_{\mathcal{P}}$. Here we write $(Id_C \times f)^{*}$ on the right-hand side to denote the natural map on sheaf cohomology groups 
\begin{gather*}
H^1\left(C \times S, \, Ad \, \mathcal{P} \otimes \Omega^1_{C \times S / S}\right) \, \longrightarrow \, H^1\left(C \times T, \,(Id_c \times f)^{*} \left( Ad \, \mathcal{P} \otimes \Omega^1_{C \times S / S} \right) \,\right) \, \xlongequal{\; \; } \, H^1\left(C \times T, \, Ad \, (Id_C \times f)^{*}\mathcal{P} \otimes \Omega^1_{C \times T / T}\right)
\end{gather*}
As a consequence, the Atiyah sequence for $(Id_C \times f)^{*} \mathcal{P}$ is the $(Id_C \times f)$-pullback of the Atiyah sequence for $\mathcal{P}$. We can therefore pullback connections by interpreting them as splittings of the Atiyah sequence. For a $G$-connection $\theta$ on $\mathcal{P}$, we denote by $f^{*}\theta$ the corresponding connection on $(Id_C \times f)^{*} \mathcal{P}$.
    
On the other hand, connections are covariant with respect to morphisms of structure groups. Fix a $k$-scheme $S$. Let $H$ be a linear algebraic group over $k$ with Lie algebra $\mathfrak{h}$. Let $\varphi: G \longrightarrow H$ be a homomorphism of algebraic groups. For any $G$-bundle $\mathcal{P}$ over $C \times S$, there exists an associated $H$-bundle $\varphi_{*} \mathcal{P} \vcentcolon = \mathcal{P} \times^G H$ obtained via extension of structure group. Observe that there is an induced morphism of tangent spaces at the identity $\text{Lie}(\varphi) : \mathfrak{g} \longrightarrow \mathfrak{h}$. By \'etale descent for morphisms of quasicoherent sheaves \cite[\href{https://stacks.math.columbia.edu/tag/023T}{Tag 023T}]{stacks-project}, this yields a vector bundle morphism $Ad \, \varphi: Ad\, \mathcal{P} \longrightarrow Ad \, \varphi_{*} \mathcal{P}$. This induces the following map of sheaf cohomology groups.
 \[\varphi^1_{*}: H^1\left( C \times S, \, Ad\, \mathcal{P} \otimes\Omega^1_{C\times S/ S}\right) \longrightarrow H^1\left( C \times S, \, Ad\, \varphi_{*} \mathcal{P} \otimes\Omega^1_{C\times S/ S}\right)\]
 By construction $\varphi_{*}^1 \left(\gamma_{\mathcal{P}} \right) = \gamma_{\varphi_{*} \mathcal{P}}$.

Recall that the functoriality of $\text{Ext}^1(\mathcal{O}_{C\times S}, -)$ can be described concretely in terms of pushouts \cite[\href{https://stacks.math.columbia.edu/tag/010I}{Tag 010I}]{stacks-project}. This description shows that there is a canonical commutative diagram of short exact sequences between the extension defined by $\gamma_{\mathcal{P}}$ and the extension defined by $\varphi_*^{1}(\gamma_{\mathcal{P}})$. Since $\varphi_{*}^1 \left(\gamma_{\mathcal{P}} \right) = \gamma_{\varphi_{*} \mathcal{P}}$, there is a commutative diagram of Atiyah sequences
\begin{figure}[H]
\centering
\begin{tikzcd}
    0  \ar[r]& \text{Ad} \, \mathcal{P} \otimes \Omega^1_{C \times S/S} \ar[d, "Ad \, \varphi \, \otimes \, id"] \ar[r] & At(\mathcal{P}) \ar[r] \ar[d, "At(\varphi)"]  & \mathcal{O}_{C \times S} \ar[r] \ar[d, symbol =  \xlongequal{}] & 0\\
    0 \ar[r] & \text{Ad} \, \varphi_{*} \mathcal{P} \otimes \Omega^1_{C \times S/S} \ar[r] & At(\varphi_{*}\mathcal{P}) \ar[r] & \mathcal{O}_{C \times S} \ar[r] & 0
\end{tikzcd}
\end{figure}
\vspace{-0.5cm}
We denote the middle vertical morphism by $\text{At}(\varphi)$. Let $\theta$ be a $G$-connection on $\mathcal{P}$, viewed as a splitting of the top row. We compose $\theta$ with $At(\varphi)$ in order to define a $H$-connection $\varphi_{*} \theta \vcentcolon = At(\varphi) \circ \theta$ on the $H$-bundle $\varphi_{*} \mathcal{P}$.
\end{subsection}

\begin{subsection}{The stack of flat $G$-connections}
The contravariant functoriality for connections with respect maps of base schemes implies that the $G$-connections form a pseudofunctor into groupoids. This allows to define the moduli stack of flat $G$-bundles over $C$.
\begin{defn}
The moduli stack $\text{Conn}_{G}(C)$ of flat $G$-bundles over $C$ is the pseudofunctor from $k$-schemes to groupoids defined as follows. For every $k$-scheme $S$, we define
\[\text{Conn}_{G}(C) \, (S) \vcentcolon = \; \left\{ \begin{matrix} \text{groupoid of $G$-torsors} \; \mathcal{P} \text{ over } C \times S \\ $+$\\ \text{ a $G$-connection $\theta$ on $\mathcal{P}$ relative to $S$} \end{matrix} \right\} \]
The isomorphisms are required to be compatible with the connections.
\end{defn}

\begin{remark}
We have chosen to use the language of pseudofunctors \cite[\href{https://stacks.math.columbia.edu/tag/003N}{Tag 003N}]{stacks-project} instead of the technically cleaner formulation of stacks in terms of categories fibered in groupoids \cite[\href{https://stacks.math.columbia.edu/tag/003S}{Tag 003S}]{stacks-project}. The reason for this is that we believe the definitions are more intuitive for our purposes in this paper. We note however that one can equivalently formulate everything in terms of fibered categories with a cleavage by \cite[Part 1. \S3.1.3]{fga-explained}.
\end{remark}

There is a morphism of pseudofunctors $Forget: \text{Conn}_{G}(C) \longrightarrow \text{Bun}_G(C)$ given by forgetting the connection.
\begin{prop} \label{prop: moduli of connections affine bundle}
The morphism $Forget: \text{Conn}_{G}(C) \longrightarrow \text{Bun}_G(C)$ is schematic, affine and of finite type. In particular $\text{Conn}_G(C)$ is an algebraic stack that is locally of finite type over $k$.
\end{prop}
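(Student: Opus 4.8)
The plan is to prove the statement by the same fiber-by-fiber strategy used for $\rho_*$ in Proposition \ref{prop: quasicompactness associated bundle map}: fix a test scheme $T$ with a map $T\to\text{Bun}_G(C)$, identify the fiber product with a concrete functor, and exhibit that functor as a closed subscheme of an affine finite-type scheme produced by Lemma \ref{lemma: representability of sections}. The assertion about $\text{Conn}_G(C)$ will then be a formal consequence of the algebraicity of $\text{Bun}_G(C)$.

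First I would fix a $k$-scheme $T$ and a morphism $T\to\text{Bun}_G(C)$, that is, a $G$-bundle $\mathcal{P}$ on $C\times T$, and study $L\vcentcolon=\text{Conn}_G(C)\times_{\text{Bun}_G(C)}T$. The first step is to see that $L$ is (equivalent to) a set-valued functor. An object of $L$ over $g:T'\to T$ is a $G$-bundle with connection on $C\times T'$ together with an isomorphism of its underlying bundle with $(\text{id}_C\times g)^{*}\mathcal{P}$; transporting the connection across that isomorphism shows this datum is the same as a single $G$-connection on $(\text{id}_C\times g)^{*}\mathcal{P}$, and the only automorphism respecting the isomorphism is the identity. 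Hence $L(T')$ is exactly the set of $G$-connections on $(\text{id}_C\times g)^{*}\mathcal{P}$.

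Next I would use the description of a $G$-connection as an $\mathcal{O}$-linear splitting of the Atiyah sequence
\[ 0\longrightarrow V \longrightarrow At(\mathcal{P}) \xrightarrow{\;p\;} \mathcal{O}_{C\times T}\longrightarrow 0, \qquad V\vcentcolon= Ad\,\mathcal{P}\otimes\Omega^1_{C\times T/T}. \]
Here $V$ and hence $At(\mathcal{P})$ are vector bundles on $C\times T$, and the functoriality of the Atiyah sequence under pullback (established in Subsection \ref{subsection: functoriality regular connections}) guarantees that for $g:T'\to T$ the splittings of the pulled-back sequence are precisely the elements of $L(T')$. A splitting is a global section $s$ of $At(\mathcal{P})$ with $p\circ s=1$. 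Since $C\times T\to T$ is proper, flat and of finite presentation, Lemma \ref{lemma: representability of sections} represents the functor $A\vcentcolon=\Gamma_{C\times T/T}(At(\mathcal{P}))$ of relative global sections by a scheme affine and of finite presentation over $T$. Post-composition with $p$ induces a morphism $p_*:A\to\Gamma_{C\times T/T}(\mathcal{O}_{C\times T})$; because $C$ is geometrically connected and proper we have $\Gamma_{C\times T/T}(\mathcal{O}_{C\times T})\cong\mathbb{A}^1_T$, and the constant section $1$ corresponds to a closed immersion $\sigma_1:T\hookrightarrow\mathbb{A}^1_T$. Then $L=A\times_{\mathbb{A}^1_T,\sigma_1}T$, so base-changing the closed immersion $\sigma_1$ shows that $L\to A$ is a closed immersion (concretely, $L$ is the vanishing locus of the global function $p\circ s^{\mathrm{univ}}-1$ on $A$, for $s^{\mathrm{univ}}$ the universal section). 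Thus $L$ is affine and of finite type over $T$, proving that $Forget$ is schematic, affine and of finite type. For the final sentence, $\text{Bun}_G(C)$ is an algebraic stack locally of finite type over $k$ by Proposition \ref{prop: quasicompactness associated bundle morphism unipotent}(ii); since $Forget$ is representable by affine schemes and of finite type, $\text{Conn}_G(C)$ is algebraic and locally of finite type over $k$.

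The only genuine content, and where I would be most careful, is the reduction in the second paragraph together with its base-change compatibility: one must check that the stacky $2$-fiber product collapses to the set of splittings with trivial automorphisms, and that "being a splitting" is stable under pullback so that $L$ really is the section functor of $At(\mathcal{P})$ cut out by $p\circ s=1$. All of this is already furnished by the pullback-compatibility of the Atiyah sequence recorded earlier; once it is in place, representability and affineness are immediate from Lemma \ref{lemma: representability of sections}, and the identification $\Gamma_{C\times T/T}(\mathcal{O}_{C\times T})\cong\mathbb{A}^1_T$ (which uses geometric connectedness of $C$) is what turns the condition $p\circ s=1$ into a closed immersion rather than merely a closed-type condition.
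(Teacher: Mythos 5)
Your argument is correct, but it takes a genuinely different route from the paper. The paper proceeds in two steps: it first uses Lemma \ref{lemma: representability of vanishing locus} (applied to the Atiyah class $\gamma_{\mathcal{P}} \in H^1$) to cut out the closed substack $Z \subset \text{Bun}_G(C)$ of bundles whose Atiyah sequence splits, and then, after choosing one splitting $\theta$ over a given chart $S \to Z$, identifies $\text{Conn}_G(C) \times_Z S$ with the section scheme $\Gamma_{C\times S/S}(\text{Ad}\,\mathcal{P}\otimes\Omega^1_{C\times S/S})$ via $s \mapsto \theta + s$, invoking Lemma \ref{lemma: representability of sections}. You instead skip the intermediate substack $Z$ and the choice of a base splitting entirely: you embed the fiber directly into the section scheme $A = \Gamma_{C\times T/T}(At(\mathcal{P}))$ of the whole Atiyah bundle and cut out the splittings by the closed condition $p\circ s = 1$. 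What your route costs is the extra input that $\Gamma_{C\times T/T}(\mathcal{O}_{C\times T}) \cong \mathbb{A}^1_T$ \emph{universally}, i.e.\ $H^0(C\times T',\mathcal{O}) = H^0(T',\mathcal{O})$ for every $T' \to T$; this does hold since $C$ is proper, geometrically connected and smooth over $k$, but you should cite it (or, more in the spirit of the paper's toolkit, apply Lemma \ref{lemma: representability of vanishing locus} with $i=0$ to the section $p\circ s^{\mathrm{univ}} - 1 \in H^0(C\times A, \mathcal{O})$, which avoids the identification with $\mathbb{A}^1_T$ altogether). What the paper's route buys is the explicit factorization of $Forget$ as a torsor under a vector group scheme over a closed substack, which is structurally informative even though only the finite-typeness is used downstream; your route is slightly more economical and mirrors the proof of Proposition \ref{prop: quasicompactness associated bundle map}(i), as you intended. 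Your reduction of the $2$-fiber product to a set-valued functor and the final deduction of algebraicity are both fine.
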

\begin{proof}
Choose a $k$-scheme $S$ and a morphism $S \to \text{Bun}_{G}(C)$ corresponding to a $G$-bundle $\mathcal{P}$ on $C \times S$. The Atiyah sequence
\[0 \longrightarrow Ad \, \mathcal{P} \otimes \Omega^1_{C \times S / S} \longrightarrow At(\mathcal{P}) \longrightarrow \mathcal{O}_{C \times S} \longrightarrow 0 \]
corresponds to a torsor for the vector bundle groups scheme $Ad \, \mathcal{P} \otimes \Omega^1_{C \times S / S}$ on $C \times S$. Let us denote by $X \to C \times S$ the total space of this torsor. Note that the morphism $X \to C\times S$ is schematic, affine and of finite presentation. This follows from \cite[\href{https://stacks.math.columbia.edu/tag/0245}{Tag 0245}]{stacks-project}+ \cite[\href{https://stacks.math.columbia.edu/tag/02L5}{Tag 02L5}]{stacks-project}+ \cite[\href{https://stacks.math.columbia.edu/tag/02L0}{Tag 02L0}]{stacks-project}, since \'etale locally on $C \times S$ we have that $X$ is isomorphic to the total space of the pullback of the vector bundle $Ad \, \mathcal{P} \otimes \Omega^1_{C \times S / S}$, which is relatively affine and of finite presentation. By definition, sections $s: C \times S \to X$ are in natural correspondence with splittings of the Atiyah sequence. We can use this to describe the fiber product $\text{Conn}_{G}(C) \times_{\text{Bun}_{G}(C)} S$. Indeed, it follows that for any scheme $T \to S$ we have
\begin{align*}
    (\text{Conn}_{G}(C) \times_{\text{Bun}_{G}(C)} S)(T) & = \left\{ \; \text{sections of the base-changed torsor $X \times_{S} T \to C\times T$}\; \right\}\\
    & = \Gamma_{C\times S/S}(X)(T)
\end{align*}
By \Cref{lemma: representability of sections}, the functor $\Gamma_{C\times S/S}(X)$ is represented by a scheme that is relatively affine and of finite type over $S$. It follows that $\text{Conn}_{G}(C) \times_{\text{Bun}_{G}(C)} \to S$ is affine and of finite type, as desired.
\end{proof}

We now use the $Forget$ map to prove that the stack $\text{Conn}_G(C)$ is quasicompact whenever $\text{char} \, k = 0$.
\begin{prop} \label{prop: quasicompactness regular connections}
Suppose that the characteristic of $k$ is $0$. The algebraic stack $\text{Conn}_G(C)$ is of finite type over $k$.
\end{prop}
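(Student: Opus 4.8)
The plan is to show that $\text{Conn}_G(C)$ is quasicompact; since Proposition \ref{prop: moduli of connections affine bundle} already gives that it is locally of finite type and that $Forget : \text{Conn}_G(C) \to \text{Bun}_G(C)$ is affine of finite type, finite type will follow. As $Forget$ is of finite type, hence a quasicompact morphism, it suffices to exhibit a quasicompact open substack $\mathcal{U} \subseteq \text{Bun}_G(C)$ containing the image of $Forget$; then $\text{Conn}_G(C) = Forget^{-1}(\mathcal{U})$ is quasicompact. Concretely, I must show that the $G$-bundles admitting a connection form a bounded family.

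First I would reduce to the reductive case. Writing $U$ for the unipotent radical and $\bar{G} = G/U$, covariant functoriality of connections (Subsection \ref{subsection: functoriality regular connections}) shows that if $\mathcal{P}$ carries a connection then so does $(\rho_G)_* \mathcal{P}$, since extension of structure group sends $\gamma_{\mathcal{P}}$ to $\gamma_{(\rho_G)_* \mathcal{P}}$. Because $(\rho_G)_* : \text{Bun}_G(C) \to \text{Bun}_{\bar{G}}(C)$ is of finite type by Proposition \ref{prop: quasicompactness associated bundle morphism unipotent}(iii), the preimage of a bounded family in $\text{Bun}_{\bar{G}}(C)$ is again bounded, so it is enough to treat $\bar{G}$. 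Next, choosing a faithful representation $\rho : \bar{G} \hookrightarrow \text{GL}_n$, the same functoriality shows that $\rho_* \mathcal{Q}$ carries a connection whenever $\mathcal{Q}$ does, and $\rho_* : \text{Bun}_{\bar{G}}(C) \to \text{Bun}_{\text{GL}_n}(C)$ is affine of finite type by Proposition \ref{prop: quasicompactness associated bundle map}(i). Thus everything reduces to the statement that rank $n$ vector bundles on $C$ admitting a connection form a bounded family.

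This last point is the heart of the argument, and where the Harder--Narasimhan filtration enters. Let $(\mathcal{E}, \nabla)$ be such a bundle; by Lemma \ref{lemma: weil theorem}(i) we have $\deg \mathcal{E} = 0$. Let $0 = \mathcal{E}_0 \subset \mathcal{E}_1 \subset \cdots \subset \mathcal{E}_l = \mathcal{E}$ be its HN filtration, with slopes $\mu_1 > \cdots > \mu_l$. The key observation is that for each $i$ the second fundamental form, i.e. the composite
\[ \mathcal{E}_i \hookrightarrow \mathcal{E} \xrightarrow{\nabla} \mathcal{E} \otimes \Omega^1_{C/k} \twoheadrightarrow (\mathcal{E}/\mathcal{E}_i) \otimes \Omega^1_{C/k}, \]
is $\mathcal{O}_C$-linear, the Leibniz term dying upon projection to the quotient. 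If this map is nonzero, comparing slopes gives $\mu_i = \mu_{\min}(\mathcal{E}_i) \le \mu_{\max}\big((\mathcal{E}/\mathcal{E}_i)\otimes \Omega^1_{C/k}\big) = \mu_{i+1} + (2g-2)$; if it vanishes, then $\mathcal{E}_i$ is stable under $\nabla$ and Lemma \ref{lemma: weil theorem}(ii) forces $\deg \mathcal{E}_i = 0$, whence $\mu_i \le \mu(\mathcal{E}_i) = 0$. Telescoping these inequalities up to the first index where the form vanishes, and using $\deg \mathcal{E} = 0$ (which gives $\mu_l \le 0 \le \mu_1$), yields $\mu_{\max}(\mathcal{E}) \le (n-1)(2g-2)$; applying the same bound to the dual bundle $\mathcal{E}^\vee$, which also carries a connection, bounds $\mu_{\min}(\mathcal{E})$ from below. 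Hence all HN slopes of $\mathcal{E}$ lie in a fixed interval depending only on $n$ and $g$.

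Finally, rank $n$ bundles of degree $0$ whose HN slopes lie in a fixed bounded interval constitute a bounded family, hence are contained in a quasicompact open substack of $\text{Bun}_{\text{GL}_n}(C)$; unwinding the two reductions completes the proof. I expect the main obstacle to be the middle step: verifying the $\mathcal{O}_C$-linearity of the second fundamental form together with the slope comparison, and organizing the case distinction between HN pieces that are stable under $\nabla$ and those that are not, so that the telescoping produces a clean uniform bound. The reductions to $\bar{G}$ and to $\text{GL}_n$, by contrast, are formal given the finite-type statements already proved, and I note that this Harder--Narasimhan argument is precisely what yields the explicit bound $(n-1)(2g-2)$ advertised in the introduction.
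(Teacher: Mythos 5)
Your proposal is correct, and its skeleton --- reduce to $G/U$ and then to $\text{GL}_n$ via a faithful representation using Propositions \ref{prop: quasicompactness associated bundle map} and \ref{prop: quasicompactness associated bundle morphism unipotent}, then show that the Harder--Narasimhan polygons of rank $n$ bundles admitting a connection form a finite set --- is exactly the paper's. Where you genuinely diverge is in the key $\text{GL}_n$ step. You run the classical second-fundamental-form argument: the $\mathcal{O}_C$-linear map $\mathcal{E}_i \to (\mathcal{E}/\mathcal{E}_i)\otimes \Omega^1_{C/k}$ is either nonzero, forcing $\mu_i \leq \mu_{i+1} + (2g-2)$ by the slope comparison, or zero, making $\mathcal{E}_i$ a $\nabla$-stable subbundle of degree $0$ by Lemma \ref{lemma: weil theorem}(ii). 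The paper never introduces the second fundamental form: it argues that if a consecutive gap exceeded $\text{max}\{2g-2,0\}$, then Serre duality would give $\text{Ext}^1(\mathcal{E}/\mathcal{F}_k, \mathcal{F}_k)=0$, so the Harder--Narasimhan step would split off as a direct summand of positive degree, contradicting Lemma \ref{lemma: weil theorem}(i); there the connection enters only through Weil's theorem for direct summands. Both routes are sound. The paper's version bounds \emph{every} consecutive gap by $\text{max}\{2g-2,0\}$ and then uses $\sum_j \mu_j = 0$ to get $\mu_1 \leq (n-1)\,\text{max}\{g-1,0\}$, a factor of $2$ sharper than your telescoped bound (which, incidentally, should be stated as $\mu_1 \leq (n-1)\,\text{max}\{2g-2,0\}$: for $g=0$ the quantity $(n-1)(2g-2)$ is negative and the literal inequality fails for the trivial bundle); your version has the advantage of being the argument that transports directly to Higgs bundles and to the logarithmic setting, where the splitting trick is less available. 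Your appeal to the dual bundle to bound $\mu_{\min}$ is fine, though $\deg \mathcal{E}=0$ together with the upper bound on $\mu_1$ already yields it.
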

\begin{proof}
By Proposition \ref{prop: moduli of connections affine bundle}, it suffices to show that that the map $Forget: \text{Conn}_{G}(C) \longrightarrow \text{Bun}_G(C)$ factors through an open substack $W \subset \text{Bun}_G(C)$ that is of finite type over $k$. We shall prove this.
	
Let $U$ be the unipotent radical of $G$. Denote by $\rho_{G}: G \rightarrow G/U$ the correspoding quotient morphism. Choose a faithful representation $\rho: G/U \longrightarrow \text{GL}_n$ of the reductive group $G/U$. For any $G$-bundle $\mathcal{P}$ we can form the associated $\text{GL}_n$-bundle $(\rho \circ \rho_{G})_{*} \mathcal{P}$ via extension of structure group. This construction defines a morphism $(\rho \circ \rho_{G})_{*}: \text{Bun}_G(C) \longrightarrow \text{Bun}_{\text{GL}_n}(C)$. For any $G$-connection $\theta$ on $\mathcal{P}$ we get an induced $\text{GL}_n$-connection $(\rho \circ \rho_{G})_{*}\theta$ on $\rho_{*} \mathcal{P}$. There is a commutative diagram
\[\xymatrix{
		\text{Conn}_{G}(C) \ar[r] \ar[d]^{Forget} & \text{Conn}_{G/U}(C) \ar[r] \ar[d]^{Forget} & \text{Conn}_{\text{GL}_n}(C) \ar[d]^{Forget}\\ \text{Bun}_{G}(C) \ar[r]^{(\rho_{G})_*} &
		\text{Bun}_{G/U}(C) \ar[r]^{\rho_{*}} & \text{Bun}_{\text{GL}_n}(C) } \]
Propositions \ref{prop: quasicompactness associated bundle map} and \ref{prop: quasicompactness associated bundle morphism unipotent} show that the two horizontal morphisms at the bottom of the diagram above are of finite type. Therefore the composition $(\rho \circ \rho_{G})_{*}$ is of finite type. The claim will follow if we can show that the forgetful map $Forget: \text{Conn}_{\text{GL}_n}(C) \longrightarrow \text{Bun}_{\text{GL}_n}(C)$ factors through an open substack $W \subset \text{Bun}_{\text{GL}_n}(C)$ of finite type over $k$.

For this purpose, we will use the Harder-Narasimhan stratification of $\text{Bun}_{\text{GL}_n}$ \cite{schieder-hnstratification}. For any rational cocharacter $\lambda \in \mathbb{Q}^n$, let $\text{Bun}_{\text{GL}_n}^{\leq \lambda}(C)$ be the quasicompact open substack of $\text{Bun}_{GL_n}(C)$ consisting of the strata with Harder-Narasimhan polygon smaller than $\lambda$. We will show that $Forget: \text{Conn}_{\text{GL}_n}(C) \longrightarrow \text{Bun}_{\text{GL}_n}(C)$ factors through a finite union $\bigcup_{j}\text{Bun}_{\text{GL}_n}^{\leq \lambda_j}(C)$ for some $\lambda_j$. This can be checked at the level of field-valued points. Hence it suffices to check that the set of Harder-Narasimhan polygons of geometric points $\overline{t} \longrightarrow \text{Bun}_{\text{GL}_n}(C)$ with nonempty fiber $ \text{Conn}_{\text{GL}_n}(C)_{\overline{t}}$ is a finite set. 

Let $\mu = (\mu_1 \geq \mu_2 \geq ... \geq \mu_n)$ be a tuple of rational numbers in $\frac{1}{n!}\mathbb{Z}^n$. Let $\mathcal{E}: \overline{t} \longrightarrow \text{Bun}_{\text{GL}_n}(C)$ be a vector bundle on $C\times \overline{t}$, where $\overline{t}$ is $\text{Spec}$ of an algebrically closed field. Suppose that the Harder-Narasimhan polygon of $\mathcal{E}$ is given by $\mu$. Assume that the fiber $ \text{Conn}_{\text{GL}_n}(C)_{\overline{t}}$ is nonempty. This means that $\mathcal{E}$ admits a connection. We claim that $\mu$ satisfies the following two conditions
\begin{enumerate}[(a)]
    \item $\sum_{j =1}^n \mu_j = 0$.
    \item $\mu_{j} - \mu_{j+1} \leq \text{max}\{2g-2, 0\}$ for all $1 \leq j \leq n-1$.
\end{enumerate}
The claim implies that $-\left(\text{max}(\{g-1, 0\}\right) \cdot (n-1) \leq \mu_n \leq \mu_1 \leq \left(\text{max}\{g-1, 0\}\right)\cdot (n-1)$. This would show that there are finitely many possibilities for $\mu \in \frac{1}{n!}\mathbb{Z}^n$. We are left to proving the claim.
\medskip

\noindent (a) By Lemma \ref{lemma: weil theorem}, $\text{deg}\, \mathcal{E} = 0$. This is equivalent to the condition (a) in the claim above.
\medskip

\noindent (b) For the sake of contradiction, assume that $\mu_{j+1} - \mu_{j}> \text{max}\{2g-2, 0\}$ for some $1 \leq j \leq n-1$. Suppose that $\mathcal{E}$ has Harder-Narasimhan filtration
\[0 \subset \mathcal{F}_1 \subset \mathcal{F}_2 \subset ... \subset \mathcal{F}_l = \mathcal{E}   \]
There is an index $1 \leq k \leq l-1$ such that $\mu\left( \mathcal{F}_{k} \, / \, \mathcal{F}_{k-1} \right) = \mu_j$. We have a short exact sequence
\[0 \longrightarrow \mathcal{F}_k \longrightarrow \mathcal{E} \longrightarrow \mathcal{E} \, / \, \mathcal{F}_k \longrightarrow 0  \]
An application of Serre duality plus the definition of semistability and the hypothesis $\mu_{j+1} - \mu_{j}> \text{max}(2g-2, 0)$ implies that $\text{Ext}^1\left(\mathcal{E}\, / \, \mathcal{F}_k \,, \, \mathcal{F}_k \right) = 0$. We conclude that $\mathcal{E} = \mathcal{F}_k \, \oplus \, \mathcal{E}\, / \, \mathcal{F}_k$. But by assumption we must have 
\[\text{deg} \, \mathcal{F}_k \, >  \, \text{deg} \,\mathcal{E}  \, > \, \text{deg} \,\mathcal{E} \, / \, \mathcal{F}_k\]
Hence $\text{deg} \, \mathcal{F}_k \, > \, 0$. This contradicts Lemma \ref{lemma: weil theorem} (i), because $\mathcal{F}_k$ is a direct summand of $\mathcal{E}$.
\end{proof}

We record another proof that $\text{Conn}_{\text{GL}_{n}}(C)$ is of finite type as a consequence of Simpson's work \cite{Simpson-repnI}. The stack $\text{Conn}_{\text{GL}_{n}}(C)$ classifies rank $n$ vector bundles equipped with a connection. These are $\Lambda$-modules of pure dimension $1$ on $C$, where $\Lambda$ is the sheaf of differential operators on $C$ (using the notation in \cite{Simpson-repnI}[pg. 85-86]). 
If $\mathcal{E}$ is a vector bundle with a connection, then $\text{deg}(\mathcal{E}) = 0$ by Lemma \ref{lemma: weil theorem}. If furthermore $\mathcal{E}$ has rank $n$, then we can use Riemann-Roch to see that the Hilbert polynomial of $\mathcal{E}$ is given by
\[p(\mathcal{E}, x) = \left(n \cdot \text{deg} \, \mathcal{O}_{C}(1)\right) x + n \cdot(1-g) \]
In particular the slope of $\mathcal{E}$ with respect to $\mathcal{O}_{C}(1)$ is given by $\widehat{\mu}(\mathcal{E}) = \frac{1-g}{\text{deg}\, \mathcal{O}_{C}(1)}$. Suppose that $\mathcal{G} \subset \mathcal{E}$ is a $\Lambda$-submodule. This means that $\mathcal{G}$ is a subbundle that is stable under the connection. Lemma \ref{lemma: weil theorem} (ii) implies that $\text{deg} \,  \mathcal{G} =0$. The same Riemann-Roch computation can be applied to see that $\widehat{\mu}(\mathcal{G}) = \frac{1-g}{\text{deg}\, \mathcal{O}_{C}(1)}$. This shows that $\mathcal{E}$ is a $\mu$-semistable $\Lambda$-module in the sense of \cite{Simpson-repnI}[\S3]. It follows that $\text{Conn}_{\text{GL}_n}(C)$ classifies $\mu$-semistable $\Lambda$-modules with fixed Hilbert polynomial $P(x) = \left(n \cdot \text{deg} \, \mathcal{O}_{C}(1)\right) x + n \cdot(1-g)$. We can therefore apply \cite{Simpson-repnI}[Cor. 3.4] to conclude that the moduli stack $\text{Conn}_{\text{GL}_{n}}(C)$ is quasicompact.

In fact \cite{Simpson-repnI}[Lemma 3.3] provides a bound on the Harder-Narasimhan polygons of points in the image of the morphism $\text{Forget}: \text{Conn}_{\text{GL}_n}(C) \rightarrow \text{Bun}_{\text{GL}_n}(C)$. We assume that the ample line bundle $\mathcal{O}_{C}(1)$ has degree $1$. Using the notation of \cite{Simpson-repnI}[Lemma 3.3], we have that $\text{Gr}_{1}(\Lambda)$ is the tangent bundle of $C$. A Riemann-Roch computation shows that we can take $m = 4g-3$ in \cite{Simpson-repnI}[Lemma 3.3]. Let $\mathcal{E} \in \text{Forget}\left(\text{Conn}_{\text{GL}_n}(C)\right)$ be a vector bundle with Harder-Narasimhan type $\mu= (\mu_1 \geq \mu_2 \geq ... \geq \mu_n)$. We have $\mu_1 = \mu(\mathcal{F}_1) = \widehat{\mu}(\mathcal{F}_1) + (1-g)$, where $\mathcal{F}_1$ is the first step in the Harder-Narasimhan filtration. \cite{Simpson-repnI}[Lemma 3.3] shows that $\mu_1 \leq \left(\text{max}\{4g-3, 0\}\right)\cdot n$. Using this inequality and the fact that $\sum_{j=1}^{n} \mu_j =0$, we get a bound on the Harder-Narasimhan polygon of $\mathcal{E}$. Hence there are finitely many possibilities for $\mu$. This bound on the Harder-Narasimhan type is weaker than the one obtained in the proof of Proposition \ref{prop: quasicompactness regular connections}.
\end{subsection}
\end{section}
\begin{section}{The moduli stack of logarithmic $G$-connections}
\begin{subsection}{Logarithmic $G$-connections and their residues}
We start this section by recalling the necessary preliminaries on logarithmic connections. Our initial exposition is adapted from the holomorphic case considered in \cite{Biswas_2017}[2.2, 2.3]. 

Let $D = \sum_{i \in I} x_i$ be the reduced divisor of $C$ supported at the $x_i$'s. There is a canonical inclusion of sheaves $\mathcal{O}_{C \times S}(-D) \hookrightarrow \mathcal{O}_{C\times S}$ induced by $x_i$. The logarithmic Atiyah sequence $\text{At}^D(\mathcal{P})$ with poles at $D$ is defined by the following pullback diagram.
\begin{figure}[H]
\centering
\begin{tikzcd}
     At^D(\mathcal{P}) \ar[r] \ar[d, symbol = \hookrightarrow]  & \mathcal{O}_{C \times S}(-D) \ar[d, symbol = \hookrightarrow] \\
     At(\mathcal{P}) \ar[r] & \mathcal{O}_{C \times S}
\end{tikzcd}
\end{figure}
\vspace{-0.5cm}
By definition, there is a commutative diagram of short exact sequences
\begin{figure}[H]
\centering
\begin{tikzcd}
    0  \ar[r]& \text{Ad} \, \mathcal{P} \otimes \Omega^1_{C \times S/S} \ar[d, symbol = \xlongequal{}] \ar[r] & At^D(\mathcal{P}) \ar[r] \ar[d, symbol = \hookrightarrow]  & \mathcal{O}_{C \times S}(-D) \ar[r] \ar[d, symbol = \hookrightarrow] & 0\\
    0 \ar[r] & \text{Ad} \, \mathcal{P} \otimes \Omega^1_{C \times S/S} \ar[r] & At(\mathcal{P}) \ar[r] & \mathcal{O}_{C \times S} \ar[r] & 0
\end{tikzcd}
\end{figure}
\vspace{-0.5cm}
The top row is called the logarithmic Atiyah sequence with poles at $D$. A splitting of this sequence is called a logarithmic connection with poles at $D$. By construction it makes sense to ask whether a logarithmic connection $\theta$ of $\mathcal{P}$ extends to a $G$-connection of $\mathcal{P}$. This just means that the composition $ \mathcal{O}_{C \times S}(-D) \xrightarrow{\theta} \text{At}^D(\mathcal{P}) \rightarrow \text{At}(\mathcal{P})$ (uniquely) extends to $\mathcal{O}_{C \times S}$.
\begin{example}
Suppose that $S = \text{Spec} \, k$ and $G = \mathbb{G}_m$. Let $\mathcal{L}$ be a line bundle on $C$. If the divisor $D$ is nonzero, the logarithmic Atiyah sequence for $\mathcal{L}$ splits. Therefore all line bundles admit logarithmic connections.
\end{example}
We tensor the logarithmic Atiyah sequence with $\mathcal{O}_{C \times S}(D)$ to obtain the following diagram
\begin{figure}[H]
\centering
\begin{tikzcd}
    0  \ar[r]& \text{Ad} \, \mathcal{P} \otimes \Omega^1_{C \times S/S}(D) \ar[r] & At^D(\mathcal{P})(D) \ar[r]  & \mathcal{O}_{C \times S} \ar[r] & 0
\end{tikzcd}
\caption{Diagram 1}
\label{diagram: 1}
\end{figure}
\vspace{-0.5cm}
Let $i \in I$. Base-changing Diagram \ref{diagram: 1} to the closed subscheme $x_i \times S$ we get a diagram with exact rows and columns
\begin{figure}[H]
\centering
\begin{adjustbox}{width = \textwidth}
\begin{tikzcd}[cramped]
& 0 \ar[d] & 0 \ar[d] & 0 \ar[d] & \\
     0 \ar[r] & 0 \ar[r] \ar[d] & \left(At^D(\mathcal{P}) \, / \, At(\mathcal{P})\right) (D) \, |_{x_i \times S} \ar[r, "\psi_i"] \ar[d] & \mathcal{O}_{x_i \times S} \ar[r] \ar[d, "Id"] & 0 \\
0 \ar[r] &  Ad(\mathcal{P}) \otimes \Omega^1_{C \times S/ S}(D) \,|_{x_i \times S} \ar[r] \ar[d, "Id"] & At^D(\mathcal{P})(D)|_{x_i \times S}  \ar[r] \ar[d]& \mathcal{O}_{x_i \times S} \ar[r] \ar[d, "0"] & 0\\
0 \ar[r] & Ad(\mathcal{P}) \otimes \Omega^1_{C \times S/ S}(D) \,|_{x_i \times S} \ar[r] \ar[d] & At(\mathcal{P})(D)|_{x_i \times S} \ar[d] \ar[r] & \mathcal{O}_{x_i \times S} \ar[r] \ar[d, "Id"] & 0\\
0 \ar[r] & 0 \ar[r] \ar[d] & \left(At^D(\mathcal{P}) \, / \, At(\mathcal{P})\right)(D) \, |_{x_i \times S} \ar[r, "\psi_i"] \ar[d] & \mathcal{O}_{x_i \times S} \ar[r] \ar[d] & 0\\
& 0 & 0 & 0 &
\end{tikzcd}
\end{adjustbox}
\end{figure}
\vspace{-0.5cm}
The first two rows yield a canonical splitting of the base-change of Diagram 1  to $x_i \times S$
\begin{figure}[H]
\centering
\begin{adjustbox}{width = \textwidth}
\begin{tikzcd}
    0  \ar[r]& \text{Ad} \, \mathcal{P} \otimes \Omega^1_{C \times S/S}(D) \, |_{x_i \times S} \ar[d, symbol = \xlongequal{}] \ar[r] & At^D(\mathcal{P})(D)|_{x_i \times S} \ar[r] \ar[d, symbol = \xrightarrow{\sim}]  & \mathcal{O}_{x_i \times S} \ar[r] \ar[d, symbol = \xlongequal{}] & 0\\
    0 \ar[r] & \text{Ad} \, \mathcal{P} \otimes \Omega^1_{C \times S/S}(D) \, |_{x_i \times S} \ar[r] & \text{Ad} \, \mathcal{P} \otimes \Omega^1_{C \times S/S}(D) \, |_{x_i \times S} \oplus \mathcal{O}_{x_i \times S}  \ar[r] & \mathcal{O}_{x_i \times S} \ar[r] & 0
\end{tikzcd}
\end{adjustbox}
\caption{Diagram 2}
\label{diagram: 2}
\end{figure}
\vspace{-0.5cm}
Consider the fiber $\Omega^1_{C/ k}(D)|_{x_i}$. It is a free $\mathcal{O}_{x_i}$-module of rank $1$. There is a canonical generating element given as follows. Choose a uniformizer $z_i$ of the discrete valuation local ring $\mathcal{O}_{C,x_i}$. Then, the meromorphic $1$-form $\frac{d z_i}{z_i}$ is a generator of $\Omega^1_{C/k}(D)|_{\mathcal{O}_{C, x_i}}$, and its restriction to the fiber at $x_i$ does not depend on the choice of uniformizer. This induces a canonical trivialization $\Omega^1_{C/ k}(D)|_{x_i} \cong \mathcal{O}_{x_i}$. This in turn yields an isomorphism $\Omega^1_{C \times S/ S}(D)|_{x_i \times S} \cong \mathcal{O}_{x_i \times S}$ after base-changing to $x_i \times S$. This canonical isomorphism $\Omega^1_{C \times S/ S}(D)|_{x_i \times S} \cong \mathcal{O}_{x_i \times S}$ and the splitting in Diagram \ref{diagram: 2} can be used to identify the $x_i \times S$ fiber of Diagram \ref{diagram: 1} with the following split short exact sequence
\begin{figure}[H]
\centering
\begin{tikzcd}
    0 \ar[r] & \text{Ad} \left(\mathcal{P}|_{x_i \times S}\right) \ar[r] & \text{Ad} \left(\mathcal{P}|_{x_i \times S}\right) \oplus \mathcal{O}_{x_i \times S}  \ar[r] & \mathcal{O}_{x_i \times S} \ar[r] & 0
\end{tikzcd}
\caption{Diagram 3}
\label{diagram: 3}
\end{figure}
\vspace{-0.5cm}
Let $\theta$ be a logarithmic connection on $\mathcal{P}$ with poles at $D$, viewed as a splitting of the logarithmic Atiyah sequence. We can tensor with $\mathcal{O}_{C \times S}(D)$ and pass to the $x_i \times S$ fiber to obtain a splitting of Diagram \ref{diagram: 3}. Such splittings are in natural correspondence with global sections of the vector bundle $\text{Ad} \left(\mathcal{P}|_{x_i \times S}\right)$. The section corresponding to the splitting induced by $\theta$ is called the residue of $\theta$ at $x_i$, and will be denoted $\text{Res}_{x_i} \theta \in H^0\left(x_i \times S, \, \text{Ad} \left(\mathcal{P}|_{x_i \times S}\right) \, \right)$.

Our next goal is to define a version of the logarithmic Atiyah sequence with prescribed residues. Let $S$ be a $k$-scheme. Let $\mathcal{P}$ be a $G$-bundle on $C \times S$. For each $i \in I$, fix a section $s_i \in H^0\left(x_i \times S, \, Ad(\mathcal{P}|_{x_i \times S}) \, \right)$. Set $W_i$ to be the cokernel of the map $s_i \oplus Id_{\mathcal{O}_{x_i \times S}} : \, \mathcal{O}_{x_i \times S} \longrightarrow Ad(\mathcal{P}|_{x_i \times S}) \oplus \mathcal{O}_{x_i \times S}$. We have seen that there is an isomorphism
\[  At^D(\mathcal{P})(D)|_{x_i \times S} \xrightarrow{\sim} \text{Ad} \left(\mathcal{P}|_{x_i \times S}\right) \oplus \mathcal{O}_{x_i \times S} \]
Define $At^{D,  s_i}(\mathcal{P})$ to be the kernel of the composition
\begin{gather*}
    At^D(\mathcal{P})(D) \xrightarrow{\underset{i \in I}{\oplus} unit} \bigoplus_{i \in I} (q_i \times id_S)_* \left( At^D(\mathcal{P})(D)|_{x_i \times S} \right) \xrightarrow{\sim} \bigoplus_{i \in I} (q_i \times id_S)_* \, \left(Ad(\mathcal{P}|_{x_i \times S}) \oplus \mathcal{O}_{x_i \times S} \right) \twoheadrightarrow \bigoplus_{i \in I} (q_i \times id_S)_* \,  W_i
\end{gather*}
Consider the pullback diagram
\begin{figure}[H]
\centering
\begin{tikzcd}
    0  \ar[r] & \text{Ad} \, \mathcal{P} \otimes \Omega^1_{C \times S/S} \ar[d, symbol = \hookrightarrow] \ar[r] & At^{D, s_i}(\mathcal{P}) \ar[r] \ar[d, symbol = \hookrightarrow]  & \mathcal{O}_{C \times S} \ar[r] \ar[d, symbol = \xlongequal{}] & 0\\
    0 \ar[r] & \text{Ad} \, \mathcal{P} \otimes \Omega^1_{C \times S/S}(D) \ar[r] & At^D(\mathcal{P})(D) \ar[r] & \mathcal{O}_{C \times S} \ar[r] & 0
\end{tikzcd}
\caption{Diagram 4}
\label{diagram: 4}
\end{figure}
\vspace{-0.5cm}
By construction, there is a bijection between logarithmic $G$-connections $\theta$ on $\mathcal{P}$ such that $\text{Res}_{x_i} \theta = s_i$ for all $ i \in I$ and splittings of the top row in Diagram \ref{diagram: 4}. We refer to this short exact sequence as the logarithmic Atiyah sequence with prescribed residues.
\begin{figure}[H]
\centering
\begin{tikzcd}
    0  \ar[r] & \text{Ad} \, \mathcal{P} \otimes \Omega^1_{C \times S/S}  \ar[r] & At^{D,s_i}(\mathcal{P}) \ar[r] & \mathcal{O}_{C \times S} \ar[r] & 0
\end{tikzcd}
\caption{Diagram 5}
\label{diagram: 5}
\end{figure}
\vspace{-0.5cm}
We set $\gamma^{D, s_i}_{\mathcal{P}} \in H^1\left(C \times S, \, Ad \, \mathcal{P} \otimes \Omega^1_{C \times S/ S} \, \right)$ to be the cohomology class associated to the extension in Diagram \ref{diagram: 5}. It is the analogue of the Atiyah class for logarithmic connections with prescribed residues. The $G$-bundle $\mathcal{P}$ admits a logarithmic connection with prescribed residues $s_i$ if and only if $\gamma^{D, s_i}_{\mathcal{P}} = 0$.  

Proposition \ref{prop: atiyah class with prescribed residues} gives an explicit description $\gamma^{D, s_i}_{\mathcal{P}}$. In order to understand this description we need some setup. For each $i \in I$, we have a short exact sequence
\[0 \longrightarrow \mathcal{O}_{C \times S} \longrightarrow \mathcal{O}_{C \times S}(x_i) \xrightarrow{unit} (q_i \times id_S)_* \left(\mathcal{O}_C(x_i) |_{x_i \times S}\right) \longrightarrow 0 \]
We tensor this sequence with $\text{Ad} \, \mathcal{P} \otimes \Omega^1_{C \times S/S}$ in order to obtain
\begin{gather*}
    0 \longrightarrow \text{Ad} \, \mathcal{P} \otimes \Omega^1_{C \times S/S} \longrightarrow \text{Ad} \, \mathcal{P} \otimes \Omega^1_{C \times S/S}(x_i) \longrightarrow (q_i \times id_S)_*\left(\text{Ad} \, \mathcal{P}|_{x_i \times S} \otimes \Omega^1_{C \times S/S}(x_i)|_{x_i \times S}\right) \longrightarrow 0
\end{gather*}
The canonical trivialization $\Omega^1_{C \times S/ S}(x_i)|_{x_i \times S} \cong \mathcal{O}_{x_i \times S}$ described earlier can be used rewrite the short exact sequence above.
\[0 \longrightarrow \text{Ad} \, \mathcal{P} \otimes \Omega^1_{C \times S/S} \longrightarrow \text{Ad} \, \mathcal{P} \otimes \Omega^1_{C \times S/S}(x_i) \longrightarrow (q_i \times id_S)_*\left(\text{Ad} \, \mathcal{P}|_{x_i \times S}\right) \longrightarrow 0 \]
Denote by $\delta_i: H^0\left(x_i \times S, \, \text{Ad} \, \mathcal{P}|_{x_i \times S}\right) \longrightarrow H^1\left(C\times S, \,  \text{Ad} \, \mathcal{P} \otimes \Omega^1_{C \times S/S}\right)$ the corresponding connecting homomorphism of sheaf cohomology groups.

The following proposition is proven in \cite{biswas-criterion-log-prescribed}[Prop. 3.1] in the case when $k = \mathbb{C}$, $G =  \text{GL}_n$ and $S = \text{Spec} \, \mathbb{C}$. The argument found there uses transcendental methods. We give an algebraic proof of the analogous result in the case of an arbitrary base $S$, arbitrary ground field $k$, and any smooth connected linear algebraic group $G$.
\begin{prop} \label{prop: atiyah class with prescribed residues}
Fix a $k$-scheme $S$. Let $\mathcal{P}$ be a $G$-bundle over $C \times S$. Let $s_i \in H^0\left(x_i \times S, \, \text{Ad}(\mathcal{P}|_{x_i \times S}) \, \right)$ be sections as above. Then, we have $\gamma^{D, s_i}_{\mathcal{P}} = \gamma_{\mathcal{P}} - \sum_{i \in I} \delta_i(s_i)$.
\end{prop}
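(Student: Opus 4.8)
The plan is to realize all three classes $\gamma_{\mathcal{P}}$, $\gamma^{D,s_i}_{\mathcal{P}}$ and $\delta_i(s_i)$ as elements of the single group $\mathrm{Ext}^1(\mathcal{O}_{C\times S},\,\mathrm{Ad}\,\mathcal{P}\otimes\Omega^1_{C\times S/S})$ and to compute inside the fixed ambient extension of Diagram \ref{diagram: 1}. Write $\mathcal{A}:=At^D(\mathcal{P})(D)$, $\mathcal{B}:=\mathrm{Ad}\,\mathcal{P}\otimes\Omega^1_{C\times S/S}$ and $\mathcal{B}':=\mathrm{Ad}\,\mathcal{P}\otimes\Omega^1_{C\times S/S}(D)$, so that Diagram \ref{diagram: 1} is a short exact sequence $0\to\mathcal{B}'\to\mathcal{A}\xrightarrow{\pi}\mathcal{O}_{C\times S}\to 0$. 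The canonical trivializations $\Omega^1_{C\times S/S}(x_i)|_{x_i\times S}\cong\mathcal{O}_{x_i\times S}$ identify the quotient $\mathcal{Q}:=\mathcal{B}'/\mathcal{B}$ with $\bigoplus_{i\in I}(q_i\times\mathrm{id}_S)_*\,\mathrm{Ad}(\mathcal{P}|_{x_i\times S})$, and under this identification the connecting homomorphism $\partial\colon \mathrm{Hom}(\mathcal{O}_{C\times S},\mathcal{Q})\to\mathrm{Ext}^1(\mathcal{O}_{C\times S},\mathcal{B})$ of the sequence $0\to\mathcal{B}\to\mathcal{B}'\to\mathcal{Q}\to 0$ decomposes as the sum $\bigoplus_i\delta_i$ of the maps introduced just before the statement.

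Next I would exhibit both $At(\mathcal{P})$ and $At^{D,s_i}(\mathcal{P})$ as kernels of two surjections $f,g\colon\mathcal{A}\to\mathcal{Q}$. Let $f$ be the restriction map $\mathcal{A}\to\bigoplus_i(q_i\times\mathrm{id}_S)_*(At^D(\mathcal{P})(D)|_{x_i\times S})\xrightarrow{\sim}\bigoplus_i(q_i\times\mathrm{id}_S)_*(\mathrm{Ad}(\mathcal{P}|_{x_i\times S})\oplus\mathcal{O}_{x_i\times S})$ followed by the projection $(a,c)\mapsto a$ to the first summand; a local computation at each $x_i$ (twisting the defining pullback of $At^D(\mathcal{P})$ by $\mathcal{O}(D)$) identifies $\ker f$ with the subsheaf $At(\mathcal{P})\subset\mathcal{A}$, compatibly with the sub $\mathcal{B}$ and the quotient $\mathcal{O}_{C\times S}$, so that $[\ker f]=\gamma_{\mathcal{P}}$. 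Unwinding the definition of $At^{D,s_i}(\mathcal{P})$ as the kernel of the map to $\bigoplus_i(q_i\times\mathrm{id}_S)_*W_i$ with $W_i=\mathrm{coker}(s_i\oplus\mathrm{Id})$, and using that the induced projection $\mathrm{Ad}(\mathcal{P}|_{x_i\times S})\oplus\mathcal{O}_{x_i\times S}\twoheadrightarrow W_i\cong\mathrm{Ad}(\mathcal{P}|_{x_i\times S})$ is $(a,c)\mapsto a-s_ic$, shows that $At^{D,s_i}(\mathcal{P})=\ker g$ for $g:=f-\sum_i s_i\circ\pi$, whence $[\ker g]=\gamma^{D,s_i}_{\mathcal{P}}$. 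Both $f$ and $g$ restrict on $\mathcal{B}'$ to the same quotient map $\mathcal{B}'\twoheadrightarrow\mathcal{Q}$, since the coordinate $c$ vanishes on $\mathcal{B}'=\ker\pi$.

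The computation then reduces to the following homological fact, which I would isolate as a lemma: if $f,g\colon\mathcal{A}\to\mathcal{Q}$ are surjections agreeing on $\mathcal{B}'$ with the fixed quotient $\mathcal{B}'\twoheadrightarrow\mathcal{Q}$, then the difference $h:=f-g$ kills $\mathcal{B}'$ and hence factors as $h=\bar h\circ\pi$ for a unique $\bar h\colon\mathcal{O}_{C\times S}\to\mathcal{Q}$, and the sub-extensions satisfy $[\ker f]-[\ker g]=\partial(\bar h)$ in $\mathrm{Ext}^1(\mathcal{O}_{C\times S},\mathcal{B})$. Granting this, $\bar h=\sum_i s_i$, and therefore $\gamma_{\mathcal{P}}-\gamma^{D,s_i}_{\mathcal{P}}=\partial\big(\sum_i s_i\big)=\sum_{i\in I}\delta_i(s_i)$, which is exactly the asserted identity. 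I would prove the lemma by a pushout--pullback diagram chase: the pullback of $0\to\mathcal{B}\to\mathcal{B}'\to\mathcal{Q}\to 0$ along $\bar h$ represents $\partial(\bar h)$, and one verifies directly that this pullback is the Baer difference of the extensions $\ker f$ and $\ker g$ sitting inside $\mathcal{A}$.

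The main obstacle is precisely this lemma, and within it the \emph{sign}. Everything else is bookkeeping of canonical identifications, but the difference-of-extensions formula must be carried out over an arbitrary base $S$, where Čech representatives are unavailable, so the argument has to be a natural diagram chase rather than a cocycle computation. Pinning down that the answer is $-\sum_i\delta_i(s_i)$ rather than $+\sum_i\delta_i(s_i)$ is the one genuinely delicate point; I expect to settle it either by tracking orientations consistently through the pushout--pullback square, or, as a cross-check, by specializing to $G=\mathbb{G}_m$ with a single puncture, where the classes become scalars computable as in Example \ref{example: atiyah class line bundle} and Lemma \ref{lemma: atiyah class line bundle}.
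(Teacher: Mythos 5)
Your proposal is correct in substance and is, at bottom, the same computation as the paper's, but packaged more cleanly. The paper also works inside the ambient extension $0\to\mathrm{Ad}\,\mathcal{P}\otimes\Omega^1_{C\times S/S}(D)\to At^D(\mathcal{P})(D)\xrightarrow{\pi}\mathcal{O}_{C\times S}\to 0$: it realizes $-\sum_i\delta_i(s_i)$ as an explicit pullback extension $\mathcal{F}$ of $\mathrm{Ad}\,\mathcal{P}\otimes\Omega^1_{C\times S/S}(D)\twoheadrightarrow\mathcal{Q}$ along the sections $(-v_i)_i$, forms the Baer combination with $At(\mathcal{P})$ by hand, and then writes down an explicit morphism $a$ onto $At^{D,s_i}(\mathcal{P})$ whose bijectivity is verified stalk by stalk. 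Your ``two kernels'' lemma absorbs all of that: once $At(\mathcal{P})=\ker f$ and $At^{D,s_i}(\mathcal{P})=\ker g$ with $f-g=\bar h\circ\pi$ and $f|_{\mathcal{B}'}=g|_{\mathcal{B}'}=u$, the assignment $(a_1,a_2)\mapsto\bigl(a_1-a_2,\,\pi(a_1)\bigr)$ identifies the Baer difference $[\ker f]-[\ker g]$ with a pullback of $\mathcal{B}'\twoheadrightarrow\mathcal{Q}$, and the only stalkwise input left is the (short, necessary) check that the canonical splitting of Diagram \ref{diagram: 2} is the one induced by an arbitrary local splitting of $At(\mathcal{P})$, so that $\ker f$ really is $At(\mathcal{P})$ and both $f,g$ restrict to $u$ on $\ker\pi$. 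Your route yields a reusable lemma and avoids the explicit Baer-sum bookkeeping; the two proofs are otherwise interchangeable.

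Two concrete warnings about the one point you flag. First, when you run the chase you will find, for $a_1\in\ker f$ and $a_2\in\ker g$ with $\pi(a_1)=\pi(a_2)$, that $u(a_1-a_2)=f(a_1)-f(a_2)=-\bar h(\pi(a_1))$, so the Baer difference is the pullback along $-\bar h$, not $+\bar h$; whether that pullback represents $\partial(-\bar h)$ or its negative is precisely the convention the paper fixes when it declares that its pullback $\mathcal{F}$ represents $-\sum_i\delta_i(s_i)$, and you must use the same normalization of $\delta_i$ or your final sign will flip. Second, your proposed $\mathbb{G}_m$ cross-check is the right anchor, but base it on genuinely independent data: the explicit logarithmic connection on $\mathcal{O}(-n)$ over $\mathbb{P}^1$ with residue $n$ at $\infty$ (Example \ref{example: unbounded family}) forces $\gamma^{D,n}_{\mathcal{O}(-n)}=0$ and so pins down the relative signs of $\gamma_{\mathcal{L}}$, $\delta$, and $\gamma^{D,s}_{\mathcal{L}}$ all at once, whereas checking only against Lemma \ref{lemma: atiyah class line bundle} and the assertion $\delta_i=\mathrm{id}$ of Example \ref{example: logarithmic obstruction line bundle} would be circular with respect to the same unstated conventions.
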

\begin{proof}
Consider the short exact sequence
\[0 \longrightarrow \mathcal{O}_{C \times S} \longrightarrow \mathcal{O}_{C \times S}(D) \xrightarrow{unit} \bigoplus_{i \in I} (q_i \times id_S)_* \left(\mathcal{O}_C(x_i) |_{x_i \times S}\right) \longrightarrow 0\]
By tensoring with $\text{Ad} \, \mathcal{P} \, \otimes \, \Omega_{C \times S/ S}^{1}$ and using the identifications $\Omega^1_{C \times S/ S}(x_i)|_{x_i \times S} \cong \mathcal{O}_{x_i \times S}$ as described above, we obtain a short exact sequence
\begin{gather*}0 \longrightarrow \text{Ad} \, \mathcal{P} \, \otimes \, \Omega^1_{C \times S/S} \, \xrightarrow{\; \;j \; \;} \, \text{Ad} \, \mathcal{P} \, \otimes \, \Omega^1_{C \times S/S}(D) \, \xrightarrow{\; \; u \; \;} \, \bigoplus_{i \in I} (q_i \times id_S)_*\left(\text{Ad} \, \mathcal{P}|_{x_i \times S}\right) \longrightarrow 0
\end{gather*}
The morphisms $j$ and $u$ above are labeled for future use. The construction of the connecting homomorphism in Cech cohomology implies that the connecting homomorphism for this short exact sequence is given by the sum
\[\sum_{i \in I} \delta_i \, : \; \bigoplus_{i \in I} H^0\left(x_i \times S, \, \text{Ad} \, \mathcal{P}|_{x_i \times S}\right) \longrightarrow H^1\left(C\times S, \,  \text{Ad} \, \mathcal{P} \otimes \Omega^1_{C \times S/S}\right)\]
We recall how to describe the extension corresponding to the cohomology class $-\sum_{i \in I} \delta_i(s_i) \in H^1\left(C\times S, \,  \text{Ad} \, \mathcal{P} \otimes \Omega^1_{C \times S/S}\right) = \text{Ext}^1\left( \mathcal{O}_{C \times S}, \,\text{Ad} \, \mathcal{P} \otimes \Omega^1_{C \times S/S}\right)$. Define a global section $v_i : \mathcal{O}_{C \times S} \longrightarrow (q_i \times id_S)_* \left(\text{Ad} \, \mathcal{P}|_{x_i \times S} \right)$ to be the composition
\[ v_i : \; \mathcal{O}_{C \times S} \, \xrightarrow{unit} \, (q_i \times id_S)_* \, \mathcal{O}_{x_i \times S} \,  \xrightarrow{(q_i \times id_S)_* s_i} \, (q_i \times id_S)_* \left(\text{Ad} \, \mathcal{P}|_{x_i \times S} \right)  \]
Let $\mathcal{F}$ be the $\mathcal{O}_{C\times S}$-sheaf given by the pullback diagram
\begin{figure}[H]
\centering
\begin{tikzcd}
    \mathcal{F}  \ar[r, "p_2"] \ar[d, "p_1"] & \mathcal{O}_{C \times S} \ar[d, "\left( \, -v_i \, \right)_{i}"] \\
    \text{Ad} \, \mathcal{P} \otimes \Omega^1_{C \times S/S}(D) \ar[r, "u"] & \bigoplus_{i \in I} (q_i \times id_S)_*\left(\text{Ad} \, \mathcal{P}|_{x_i \times S}\right)
\end{tikzcd}
\end{figure}
\vspace{-0.5cm}
This means that $\mathcal{F}$ is the kernel of the morphism
\[ \text{Ad} \, \mathcal{P} \otimes \Omega_{C \times S / S}^{1}(D)  \, \oplus \, \mathcal{O}_{C \times S} \, \xrightarrow{ \; \; \; u \, + \,  \left( v_i \right)_{i} \; \; \; } \, \bigoplus_{i \in I} \, (q_i \times id_S)_*\left(\text{Ad} \, \mathcal{P}|_{x_i \times S}\right)  \]
Here the maps $p_l$ for $l =1 ,2$ are the natural projections.

The morphism $(j, 0) : \text{Ad} \, \mathcal{P} \otimes \Omega_{C \times S/ S}^{1} \longrightarrow \text{Ad} \, \mathcal{P} \otimes \Omega_{C \times S / S}^{1}(D) \, \oplus \, \mathcal{O}_{C \times S}$ factors through the subsheaf $\mathcal{F} \subset \text{Ad} \, \mathcal{P} \otimes \Omega_{C \times S / S}^{1}(D) \, \oplus \, \mathcal{O}_{C \times S}$. By construction $(j, 0) : \text{Ad} \, \mathcal{P} \otimes \Omega_{C \times S/ S}^{1} \longrightarrow \mathcal{F}$ is a kernel for the morphism $p_2: \mathcal{F} \longrightarrow \mathcal{O}_{C \times S}$. The extension corresponding to the cohomology class $-\sum_{i \in I} \delta_i(s_i)$ is given by
\[0 \longrightarrow \, \text{Ad}\, \mathcal{P} \otimes \Omega_{C \times S / S}^{1} \, \xrightarrow{ \; \; \;\  (j, 0) \; \; \;} \, \mathcal{F} \,  \xrightarrow{\; \; \; \; p_2 \; \; \; \;} \, \mathcal{O}_{C \times S} \, \longrightarrow \, 0 \]
We describe now the extension corresponding to the cohomology class $\gamma_{\mathcal{P}} - \sum_{i \in I} \delta_{i}(s_i)$. Suppose that the Atiyah sequence for $\mathcal{P}$ is given by
	\[ 0 \, \longrightarrow \,  Ad \, \mathcal{P} \otimes \Omega^1_{C \times S / S} \, \xrightarrow{\; \; \; \; b \; \; \; \;} \,  At(\mathcal{P}) \, \xrightarrow{\; \; \; \; p \; \; \; \;} \, \mathcal{O}_{C \times S} \, \longrightarrow \, 0  \]
Define $\mathcal{G}$ to be the kernel of the morphism $p-p_2 \, : \,\text{At}(\mathcal{P}) \oplus \mathcal{F} \longrightarrow \mathcal{O}_{C \times S}$. Consider the map $(b, \, (j,0) \,) : \text{Ad} \, \mathcal{P} \otimes \Omega_{C \times S/ S}^{1} \longrightarrow \text{At}(\mathcal{P}) \oplus \mathcal{F}$. By construction it factors through the subsheaf $\mathcal{G} \subset \text{At}(\mathcal{P}) \oplus \mathcal{F}$. Recall that in the Yoneda group $\text{Ext}^1\left( \mathcal{O}_{C \times S}, \,\text{Ad} \, \mathcal{P} \otimes \Omega^1_{C \times S/S}\right)$ addition is given by the Baer sum \cite[\href{https://stacks.math.columbia.edu/tag/010I}{Tag 010I}]{stacks-project}. The Baer sum $\gamma_{\mathcal{P}} - \sum_{i \in I} \delta_{i}(s_i)$ is given by
\vspace{-0.5cm}
\begin{figure}[H]
\centering
\[ 0 \, \longrightarrow \text{Ad}\, \mathcal{P} \otimes \Omega_{C \times S / S}^{1} \, \xrightarrow{\; \; \; (b, \, 0) \; \; \;} \, \mathcal{G} \, / \, \text{Im} (b, \, (j, 0) \,) \, \xrightarrow{\; \; \; \; p \; \; \; \;} \mathcal{O}_{C \times S} \, \longrightarrow \, 0 \]
\vspace{-0.5cm}
\caption{Diagram 6}
\label{diagram: 6}
\end{figure}
\vspace{-0.5cm}
The proposition amounts to showing that the extension in Diagram \ref{diagram: 6} is isomorphic to the logarithmic Atiyah sequence with prescribed residues in Diagram \ref{diagram: 5}. We will explicitly construct an isomorphism.

There is a canonical inclusion $k : \text{At}(\mathcal{P}) \hookrightarrow \text{At}(\mathcal{P})(D)$. Define the map $w: \text{At}(\mathcal{P}) \, \oplus \,\text{Ad} \, \mathcal{P} \otimes \Omega_{C \times S / S}^{1}(D) \longrightarrow \text{At}(\mathcal{P})(D)$ to be the composition
\[\text{At}(\mathcal{P}) \, \oplus \, \text{Ad} \, \mathcal{P} \otimes \Omega_{C \times S / S}^{1}(D) \, \xrightarrow{(k, \, -b(D))} \text{At}(\mathcal{P})(D) \oplus \text{Ad}(\mathcal{P})(D) \,  \xrightarrow{\; \; + \; \;} \, \text{At}(\mathcal{P})(D)\]
Let $a: \mathcal{G} \longrightarrow \text{At}(\mathcal{P})(D)$ denote the composition
\[ \mathcal{G} \, \hookrightarrow \, \text{At}(\mathcal{P}) \oplus \mathcal{F} \, \xrightarrow{\; \; id \oplus p_1 \; \;} \, \text{At}(\mathcal{P}) \, \oplus \, \text{Ad} \, \mathcal{P} \otimes \Omega_{C \times S / S}^{1}(D) \, \xrightarrow{\; \; \; w \; \; \;} \, \text{At}(\mathcal{P})(D)  \]
By construction $\text{Im}(b, \, (j,0) \,)$ is the kernel of $a$. We are left to show the following two claims.
\begin{enumerate}[(C1)]
    \item The morphism $a$ factors through the subsheaf $\text{At}^{D, s_i}(\mathcal{P}) \subset \text{At}(\mathcal{P})(D)$.
    \item The induced map $a: \mathcal{G} \, / \, \text{Im}(b, \, (j, 0)\, ) \, \longrightarrow \, \text{At}^{D, s_i}(\mathcal{P})$ yields an isomorphism of short exact sequences
\[\begin{tikzcd}
0  \ar[r] & \text{Ad} \, \mathcal{P} \otimes \Omega^1_{C \times S/S} \ar[d, symbol = \xlongequal{}] \ar[r, "{(b, 0)}"] & \mathcal{G} \, / \, \text{Im} (b, \, (j, 0) \,) \ar[r, "p"] \ar[d, "a"]  & \mathcal{O}_{C \times S} \ar[r] \ar[d, symbol = \xlongequal{}] & 0\\
    0 \ar[r] & \text{Ad} \, \mathcal{P} \otimes \Omega^1_{C \times S/S} \ar[r] & \text{At}^{D, s_i}(\mathcal{P}) \ar[r] & \mathcal{O}_{C \times S} \ar[r] & 0
\end{tikzcd}\]
\end{enumerate}
These two claims can be checked at the level of stalks. Let $x$ be a topological point of $C\times S$.

Suppose first that $x$ does not belong to any of the divisors $x_i \times S$ for $i \in I$. Then, the stalk at $x$ of the logarithmic Atiyah sequence in Diagram \ref{diagram: 5} becomes canonically isomorphic to the stalk of the regular Atiyah sequence. Similarly, the stalk at $x$ of Diagram \ref{diagram: 6} becomes canonically isomorphic to the regular Atiyah sequence. This is because all of the operations we have performed above are trivial outside of the divisors $x_i \times S$. Under these identifications, the induced morphism on stalks $a_x$ in (C2) becomes the identity. This concludes the proof when $x$ is not contained in any of the divisors $x_i \times S$.

Assume now that $x$ is contained in $x_i \times S$ for some $i \in I$. The stalks at $x$ of the short exact sequences we have considered are just extensions of finite free modules over the local ring $\mathcal{O}_{C\times S, x}$. It follows that all of these extensions split. Choose a splitting of the $x$-stalk of the regular Atiyah sequence
\begin{figure}[H]
\centering
\begin{adjustbox}{width = \textwidth}
\begin{tikzcd}
    0  \ar[r] & \left(\text{Ad} \, \mathcal{P} \otimes \Omega^1_{C \times S/S}\right)_x \ar[d, symbol = \xlongequal{}] \ar[r, "b_x"] & At(\mathcal{P})_x \ar[r, "p_x"] \ar[d, symbol = \xrightarrow{\sim}]  & \mathcal{O}_{C \times S, x} \ar[r] \ar[d, symbol = \xlongequal{}] & 0\\
    0 \ar[r] & \left(\text{Ad} \, \mathcal{P} \otimes \Omega^1_{C \times S/S}\right)_x \ar[r] & \left(\text{Ad} \, \mathcal{P} \otimes \Omega^1_{C \times S/S}\right)_x \oplus \mathcal{O}_{C \times S, x} \ar[r] & \mathcal{O}_{C \times S, x} \ar[r] & 0
\end{tikzcd}
\end{adjustbox}
\end{figure}
\vspace{-0.5cm}
Such splitting induces an identification $\text{At}^D(\mathcal{P})(D)_x \cong \left(\text{Ad} \, \mathcal{P} \otimes \Omega^1_{C \times S/S}(D)\right)_x \oplus \mathcal{O}_{C \times S, x}$. Furthermore, we can choose the splitting so that this identification is compatible with Diagram \ref{diagram: 2} after base-changing to the residue field of $x$. By choosing a uniformizer $z_i$ for the local ring $\mathcal{O}_{C,x_i}$, we obtain an isomorphism $ \frac{dz_i}{z_i}: \mathcal{O}_{C,x_i} \xrightarrow{\sim} \Omega^1_{C/k}(D)|_{\mathcal{O}_{C, x_i}} $, which induces by base-change an isomorphism $\Omega^1_{C \times S/ S}(D)_x \cong \mathcal{O}_{C \times S, x}$. We can use this to identify the stalks $\text{At}(\mathcal{P})(D)_x \cong \text{Ad}(\mathcal{P})_x \oplus \mathcal{O}(D)_x$. Let $\overline{s}_i \in \text{Ad}(\mathcal{P})_{\kappa(x)}$ be the restriction of  the section $s_i$ to the residue field of $x$. The submodule $\text{At}^{D, s_i}(\mathcal{P})_x \subset\text{At}(\mathcal{P})(D)_x$ corresponds to
\begin{gather*}
    \text{At}^{D, s_i}(\mathcal{P})_x = \left\{ \, (c,r) \in \text{Ad}(\mathcal{P})_x \oplus \mathcal{O}_{ C \times S, x}\; \mid \; c_{\kappa(x)} = r_{\kappa(x)} \, \overline{s}_i  \, \right\} \; \subset \; \text{Ad}(\mathcal{P})_x \oplus \mathcal{O}(D)_x
\end{gather*}
Diagram \ref{diagram: 5} is identified with
\[ 0 \, \longrightarrow \text{Ad}\, \mathcal{P}(-D)_x \, \xrightarrow{\; \; \; \left(\, k(-D)_x, \, 0 \, \right)\; \; \;} \, \text{At}^{D, s_i}(\mathcal{P})_x \, \xrightarrow{\; \; \; \; \text{pr}_2 \; \; \; \;} \mathcal{O}_{C \times S, x} \, \longrightarrow \, 0 \]
By the construction of $\mathcal{F}$, the given isomorphism $\Omega^1_{C \times S/ S}(D)_x \cong \mathcal{O}_{C \times S, x}$ induces an identification
\[ \mathcal{F}_x = \left\{ \, (c,r) \in \text{Ad}(\mathcal{P})_x \oplus \mathcal{O}_{C \times S, x} \; \mid \; c_{\kappa(x)} = - r_{\kappa(x)} \, \overline{s}_i  \, \right\}   \]
The submodule $\mathcal{G}_x \subset \text{At}(\mathcal{P})_x \oplus \mathcal{F}_x$ is given by
\begin{gather*}
    \mathcal{G}_x = \left\{ \, (c ,r, d, r) \in \text{Ad}(\mathcal{P})(-D)_x \oplus \mathcal{O}_{ C \times S, x}\oplus \text{Ad}(\mathcal{P})_x \oplus \mathcal{O}_{C \times S, s} \; \mid \; d_{\kappa(x)} = - r_{\kappa(x)} \, \overline{s}_i \, \right\}
\end{gather*}
The map $a_x : \mathcal{G}_x \longrightarrow \text{At}(\mathcal{P})(D)_x$ corresponds under the identification $\text{At}(\mathcal{P}(D)_x \cong \text{Ad}(\mathcal{P})_x \oplus \mathcal{O}_{C \times S, x}$ to the composition
\begin{gather*} 
a_x : \mathcal{G}_x \, \hookrightarrow \, \text{Ad}(\mathcal{P})(-D)_x \oplus \mathcal{O}_{C \times S, x} \oplus \text{Ad}(\mathcal{P})_x \oplus \mathcal{O}_{C \times S, x} \xrightarrow{\left( \, k(-D)_x \circ \text{pr}_1  -  \text{pr}_3, \, \text{pr}_2\right)}\, \text{Ad}(\mathcal{P})_x \oplus \mathcal{O}_{C \times S, x}
\end{gather*}
In other words, $a_x(c, r, d, r) = (c-d, \, r)$. This implies that the image of $a_x$ is the submodule $\text{At}^{D, s_i}(\mathcal{P})_x$. Part (C1) of the claim follows.  On the other hand the commutativity of the induced diagram
\begin{figure}[H]
\centering
\begin{tikzcd}
0  \ar[r] & \text{Ad}(\mathcal{P})(-D)_x \ar[d, symbol = \xlongequal{}] \ar[r, "{(\text{id}, \,  0, 0 ,0)}"] & \mathcal{G}_x \, / \, \text{Im} (b, \, (j, 0) \,)_x \ar[r, "p_x"] \ar[d, symbol = \xrightarrow{\sim}]  & \mathcal{O}_{x} \ar[r] \ar[d, symbol = \xlongequal{}] & 0\\
    0 \ar[r] & \text{Ad}(\mathcal{P})(-D)_x \ar[r, "{(\, k(-D)_x, \, 0 \, )}"] & \text{At}^{D, s_i}(\mathcal{P})_x \ar[r] & \mathcal{O}_{x} \ar[r] & 0
\end{tikzcd}
\end{figure}
\vspace{-0.5cm}
follows plainly from the concrete description of $a_x$ we have given. This concludes the proof of the claims.
\end{proof}
\begin{remark}
Using the notation of \cite{biswas-criterion-log-prescribed}[Prop. 3.1] when $G = \text{GL}_n$, we have $\gamma_{\mathcal{P}} = -\phi^0_{E}$, $s_i = A(x_i)$ and $\delta_i = \gamma_{x_i}$.
\end{remark}
\begin{example} \label{example: logarithmic obstruction line bundle}
Set $S = \text{Spec} \, k$ and $G = \mathbb{G}_m$. Let $\mathcal{L}$ be a line bundle on $C$. The identifications explained in Example \ref{example: atiyah class line bundle} induce a canonical isomorphisms $H^1\left(C, \, Ad \, \mathcal{L} \otimes \Omega_{C / k}^{1}\right) \cong k$ and $H^0\left(x_i, \, Ad \, \mathcal{L}|_{x_i} \right) \cong k$. Under these identifications we have that $\delta_i$ is the identity for all $i \in I$. Lemma \ref{lemma: atiyah class line bundle} and Proposition \ref{prop: atiyah class with prescribed residues} imply that $\gamma_{\mathcal{L}}^{D, s_i} = - \text{deg}\, \mathcal{L} - \sum_{i \in I} s_i$.
\end{example}
\end{subsection}
\begin{subsection}{Indecomposable example over elliptic curves}
Let $C$ be an elliptic curve over a field $k$ of characteristic not equal to $2$. Let $x \in C$ be a $k$-point, and set $D = x$. We will work with $G = SL_2$. 

Since the $k$-vector space $H^1(C, \mathcal{O}_C)$ is one-dimensional, there exists a unique nonsplit extension of vector bundles  $0 \to \mathcal{O}_{C} \to \mathcal{E} \to \mathcal{O}_{C} \to 0$. Here $\mathcal{E}$ is a vector bundle of rank $2$, which we can think of as a $GL_2$-bundle. It follows from \cite{atiyah-vector-bundles}[Lemma 16] that $\mathcal{E}$ is indecomposable as a vector bundle.
 
 There is a canonical trivialization of the determinant $  \text{det}(\mathcal{E}) \xrightarrow{\sim} \text{det}(\mathcal{O}_{C}) \otimes \text{det}(\mathcal{O}_{C}) \xrightarrow{\sim} \mathcal{O}_{C}$. This yields a reduction of structure group to $SL_2$. We denote the corresponding $SL_2$-bundle by $\mathcal{P}$.
 \begin{lemma} \label{lemma: example indecomposable}
 Suppose that the characteristic of $k$ is $0$.
\begin{enumerate}[(i)]
    \item $\mathcal{P}$ is $L$-indecomposable in the sense of \cite{bbn-krull}[Def. 2.1].
    \item Let $\text{Aut}(\mathcal{P})$ denote the automorphism group of $\mathcal{P}$ (as defined in \cite{bbn-krull}). Then all tori contained in $\text{Aut}(\mathcal{P})$ are trivial.
    \item The Atiyah class $\gamma_{\mathcal{P}}$ is $0$.
\end{enumerate}
 \end{lemma}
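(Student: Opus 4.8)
All three statements rest on two facts about the bundle underlying $\mathcal{P}$: that $\text{Ad}\,\mathcal{P}$ is the trace-free endomorphism bundle $\mathfrak{sl}(\mathcal{E})$, and that $\mathcal{E}$ is geometrically indecomposable. Since $\mathcal{E}$ is the unique nonsplit self-extension of $\mathcal{O}_C$ and its construction commutes with base field extension, \cite{atiyah-vector-bundles}[Lemma 16] shows $\mathcal{E}$ stays indecomposable after base change to $\overline{k}$; I would use this throughout. For (i), I would unwind Definition 2.1 of \cite{bbn-krull}: $L$-indecomposability of the $SL_2$-bundle $\mathcal{P}$ means that $\mathcal{P}$ admits no reduction of structure group to a proper Levi subgroup of $SL_2$. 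Up to conjugacy the only proper Levi subgroup of $SL_2$ is a maximal torus $T\cong\mathbb{G}_m$, and a reduction of $\mathcal{P}$ to $T$ is precisely a splitting $\mathcal{E}\cong\mathcal{L}\oplus\mathcal{L}^{-1}$ into the two weight subbundles, compatible with the trivialization of $\det\mathcal{E}$. As $\mathcal{E}$ is indecomposable, no such splitting exists, which gives (i).

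For (ii), let $T'\subseteq\text{Aut}(\mathcal{P})$ be a torus. Since an automorphism of the $SL_2$-bundle $\mathcal{P}$ is determined by the induced automorphism of $\mathcal{E}$, it suffices to show that $T'$ acts trivially on $\mathcal{E}$. After base change to $\overline{k}$ the torus $T'$ splits, and its action decomposes $\mathcal{E}_{\overline{k}}$ into nonzero weight subbundles. If at least two weights occur this is a nontrivial direct sum decomposition, contradicting the indecomposability of $\mathcal{E}_{\overline{k}}$; if a single weight $\chi$ occurs then $T'$ acts by the scalar $\chi$, and compatibility with the determinant forces $\chi^2=1$, hence $\chi$ is trivial. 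In either case $T'$ acts trivially and is therefore the trivial torus. Equivalently, one may identify $\text{Lie}(\text{Aut}(\mathcal{P}))=H^0(C,\mathfrak{sl}(\mathcal{E}))$ and check that it is the line spanned by the everywhere-nilpotent section $N:\mathcal{E}\twoheadrightarrow\mathcal{O}_C\hookrightarrow\mathcal{E}$, which contains no nonzero semisimple element.

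For (iii), I would invoke the covariant functoriality of the Atiyah class established in Subsection \ref{subsection: functoriality regular connections}, applied to the inclusion $\varphi:SL_2\hookrightarrow GL_2$, for which $\varphi_*\mathcal{P}=\mathcal{E}$ and $\varphi^1_*(\gamma_{\mathcal{P}})=\gamma_{\mathcal{E}}$. The map $\varphi^1_*$ is induced by $\text{Ad}\,\varphi:\mathfrak{sl}(\mathcal{E})\hookrightarrow\text{End}(\mathcal{E})$, which is a split injection of vector bundles in characteristic $0$ via $A\mapsto A-\tfrac{1}{2}\,\text{tr}(A)\,\text{id}$; hence $\varphi^1_*$ is injective on $H^1$. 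Since $\mathcal{E}$ is indecomposable of degree $0$, the Atiyah--Weil theorem (the converse to Lemma \ref{lemma: weil theorem}, see \cite{biwas-atiyah-weil}) shows that $\mathcal{E}$ carries a connection, so $\gamma_{\mathcal{E}}=0$, and injectivity of $\varphi^1_*$ then yields $\gamma_{\mathcal{P}}=0$.

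The main obstacle I anticipate is bookkeeping rather than conceptual: pinning down the exact formulation of Definition 2.1 and of $\text{Aut}(\mathcal{P})$ in \cite{bbn-krull}, and justifying the two identifications precisely, namely that a reduction to $T$ corresponds to a splitting of $\mathcal{E}$ and that $\text{Lie}(\text{Aut}(\mathcal{P}))=H^0(C,\mathfrak{sl}(\mathcal{E}))$. The weight-space decomposition and cohomological inputs themselves are routine, and the only place where $\text{char}\,k=0$ is essential is the trace splitting $\mathfrak{gl}\cong\mathfrak{sl}\oplus\mathfrak{z}$ used in (iii) (and in the semisimple/nilpotent dichotomy in (ii)).
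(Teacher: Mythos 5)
Your proof is correct, and parts (ii) and (iii) take a genuinely different route from the paper. For (i) you and the paper do the same thing: invoke the characterization of $L$-indecomposability as the nonexistence of a reduction to a proper Levi subgroup (this is \cite{bbn-krull}[Prop.\ 2.4] rather than Definition 2.1 itself, which is the bookkeeping point you flagged) and note that a torus reduction would split $\mathcal{E}$ as $\mathcal{L}\oplus\mathcal{L}^{-1}$. For (ii) the paper simply deduces the statement from (i) together with the definition of $L$-indecomposability (all tori in $\text{Aut}(\mathcal{P})$ lie in the center, which for $SL_2$ is finite, so they are trivial); your weight-space argument proves the same thing from scratch and is fine, but it re-derives what the citation already gives. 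For (iii) the paper cites \cite{bbn-krull}[Remark 3.6], i.e.\ the $G$-bundle version of the Atiyah--Weil criterion applied to the $L$-indecomposable bundle $\mathcal{P}$, whereas you reduce to the classical vector-bundle Atiyah--Weil theorem via the extension of structure group $SL_2\hookrightarrow GL_2$, using that $\mathrm{Ad}\,\varphi:\mathfrak{sl}(\mathcal{E})\hookrightarrow\mathrm{End}(\mathcal{E})$ is split $GL_2$-equivariantly by $A\mapsto A-\tfrac12\mathrm{tr}(A)\,\mathrm{id}$ in characteristic $0$, so that $\varphi^1_*$ is injective on $H^1$ and $\gamma_{\mathcal{E}}=0$ forces $\gamma_{\mathcal{P}}=0$. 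Your version has the merit of being self-contained modulo only the classical Weil theorem for vector bundles (whose easy direction is already Lemma \ref{lemma: weil theorem} in the paper), at the cost of the extra injectivity argument; the paper's version is shorter but leans on the $G$-bundle machinery of \cite{bbn-krull}. Both are valid.
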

 \begin{proof} \quad \newline
 \noindent (i) We use \cite{bbn-krull}[Prop. 2.4]. We have to check that $\mathcal{P}$ does not admit a reduction of structure group to any proper Levi sugroup of $SL_2$. Such Levi reduction would induce a nontrivial direct sum decomposition of the vector bundle $\mathcal{E}$, contradicting the fact that $\mathcal{E}$ is indecomposable.

\noindent (ii) Since the center $Z_{G}$ of $SL_2$ is finite, \cite{bbn-krull}[Defn. 2.1] implies that any torus $T \subset \text{Aut}(\mathcal{P})$ is trivial.

\noindent (iii) It follows from \cite{bbn-krull}[Remark 3.6] that the Atiyah class $\gamma_{\mathcal{P}}$ vanishes.
 \end{proof}
 By definition, the extension bundle $0 \to \mathcal{O}_{C} \to \mathcal{E} \to \mathcal{O}_{C} \to 0$ comes from a $\mathbb{G}_a$-bundle. The Cech cocycle of $\mathcal{E}$ is obtained by choosing local splittings of the extension. This shows that the bundle $\mathcal{P}$ admits a reduction of structure group to the unipotent radical $U = \mathbb{G}_a$ of the Borel group of upper triangular matrices inside $SL_2$.
     
     Restrict the adjoint representation $\text{Ad}: SL_2 \to GL(\mathfrak{sl}_2)$ to the subgroup $U$. As a $U$-representation, $\mathfrak{sl}_2$ admits a filtration
     \[ 0 \subset \mathfrak{u} \subset \mathfrak{u} \oplus \mathfrak{t} \subset \mathfrak{sl}_2 \]
    where $\mathfrak{u}$ is the Lie algebra of $U$ and $\mathfrak{t}$ is the toral Lie algebra of diagonal matrices inside $\mathfrak{sl}_2$. Note that $U$ acts trivially on $\mathfrak{u}$, because $U$ is commutative. Therefore the associated line bundle $\mathcal{P} \times^{U} \mathfrak{u}$ is trivial. On the other hand, a matrix computation shows that the $U$-representations $\mathfrak{u} \oplus \mathfrak{t}$ and $\mathfrak{sl}_2/ \mathfrak{u}$ are isomorphic to the composition of the inclusion of $U \hookrightarrow SL_2$ and the standard representation $SL_2 \to GL_2$ (here we are using the fact that the characteristic of $k$ is not $2$). It follows that $\mathcal{P} \times^{U} (\mathfrak{u} \oplus \mathfrak{t}) \cong \mathcal{E}$ and $\mathcal{P} \times^{U} (\mathfrak{sl}_2 / \mathfrak{u}) \cong \mathcal{E}$. Therefore we have an induced filtration of $\text{Ad}(\mathcal{P})$
    \[ 0 \subset \mathcal{O}_{C} \subset \mathcal{E} \subset \text{Ad}(\mathcal{P})\]
    with $\text{Ad}(\mathcal{P})/ \mathcal{O}_{C} \cong \mathcal{E}$ and $\text{Ad}(\mathcal{P})/\mathcal{E} \cong \mathcal{O}_{C}$. The restriction of the filtration to $x$ yields full flag of the $3$-dimensional fiber $\text{Ad}(\mathcal{P})|_{x}$ by vector subspaces
    \[ 0 \subset V_1 \subset V_2 \subset \text{Ad}(\mathcal{P})|_{x}\]
     Let $\delta: H^0(C, \text{Ad}(\mathcal{P})|_{x}) \to H^1(C, \text{Ad}(\mathcal{P}) \otimes \Omega_{C}^1)$ denote the boundary morphism for the short exact sequence 
 \[ 0 \to \text{Ad}(\mathcal{P}) \otimes \Omega_{C}^1 \to \text{Ad}(\mathcal{P}) \otimes \Omega_{C}^1(x) \to \text{Ad}(\mathcal{P})|_{x} \to 0 \]
    as defined before Proposition \ref{prop: atiyah class with prescribed residues}. The following lemma gives an explicit description of $\delta$.
    \begin{lemma} \label{lemma: example elliptic curve}
    With notation as above, we have:
    \begin{enumerate}[(i)]
        \item The $k$-vector space of obstructions $H^1(C, \text{Ad}(\mathcal{P}) \otimes \Omega^1)$ has dimension $1$.
        \item The boundary morphism $\delta: H^0(C, \text{Ad}(\mathcal{P})|_{x}) \to H^1(C, \text{Ad}(\mathcal{P}) \otimes \Omega^1)$ is surjective.
        \item $V_2$ is the kernel of $\delta$.
    \end{enumerate}
    \end{lemma}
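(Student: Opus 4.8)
The plan is to exploit the two special features of this situation. First, on an elliptic curve the canonical bundle is trivial, so I would fix an isomorphism $\Omega^1_C \cong \mathcal{O}_C$ and suppress the $\Omega^1$-twist throughout. Second, since the adjoint representation of $SL_2$ is the symmetric square of the standard representation, we have $\text{Ad}(\mathcal{P}) \cong \text{Sym}^2\mathcal{E}$. Together with Serre duality on $C$, these let me pin down every cohomology group that appears.

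For part (i), I would identify the adjoint bundle explicitly. By Atiyah's computation of powers of indecomposable bundles, $\text{Ad}(\mathcal{P}) \cong \text{Sym}^2\mathcal{E}$ is isomorphic to his indecomposable bundle of rank $3$ and degree $0$ \cite{atiyah-vector-bundles}: one has $\mathcal{E}\otimes\mathcal{E} \cong \text{Sym}^2\mathcal{E} \oplus \wedge^2\mathcal{E}$ with $\wedge^2\mathcal{E} \cong \det\mathcal{E} \cong \mathcal{O}_C$, and $\text{Sym}^2\mathcal{E}$ is the rank-$3$ Atiyah bundle. An indecomposable degree-$0$ bundle on an elliptic curve has a one-dimensional space of global sections, and Riemann--Roch gives $\chi(\text{Ad}(\mathcal{P})) = 0$, so $\dim H^1(C, \text{Ad}(\mathcal{P})) = \dim H^0(C, \text{Ad}(\mathcal{P})) = 1$; since $\Omega^1_C \cong \mathcal{O}_C$ this is exactly (i). I expect this identification --- equivalently, the indecomposability of $\text{Ad}(\mathcal{P})$, or the nonsplitness of the extension $0 \to \mathcal{E} \to \text{Ad}(\mathcal{P}) \to \mathcal{O}_C \to 0$ read off the filtration --- to be the main obstacle. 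The safest routes to it are the symmetric-square computation above (in characteristic $0$ the relevant extension classes stay nonzero, ruling out a splitting $\text{Ad}(\mathcal{P}) \cong \mathcal{E}\oplus\mathcal{O}_C$), or a Serre-duality argument using the self-duality $\text{Ad}(\mathcal{P}) \cong \text{Ad}(\mathcal{P})^\vee$ coming from the Killing form.

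For part (ii), I would read $\delta$ off the long exact sequence of $0 \to \text{Ad}(\mathcal{P})\otimes\Omega^1 \to \text{Ad}(\mathcal{P})\otimes\Omega^1(x) \to \text{Ad}(\mathcal{P})|_x \to 0$: the map $\delta$ is surjective precisely when $H^1(C, \text{Ad}(\mathcal{P})\otimes\Omega^1(x))$ vanishes. Using $\Omega^1_C \cong \mathcal{O}_C$, this group is $H^1(C, \text{Ad}(\mathcal{P})(x))$, and $\text{Ad}(\mathcal{P})(x)$ is semistable of slope $1 > 0$ (the tensor of the semistable degree-$0$ bundle $\text{Ad}(\mathcal{P})$ with the degree-$1$ line bundle $\mathcal{O}_C(x)$). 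A semistable bundle of positive slope on an elliptic curve has vanishing $H^1$: by Serre duality this is $H^0$ of its dual, which is semistable of negative slope and hence has no nonzero sections. This yields the surjectivity in (ii).

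For part (iii), the dimension count does most of the work: $\dim H^0(C, \text{Ad}(\mathcal{P})|_x) = 3$ and, by (i) together with the surjectivity in (ii), $\dim\,\text{im}\,\delta = 1$, so $\dim\ker\delta = 2 = \dim V_2$. It therefore suffices to prove the inclusion $\ker\delta \subseteq V_2$, i.e. that the image of the evaluation map $H^0(C, \text{Ad}(\mathcal{P})\otimes\Omega^1(x)) \to \text{Ad}(\mathcal{P})|_x$ lands in $V_2 = \mathcal{E}|_x$. For this I would push forward along the quotient $\text{Ad}(\mathcal{P}) \to \text{Ad}(\mathcal{P})/\mathcal{E} \cong \mathcal{O}_C$, whose fiber at $x$ has kernel exactly $V_2$. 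After twisting by $\Omega^1(x)$ and using $\Omega^1_C \cong \mathcal{O}_C$, this quotient becomes $\mathcal{O}_C(x)$, and every global section of the degree-$1$ line bundle $\mathcal{O}_C(x)$ on an elliptic curve vanishes at $x$ (its unique section up to scalar is the canonical one cutting out the divisor $x$, since a function with a single simple pole cannot exist in genus $1$). Hence the value at $x$ of any global section of $\text{Ad}(\mathcal{P})\otimes\Omega^1(x)$ maps to $0$ in the quotient, so it lies in $V_2$. This gives $\ker\delta \subseteq V_2$, and equality follows from the dimension count, proving (iii).
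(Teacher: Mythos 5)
Your proof is correct, and parts (ii) and (iii) follow essentially the same route as the paper: surjectivity of $\delta$ via vanishing of the adjacent $H^1$ by semistability of slope $\pm 1$ bundles plus Serre duality, and the kernel computation via the dimension count together with the observation that global sections of $\mathrm{Ad}(\mathcal{P})(x)$ die in the quotient $(\mathrm{Ad}(\mathcal{P})/\mathcal{E})(x)|_x \cong \mathcal{O}_C(x)|_x$ because the unique section of $\mathcal{O}_C(x)$ vanishes at $x$.

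The one genuine divergence is in part (i). The paper works with the filtration $0 \subset \mathcal{O}_C \subset \mathcal{E} \subset \mathrm{Ad}(\mathcal{P})$, shows that $0 \to \mathcal{O}_C \to \mathrm{Ad}(\mathcal{P}) \to \mathcal{E} \to 0$ is the unique nonsplit extension in the sense of Atiyah's Lemma 16, and then quotes his Lemma 15(i) for $h^0 = 1$; Serre duality together with the self-duality of $\mathrm{Ad}(\mathcal{P})$ converts this into the statement about $H^1$. You instead identify $\mathrm{Ad}(\mathcal{P}) \cong \mathrm{Sym}^2\mathcal{E} \cong F_3$ directly from Atiyah's tensor-product computation $F_2 \otimes F_2 \cong F_1 \oplus F_3$, and then use $\Omega^1_C \cong \mathcal{O}_C$ and $\chi = 0$ to get $h^1 = h^0 = 1$. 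Both roads end at Atiyah's classification and both are fine; yours is arguably more self-contained since it bypasses the self-duality of the adjoint bundle for this step. One caution: your intermediate claim that \emph{an} indecomposable degree-$0$ bundle on an elliptic curve has a one-dimensional space of sections is false in that generality (the twists $F_3 \otimes L$ with $L \in \mathrm{Pic}^0(C)$ nontrivial have no sections at all); what you need, and what is true, is that the specific Atiyah bundle $F_3$ --- the iterated self-extension of $\mathcal{O}_C$, which is where the $\mathrm{Sym}^2$ computation lands you --- satisfies $h^0(F_3) = 1$ by Atiyah's Lemma 15(i). With that wording tightened, the argument is complete.
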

    \begin{proof}
    Consider the exact sequence in cohomology
    \begin{gather*}
    H^0(C, \text{Ad}(\mathcal{P}) \otimes \Omega^1_{C}(x)) \to H^0(C, \text{Ad}(\mathcal{P})|_{x}) \xrightarrow{\delta} H^1(C, \text{Ad}(\mathcal{P})\otimes \Omega^1_{C}) \to H^1(C, \text{Ad}(\mathcal{P}) \otimes \Omega^1_{C}(x))
    \end{gather*}
    Since $C$ is an elliptic curve, we have $\Omega^1_{C} \cong \mathcal{O}_{C}$. Using this identification plus an application of Serre duality yields the following exact sequence
    \begin{gather*}
        H^0(C, \text{Ad}(\mathcal{P})(x)) \to H^0(C, \text{Ad}(\mathcal{P})|_{x}) \xrightarrow{\delta} H^0(C, \text{Ad}(\mathcal{P})^{\vee})^* \to H^0(C, \text{Ad}(\mathcal{P})^{\vee}(-x))^*
        \end{gather*}
    Since the characteristic of $k$ is not $2$, the trace form on $\mathfrak{sl}_2$ is nondegenerate. This shows that the adjoint representation is self-dual, and hence $\text{Ad}(\mathcal{P}) \cong \text{Ad}(\mathcal{P})^{\vee}$. Therefore we can rewrite
    \begin{equation} \label{eqn: 2} H^0(C, \text{Ad}(\mathcal{P})(x)) \to H^0(C, \text{Ad}(\mathcal{P})|_{x}) \xrightarrow{\delta} H^0(C, \text{Ad}(\mathcal{P}))^* \to H^0(C, \text{Ad}(\mathcal{P})(-x))^* \end{equation}
    We now proceed with the proof of the lemma.
    
\noindent (i) The discussion above implies that $\text{dim}\, H^1(C, \text{Ad}(\mathcal{P}) \otimes \Omega_{C}^1)= \text{dim}\, H^0(C, \text{Ad}(\mathcal{P}))$. We know that we have filtration $0 \subset \mathcal{O}_{C} \subset \mathcal{E} \subset \text{Ad}(\mathcal{P})$ with $\text{Ad}(\mathcal{P})/ \mathcal{O}_{C} \cong \mathcal{E}$ and $\text{Ad}(\mathcal{P})/\mathcal{E} \cong \mathcal{O}_{C}$. This can be used to prove that
    \[  0 \to \mathcal{O}_{C} \to \text{Ad}(\mathcal{P}) \to \mathcal{E} \to 0 \] 
    is the unique indecomposable extension of $\mathcal{E}$ by $\mathcal{O}_{C}$ as in \cite{atiyah-vector-bundles}[Lemma 16]. It follows by direct computation (or \cite{atiyah-vector-bundles}[Lemma 15(i)]) that $\text{dim}\, H^0(C, \text{Ad}(\mathcal{P})) = 1$.
    \medskip
    
\noindent (ii) We show that $H^0(C, \text{Ad}(\mathcal{P})(-x)) = 0$ in the exact sequence (\ref{eqn: 2}). Equivalently, we need to prove that there are no nontrivial morphisms $\mathcal{O}_{C} \to \text{Ad}(\mathcal{P})(-x)$.
     The subquotients of the filtration $0 \subset \mathcal{O}_{C} \subset \mathcal{E} \subset \text{Ad}(\mathcal{P})$ are all isomorphic to the (stable) line bundle $\mathcal{O}_{C}$ of slope $\mu = 0$. It follows that $\text{Ad}(\mathcal{P})$ is semistable of slope $0$. Therefore $\text{Ad}(\mathcal{P})(-x)$ is a semistable vector bundle of negative slope $\mu = -1$. A standard argument now shows that there are no notrivial morphisms $\mathcal{O}_{C} \to \text{Ad}(\mathcal{P})$ from the semistable bundle $\mathcal{O}_{C}$ of slope $0$ to the semistable bundle $\text{Ad}(\mathcal{P})(-x)$ of negative slope (see e.g. \cite{huybrechts.lehn}[Lemma 1.3.3]).
     \medskip
     
\noindent (iii) By the exact sequence (\ref{eqn: 2}), we need to prove that the image of the following composition is $V_2$.
    \[H^0(C, \text{Ad}(\mathcal{P})(x)) \to H^0(C, \text{Ad}(\mathcal{P})(x)|_{x}) \xrightarrow{\sim} H^0(C, \text{Ad}(\mathcal{P})|_{x})\]
    By dimension count, it suffices to show that $\text{Im}(H^0(C, \text{Ad}(\mathcal{P})(x))) \subset V_2$. In other words, we want to show that the following composition is trivial.
    \begin{gather*} H^0(C, \text{Ad}(\mathcal{P})(x)) \to H^0(C, \text{Ad}(\mathcal{P})(x)|_{x}) \to H^0(C, (\text{Ad}(\mathcal{P})/\mathcal{E})(x)|_{x}) \xrightarrow{\sim} H^0(C, \text{Ad}(\mathcal{P})|_{x}))/V_2
    \end{gather*}
    Choose a global section $f$ in $H^0(C, \text{Ad}(\mathcal{P})(x))$. We need to prove that the image in $(\text{Ad}(\mathcal{P})/\mathcal{E})(x)|_{x}$ is $0$. By reduction, $f$ yields a global section $\overline{f}$ of the quotient $(\text{Ad}(\mathcal{P})/\mathcal{E})(x) \cong \mathcal{O}_{C}(x)$. A Riemann-Roch computation shows that $\text{dim} \, H^0(C, \mathcal{O}_{C}(x)) = \text{dim}\, H^0(C, \mathcal{O}_{C}) = 1$. This implies that the global section $\overline{f}$ must necessarily come from the subsheaf $\text{Ad}(\mathcal{P})/\mathcal{E} \cong \mathcal{O}_{C}$ of $(\text{Ad}(\mathcal{P})/\mathcal{E})(x)$. Therefore $\overline{f}$ becomes $0$ when restricted to the fiber $(\text{Ad}(\mathcal{P})/\mathcal{E})(x)|_{x}$, as desired.
    \end{proof}
    Now we use our previous discussion to characterize the residues that can arise from a logarithmic connection on $\mathcal{P}$.
    \begin{prop} \label{prop: main proposition elliptic curve example}
     Suppose that the characteristic of $k$ is $0$. The $SL_2$-bundle $\mathcal{P}$ admits a logarithmic connection with residue $s \in \text{Ad}(\mathcal{P})|_{x}$ at $x$ if and only if $s$ belongs to the vector subspace $V_2 \subset \text{Ad}(\mathcal{P})|_{x}$. \qed
    \end{prop}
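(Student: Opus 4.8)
The plan is to apply directly the obstruction-theoretic criterion assembled in the preceding subsections, so that the proposition becomes a formal consequence of the two lemmas just proved. Recall from the discussion surrounding Diagram \ref{diagram: 5} that $\mathcal{P}$ admits a logarithmic connection with prescribed residue $s$ at $x$ precisely when the obstruction class $\gamma^{D, s}_{\mathcal{P}} \in H^1\left(C, \, \text{Ad}(\mathcal{P}) \otimes \Omega^1_{C}\right)$ vanishes. The strategy is therefore to compute this class explicitly and read off the locus where it is zero.

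First I would invoke Proposition \ref{prop: atiyah class with prescribed residues}. Since the divisor $D = x$ is supported at a single point, the index set $I$ is a singleton, and the formula there reads $\gamma^{D, s}_{\mathcal{P}} = \gamma_{\mathcal{P}} - \delta(s)$, where $\delta$ is exactly the boundary map studied in Lemma \ref{lemma: example elliptic curve}. Next I would substitute the value of the Atiyah class: by Lemma \ref{lemma: example indecomposable}(iii) we have $\gamma_{\mathcal{P}} = 0$, so the obstruction collapses to $\gamma^{D, s}_{\mathcal{P}} = -\delta(s)$. Hence $\mathcal{P}$ carries a logarithmic connection with residue $s$ if and only if $\delta(s) = 0$, that is, if and only if $s$ lies in the kernel of $\delta$.

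Finally I would conclude by appealing to Lemma \ref{lemma: example elliptic curve}(iii), which identifies $\ker \delta$ with the subspace $V_2 \subset \text{Ad}(\mathcal{P})|_{x}$. This yields exactly the claimed equivalence: $\delta(s) = 0$ if and only if $s \in V_2$, and so the set of realizable residues is precisely $V_2$.

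The genuine content of the argument has already been extracted in the supporting lemmas, so there is no real obstacle at this final stage; the proposition is a clean combination of the vanishing of the Atiyah class (Lemma \ref{lemma: example indecomposable}(iii)) with the kernel computation for $\delta$ (Lemma \ref{lemma: example elliptic curve}(iii)), mediated by the residue formula of Proposition \ref{prop: atiyah class with prescribed residues}. If anything merits a line of care, it is confirming that the single-puncture specialization of that formula is applied correctly; but since we only need the vanishing locus of $\gamma^{D, s}_{\mathcal{P}}$, the overall sign in $\gamma^{D,s}_{\mathcal{P}} = -\delta(s)$ is immaterial to the conclusion.
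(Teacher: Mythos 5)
Your proposal is correct and follows exactly the paper's own argument: reduce to the vanishing of $\gamma^{D,s}_{\mathcal{P}}$, apply Proposition \ref{prop: atiyah class with prescribed residues} together with $\gamma_{\mathcal{P}}=0$ from Lemma \ref{lemma: example indecomposable}(iii), and identify $\ker\delta$ with $V_2$ via Lemma \ref{lemma: example elliptic curve}(iii). No differences worth noting.
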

    \begin{proof}
    Let $s \in H^0(C, \text{Ad}(\mathcal{P})|_{x})$. Recall from Diagram \ref{diagram: 5} that the $SL_2$-bundle $\mathcal{P}$ admits a logarithmic connection with residue $s$ if and only if the cohomological obstruction $\gamma_{\mathcal{P}}^{D, s}$ vanishes. By Proposition \ref{prop: atiyah class with prescribed residues}, we have $\gamma_{\mathcal{P}}^{D, s} = \gamma_{\mathcal{P}} - \delta(s)$. Lemma \ref{lemma: example indecomposable}(iii) shows that $\gamma_{\mathcal{P}} = 0$. Hence $\gamma_{\mathcal{P}}^{D, s} = -\delta(s)$. We now conclude by Lemma \ref{lemma: example elliptic curve}, which says that $\delta(s)= 0$ if and only if $s \in V_2$.
\end{proof}

 We consider now the setup of \cite{Biswas_2017}[\S1]. On \cite{Biswas_2017}[pg. 2] we have a maximal torus $T \subset \text{Aut}(\mathcal{P})$ and an associated Levi subgroup $H \subset G$. In our example of $SL_2$-bundle $\mathcal{P}$, Lemma \ref{lemma: example indecomposable}(i) implies that $H = SL_2$ (the identity reduction to $SL_2$ satisfies both conditions at the end of page 12 in \cite{Biswas_2017}). Furthermore, it follows from Lemma \ref{lemma: example indecomposable}(ii) that $T \subset \text{Aut}(\mathcal{P})$ is trivial. In particular every residue $s \in \text{Aut}(\mathcal{P})|_{x}$ is $T$-rigid. 
 
 Choose a residue $s \in \text{Ad}(\mathcal{P})|_{x}$ that does not lie in the $2$-dimensional vector subspace $V_2$. Then \cite{Biswas_2017}[Thm. 1.1] is false for the $SL_2$-bundle $\mathcal{P}$ and the residue $w_x = s$. Indeed, property (1) in \cite{Biswas_2017}[Thm. 1.1] is not satisfied: by Proposition \ref{prop: main proposition elliptic curve example} the bundle $\mathcal{P}$ does not admit a logarithmic connection with residue $s$ at $x$. On the other hand property (2) of \cite{Biswas_2017}[Thm. 1.1] is satisfied. This is because the only character $\chi$ of $H = SL_2$ is the trivial character, which satisfies $d\chi = 0$ and $\text{deg} \, E_{H}(\chi) = \text{deg} \, \mathcal{O}_{C} = 0$. This contradicts \cite{Biswas_2017} which says that properties (1) and (2) are equivalent. \cite{Biswas_2017}[Thm. 1.2] does not hold in this case either, because of the same reason.

The above error originates from the use of \cite{Biswas_2017}[Cor. 3.1] during the second half of the proof of \cite{Biswas_2017}[Prop. 5.1]. Let $S$ be the quotient $H/T_G$ of $H$ by its maximal central torus $T_G$, and let $E_{S}$ the $S$-bundle obtained from $E_{H}$ via the morphism $H \to S$. The reduction of structure group $E_{P} \subset E_{S}$ constructed in page 16 of \cite{Biswas_2017} need not be compatible with the residues $w_{x}^{S}$ obtained from $w_{x}$ under the induced morphism of adjoint bundles $\text{Ad}(E_{H}) \to \text{Ad}(E_{S})$. However, the application of \cite{Biswas_2017}[Cor. 3.1] requires compatibility of the reduction with the residues.

One way to repair \cite{Biswas_2017}[Thm. 1.1, 1.2] is to impose the above condition that $w_{x_i}^S \in \text{Ad}(E_{P})|_{x_i}$ for all $i \in I$. Indeed the proof of \cite{Biswas_2017}[Prop. 5.1] then goes through. As an example, consider our $SL_2$-bundle $\mathcal P$ over an elliptic curve $C$. In this case we have $S= SL_2$. The space of sections $H^0(C, \text{Ad}(\mathcal P))$ is one-dimensional, and hence there is a unique nonzero adjoint section up to scaling. This section is given by the inclusion $\mathcal{O}_{C} \subset \text{Ad}(\mathcal{P})$ in the first step of the filtration described above, so it does not vanish anywhere. The corresponding parabolic subgroup $P$ constructed in \cite{Biswas_2017}[pg. 16] is a Borel subgroup of $S$. The Borel reduction $E_P$ comes from the filtration $0 \subset \mathcal{O}_C \subset \mathcal E$, and the subspace $\text{Ad}(E_P)|_{x}$ of compatible residues coincides with the two-dimensional subspace $V_2 \subset \text{Ad}(\mathcal P)|_{x}$. This is consistent with our Proposition \ref{prop: main proposition elliptic curve example}.

Another possible way to repair \cite{Biswas_2017}[Thm. 1.1, 1.2] would be to impose the additional requirement that appears in \cite{Biswas_2017}[pg. 15] that $\beta = 0$. In our notation, this translates to the condition that $\gamma_{E_{S}}^{D, w^{S}_{x_i}}= -\sum_{i \in I} \delta_i(w^{S}_{x_i})$ is $0$ in $H^1(C, \text{Ad}(E_{S}) \otimes \Omega_{C}^1)$. Note that the bundle $E_S$ is $L$-indecomposable by \cite{bbn-krull}[Thm. 3.2], and hence $\gamma_{E_S}=0$.
\end{subsection}
\begin{subsection}{Functoriality for logarithmic $G$-connections} \label{subsection: functoriality logarithmic}
Logarithmic connections enjoy the same functoriality properties as regular connections. Let $f: T \longrightarrow S$ be a morphism of $k$-schemes. Let $\mathcal{P}$ be a $G$-bundle on $C \times S$. The same reasoning as for regular connections shows that the logarithmic Atiyah class $\gamma^D_{\mathcal{P}}$ behaves well under pullback. Explicilty, $\gamma^D_{(Id_C \times f)^{*} \mathcal{P}} = (Id_C \times f)^{*} \gamma^D_{\mathcal{P}}$. Here we are abusing notation and writing $(Id_C \times f)^{*}$ on the right-hand side to denote the natural map on the corresponding sheaf cohomology groups. It follows that the logarithmic Atiyah sequence for $(Id_C \times f)^{*} \mathcal{P}$ is the $(Id_C \times f)$-pullback of the Atiyah sequence for $\mathcal{P}$. We can therefore pullback logarithmic connections by interpreting them as splittings of the logarithmic Atiyah sequence. Let $\theta$ be a logarithmic $G$-connection on $\mathcal{P}$ with poles on $D$. We denote by $f^{*}\theta$ the corresponding logarithmic connection on $(Id_C \times f)^{*} \mathcal{P}$. This construction is compatible with taking residues. This implies that the obstruction classes $\gamma^{D, s_i}_{\mathcal{P}}$ and the corresponding Atiyah sequences with prescribed residues are compatible with pullback. We leave the precise formulation of these statements to the interested reader.
    
Logarithmic connections have covariant functoriality with respect to morphisms of structure groups. Fix a $k$-scheme $S$. Let $H$ be a linear algebraic group over $k$ with Lie algebra $\mathfrak{h}$. Let $\varphi: G \longrightarrow H$ be a homomorphism of algebraic groups. Let $\mathcal{P}$ be a $G$-bundle over $C \times S$. Fix a set of sections $s_i \in H^0\left(x_i \times S, \, Ad \, \mathcal{P}|_{x_i \times S} \right)$. Recall that there is a naturally defined morphism of vector bundles $Ad \, \varphi : Ad \, \mathcal{P} \longrightarrow Ad \, \varphi_{*} \mathcal{P}$. Set $\varphi_{*} s_i \vcentcolon = Ad \, \varphi|_{x_i \times S} \,(s_i) \in H^0\left( x_i \times S, \, Ad \, \mathcal{P} |_{x_i \times S}\right)$. The description of $\gamma_{\mathcal{P}}^{D, s_i}$ in Proposition \ref{prop: atiyah class with prescribed residues} and the discussion in Subsection \ref{subsection: functoriality regular connections} imply that $\gamma_{\varphi_{*}\mathcal{P}}^{D, \, \varphi_{*}s_i} = \varphi_{*}^1( \gamma_{\mathcal{P}}^{D, s_i})$. Here recall that $\varphi^1_{*}: H^1\left( C, \, Ad\, \mathcal{P} \otimes\Omega^1_{C\times S/ S}\right) \longrightarrow H^1\left( C, \, Ad\, \varphi_{*} \mathcal{P} \otimes\Omega^1_{C\times S/ S}\right)$ is the map on cohomology groups induced by $Ad \, \varphi$. The concrete description of $\varphi_{*}^1( \gamma_{\mathcal{P}}^{D, s_i})$ in terms of pushouts \cite[\href{https://stacks.math.columbia.edu/tag/010I}{Tag 010I}]{stacks-project} shows that there is a commutative diagram of Atiyah sequences with prescribed residues
\begin{figure}[H]
\centering
\begin{tikzcd}
    0  \ar[r]& \text{Ad} \, \mathcal{P} \otimes \Omega^1_{C \times S/S} \ar[d, "Ad \, \varphi \, \otimes \, id"] \ar[r] & At^{D, s_i}(\mathcal{P}) \ar[r] \ar[d, "At^{D,\, s_i}(\varphi)"]  & \mathcal{O}_{C \times S} \ar[r] \ar[d, symbol =  \xlongequal{}] & 0\\
    0 \ar[r] & \text{Ad} \, \varphi_{*} \mathcal{P} \otimes \Omega^1_{C \times S/S} \ar[r] & At^{D, \, \varphi_{*} s_i}(\varphi_{*}\mathcal{P}) \ar[r] & \mathcal{O}_{C \times S} \ar[r] & 0
\end{tikzcd}
\end{figure}
\vspace{-0.5cm}
We denote by $\text{At}^{D, \, s_i}(\varphi)$ the middle vertical arrow in the diagram above. Let $\theta$ be a logarithmic $G$-connection on $\mathcal{P}$ with residue $s_i$ at each $x_i$, viewed as a splitting of the top row. We compose it with $At^{D, \, s_i}(\varphi)$ in order to define a logarithmic $H$-connection $\varphi_{*} \theta \vcentcolon = At^{D, \, s_i}(\varphi) \circ \theta$ on the $H$-bundle $\varphi_{*} \mathcal{P}$ with residue $\varphi_{*} s_i$ at $x_i$.
\begin{example} \label{example: atiyah class degree for gln}
Let $S = \text{Spec} \, k$ and $G = \text{GL}_n$. Consider the determinant character $\text{det}: \text{GL}_n \longrightarrow \mathbb{G}_m$. The corresponding Lie algebra map $\text{Lie} (\text{det}) : \mathfrak{gl}_n \longrightarrow \mathfrak{gl}_1$ is given by taking the trace of a matrix. Let $\mathcal{E}$ be a vector bundle of rank $n$ on $C$, viewed as a $\text{GL}_n$-bundle. Fix an endomorphisms $s_i \in \text{End}(\mathcal{E}|_{x_i})$ for each $i \in I$. The corresponding element $\text{det}_{*} s_i \in \mathfrak{gl}_1$ is given by the trace $\text{tr} \, s_i$. Supose that $\mathcal{E}$ admits a logarithmic connection $\theta$ with residue $s_i$ at $x_i$. By functoriality, there is an induced connection $\text{det}_{*} \, \theta$ on $\text{det} \,\mathcal{E}$ with residue $\text{tr} \, s_i$ at $x_i$. Therefore the Atiyah class with prescribed residues $\gamma_{\text{det} \, \mathcal{E}}^{D, \, \text{tr} \,s_i}$ must vanish. It follows from Example \ref{example: logarithmic obstruction line bundle} that $\text{deg} \, \mathcal{E} = -\sum_{i \in I} \text{tr} \, s_i$.
\end{example}
\end{subsection}
\begin{subsection}{Stacks of logarithmic $G$-connections}
We describe the stack of logarithmic $G$-connections with a fixed set of poles. Let $D$ be a reduced divisor $D = \sum_{i \in I} x_i$, where $x_i \in C(k)$.
\begin{defn}
The moduli stack $\text{Conn}_G^D(C)$ is the pseudofunctor from $k$-schemes $S$ to groupoids given as follows. For every $k$-scheme $S$, we define
\[ \text{Conn}_G^D(C) \, (S) \vcentcolon =  \; \left\{ \begin{matrix} \text{groupoid of $G$-torsors} \; \mathcal{P} \text{ over } C \times S \\ $+$\\ \text{ a logarithmic $G$-connection $\theta$ on $\mathcal{P}$ with poles at $D$ } \end{matrix} \right\} \]
\end{defn}
We will want to keep track of the residues of the logarithmic connection. In order to do this we define another stack over $\text{Bun}_G(C)$.
	 
For every $G$-bundle $\mathcal{P}$ on a $k$-scheme $S$, we can form the adjoint bundle of $Ad\, \mathcal{P}$. This construction allows us to define a vector bundle $\mathcal{V}$ over $\text{B}G$. The pullback of $\mathcal{V}$ under a map $\mathcal{P}: S \rightarrow \text{B}G$ is the associated bundle $Ad \, \mathcal{P}$ on $S$. The total space of the vector bundle is represented by the map of stacks 
\[\pi : \left[ G \, \backslash \, \mathfrak{g} \right] \longrightarrow  \left[ G \, \backslash \, \text{Spec} \, k \right] = \text{B}G\]
Here the action in the left-most quotient stack is given by the adjoint representation.
	
For each $i \in I$, there is a morphism of algebraic stacks $\psi_i: \text{Bun}_{G}(C) \longrightarrow \text{B}G$ defined by taking the fiber at $x_i$. More precisely, for every $k$-scheme $S$ and every $G$-bundle $\mathcal{P}$ over $C \times S$, set $\psi_i (\mathcal{P}) = \mathcal{P}|_{x_i \times S}$.
\begin{defn}
 The stack $\text{Bun}_{G}^{Ad, \, D}(C)$ is defined to be the fiber product
 \[\xymatrix{
\text{Bun}_{G}^{Ad, \, D}(C) \ar[r] \ar[d] & \text{Bun}_{G}(C) \ar[d]^{\prod_{i \in I} \psi_i}\\
\prod_{i \in I} \left[ G \, \backslash \, \mathfrak{g} \right] \ar[r]^{\; \; \; \; \prod_{i \in I} \pi \; \;} & \prod_{i \in I} \text{B}G } \]
\end{defn}
\vspace{-0.25cm}
The definition implies that $\text{Bun}_{G}^{Ad, \, D}(C)$ is an algebraic stack that is locally of finite type over $k$. For each $k$-scheme $S$, the groupoid $\text{Bun}_{G}^{Ad, \, D}(C) \,  (S)$ is naturally equivalent to the groupoid of $G$-bundles $\mathcal{P}$ on $C \times S$ along with a global section of the associated bundle $Ad \left(\mathcal{P}|_{x_i \times S}\right)$ for each $i \in I$. There is a natural map $Forget^D: \text{Conn}^D_G(C) \longrightarrow \text{Bun}_{G}^{Ad, \, D}(C)$ given by forgetting the connection and remembering only the $G$-bundle and the residue at each point $x_i$. We have the following result, analogous to Proposition \ref{prop: moduli of connections affine bundle}.
\begin{prop} \label{prop: moduli logarithmic connections affine bundle}
The map $Forget^D: \text{Conn}^D_G(C) \longrightarrow \text{Bun}_{G}^{Ad, \, D}(C)$ is schematic, affine and of finite type. In particular, $\text{Conn}^D_G(C)$ is an algebraic stack that is locally of finite type over $k$.
\end{prop}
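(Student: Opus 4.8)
The plan is to mimic the structure of the proof of Proposition \ref{prop: moduli of connections affine bundle} for regular connections, replacing the ordinary Atiyah sequence with the logarithmic Atiyah sequence with prescribed residues (Diagram \ref{diagram: 5}). The key point is that, over the base $\text{Bun}_{G}^{Ad, \, D}(C)$, we carry along not just a $G$-bundle $\mathcal{P}$ but also the chosen residue sections $s_i$, so the relevant obstruction is $\gamma_{\mathcal{P}}^{D, s_i}$ rather than $\gamma_{\mathcal{P}}$, and the fiber of $Forget^D$ over a point where this obstruction vanishes is a torsor under the sections of $\text{Ad}\,\mathcal{P} \otimes \Omega^1_{C \times S/S}$.

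\textbf{First} I would define the locus $Z \subset \text{Bun}_{G}^{Ad, \, D}(C)$ where the obstruction vanishes. For a $k$-scheme $S$, set
\[
Z(S) \vcentcolon = \left\{ (\mathcal{P}, (s_i)_{i \in I}) \in \text{Bun}_{G}^{Ad, \, D}(C)\,(S) \; \mid \; \gamma_{\mathcal{P}}^{D, s_i} = 0 \right\}.
\]
To show $Z$ is a closed substack, I would fix a map $S \to \text{Bun}_{G}^{Ad, \, D}(C)$ corresponding to a pair $(\mathcal{P}, (s_i))$ and argue that $Z \times_{\text{Bun}_{G}^{Ad, \, D}(C)} S \to S$ is a closed immersion. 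By the compatibility of $\gamma_{\mathcal{P}}^{D, s_i}$ with base change (discussed in Subsection \ref{subsection: functoriality logarithmic}), the fiber product parametrizes those $f : T \to S$ with $(\text{id}_C \times f)^{*} \gamma_{\mathcal{P}}^{D, s_i} = 0$; this is exactly the situation handled by Lemma \ref{lemma: representability of vanishing locus} applied to the proper flat morphism $C \times S \to S$ and the class $\gamma_{\mathcal{P}}^{D, s_i} \in H^1(C \times S, \, \text{Ad}\,\mathcal{P} \otimes \Omega^1_{C \times S/S})$. Hence $Z$ is a closed substack.

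\textbf{Next}, since the vanishing of $\gamma_{\mathcal{P}}^{D, s_i}$ is necessary for the existence of a logarithmic connection with residues $s_i$, the map $Forget^D$ factors through $Z \hookrightarrow \text{Bun}_{G}^{Ad, \, D}(C)$, and it suffices to show $\text{Conn}^D_G(C) \to Z$ is schematic, affine, and of finite type. Fixing a map $S \to Z$ given by $(\mathcal{P}, (s_i))$ with $\gamma_{\mathcal{P}}^{D, s_i} = 0$, the top row of Diagram \ref{diagram: 5} splits; I would choose one splitting $\theta$, i.e. one logarithmic connection with the prescribed residues. Any other splitting differs from $\theta$ by a global section of $\text{Ad}\,\mathcal{P} \otimes \Omega^1_{C \times S/S}$, exactly as in the regular case. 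This gives an isomorphism of presheaves
\[
\Gamma_{C \times S/S}\left( \text{Ad}\,\mathcal{P} \otimes \Omega^1_{C \times S/S} \right) \xrightarrow{\;\sim\;} \text{Conn}^D_G(C) \times_{Z} S, \qquad s \longmapsto (\text{id}_C \times g)^{*}\theta + s,
\]
and Lemma \ref{lemma: representability of sections} shows the left-hand functor is represented by a scheme relatively affine and of finite type over $S$. The statement about $\text{Conn}^D_G(C)$ being an algebraic stack locally of finite type then follows because $\text{Bun}_{G}^{Ad, \, D}(C)$ already has this property (noted after its definition) and these properties are preserved under schematic affine finite-type morphisms.

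\textbf{The main subtlety} — really the only place requiring care beyond transcribing the regular-connection argument — is ensuring that the bijection between splittings of Diagram \ref{diagram: 5} and logarithmic connections with residues exactly $s_i$ is compatible with base change, so that the presheaf isomorphism above is well-defined functorially in $T$. This rests on the compatibility of the whole package ($\gamma_{\mathcal{P}}^{D, s_i}$, the Atiyah sequence with prescribed residues, and the residue-taking operation) with pullback, which was asserted in Subsection \ref{subsection: functoriality logarithmic}. Once that compatibility is invoked, the remaining steps are formally identical to the proof of Proposition \ref{prop: moduli of connections affine bundle}.
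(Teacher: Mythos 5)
Your proposal is correct and follows essentially the same route as the paper: the paper likewise reduces to the vanishing locus of $\gamma_{\mathcal{P}}^{D, s_i}$ via Lemma \ref{lemma: representability of vanishing locus}, and then identifies the fiber over that locus with $\Gamma_{C \times S/S}\left( \text{Ad}\,\mathcal{P} \otimes \Omega^1_{C \times S/S} \right)$ exactly as in Proposition \ref{prop: moduli of connections affine bundle}, using Diagram \ref{diagram: 5} in place of the ordinary Atiyah sequence. The only cosmetic difference is that you package $Z$ as a global closed substack of $\text{Bun}_{G}^{Ad, \, D}(C)$ while the paper works with its base change to a fixed $S$; the content is identical.
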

\begin{proof}
Let $S$ be a $k$-scheme. Let $(\mathcal{P}, s_i) : \, S \longrightarrow \text{Bun}_{G}^{Ad, \, D}(C)$ be a morphism. It consists of the data of a $G$-bundle $\mathcal{P}$ on $C \times S$ and a section $s_i \in H^0\left( x_i \times S, \, Ad(\mathcal{P}|_{x_i \times S}) \, \right)$ for all $i \in I$. We want to show that the projection $\text{Conn}^D_{G}(C) \times_{\text{Bun}_{G}^{Ad, \, D}(C)} \, S \, \longrightarrow \, S$ is schematic, affine and of finite type. There is a short exact sequence as in Diagram \ref{diagram: 5} above.
 \vspace{-0.25cm}
\begin{figure}[H]
\centering
\begin{tikzcd}
    0  \ar[r] & \text{Ad} \, \mathcal{P} \otimes \Omega^1_{C \times S/S}  \ar[r] & At^{D, s_i}(\mathcal{P}) \ar[r] & \mathcal{O}_{C \times S} \ar[r] & 0
\end{tikzcd}
\end{figure}
\vspace{-0.5cm}
This corresponds to a torsor for the vector bundle group scheme $\mathcal{P} \otimes \Omega^1_{C \times S/S}$. We denote by $X \to C \times S$ the total space of this torsor. Note that the morphism $X \to C\times S$ is schematic, affine and of finite presentation. This follows from \cite[\href{https://stacks.math.columbia.edu/tag/0245}{Tag 0245}]{stacks-project}+ \cite[\href{https://stacks.math.columbia.edu/tag/02L5}{Tag 02L5}]{stacks-project}+ \cite[\href{https://stacks.math.columbia.edu/tag/02L0}{Tag 02L0}]{stacks-project}, since \'etale locally on $C \times S$ we have that $X$ is isomorphic to the total space of the pullback of the vector bundle $Ad \, \mathcal{P} \otimes \Omega^1_{C \times S / S}$, which is relatively affine and of finite presentation. By the same reasoning as in \Cref{prop: moduli of connections affine bundle}, we have an identification $\text{Conn}^D_{G}(C) \times_{\text{Bun}_{G}^{Ad, \, D}(C)} \, S \cong \Gamma_{C\times S/S}(X)$ of functors over $S$. \Cref{lemma: representability of sections} implies that $\Gamma_{C\times S/S}(X) \to S$ is affine and of finite type, as desired.
\end{proof}
It is not necessarily true that the moduli stack of logarithmic connections $\text{Conn}_G^D(C)$ is quasicompact, even when $\text{char} \, k = 0$. We illustrate this with an example of an unbounded family on $\mathbb{P}^1$.
\begin{example} \label{example: unbounded family}
Let $C = \mathbb{P}^1_k = \text{Proj}(k[x_0, x_1])$. Set $G = \mathbb{G}_m$. We choose a single puncture $\infty$ in $\mathbb{P}^1_k$. For each $n \geq 1$, we equip the line bundle $\mathcal{O}(-n)$ with a logarithmic connection as follows. There is a natural inclusion
\[  \mathcal{O}(-n) \xrightarrow{x_1^n} \mathcal{O}_{\mathbb{P}^1_k}\]
that identifies $\mathcal{O}(-n)$ with the ideal sheaf of the divisor $n \infty$. We compose this with the universal derivation $d: \mathcal{O}_{\mathbb{P}^1_{k}} \rightarrow \Omega_{\mathbb{P}^1_k/k}^1$ and the natural inclusion  $\Omega_{\mathbb{P}^1_k/k}^1 \hookrightarrow \Omega_{\mathbb{P}^1_k/k}^1(\infty)$ in order to obtain a morphism
\[\partial_n :\mathcal{O}(-n) \hookrightarrow  \mathcal{O}_{\mathbb{P}^1_k/k} \xrightarrow{d} \Omega_{\mathbb{P}^1_k/k}^1 \hookrightarrow \Omega_{\mathbb{P}^1/k}^1(\infty)\]
This can be described in coordinates in the following way. Over the affine open $\mathbb{A}^1_{\frac{x_0}{x_1}}$ we have
\[ \partial_n\left( f \cdot \frac{1}{x_1^n}\right) = df \]
On the other hand, over the affine open $\mathbb{A}^1_{\frac{x_1}{x_0}}$ we have
 \[ \partial_n\left( f \cdot \frac{1}{x_0^n}\right) = \left(\frac{x_1}{x_0}\right)^n \otimes df + n f \left(\frac{x_1}{x_0}\right)^n \otimes \frac{d\left(\frac{x_1}{x_0}\right)}{\left(\frac{x_1}{x_0}\right)}  \]
This description shows that $\partial_n$ factors through the subsheaf $\mathcal{O}(-n) \otimes \Omega_{\mathbb{P}_1/k}^1(\infty) \subset \Omega_{\mathbb{P}^1/k}^1(\infty)$. This way we obtain a logarithmic connection 
\[\partial_n : \mathcal{O}(-n) \rightarrow \mathcal{O}(-n) \otimes \Omega_{\mathbb{P}^1/k}^1(\infty)\] defined on $\mathcal{O}(-n)$. The residue at the pole is $n$.
\end{example}

This example generalizes to the case when $G$ is an arbitrary nontrivial connected reductive group.
\begin{lemma} \label{lemma: stack of logarithmic connections is not quasicompact}
Let $G$ be a nontrivial connected reductive group over $k$. Suppose that $D$ is nonempty. The stack of logarithmic connections $\text{Conn}_{G}^D(C)$ is not of finite type over $k$.
\end{lemma}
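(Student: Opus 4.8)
The plan is to produce an explicit unbounded family of $G$-bundles underlying logarithmic $G$-connections, generalizing Example \ref{example: unbounded family} from $\mathbb{G}_m$ to an arbitrary nontrivial connected reductive $G$. Since being of finite type is stable under base change, and $G_{\bar k}$ is again nontrivial connected reductive with $D_{\bar k}$ nonempty, I would first reduce to the case where $k$ is algebraically closed: here I use that the formation of $\text{Conn}_G^D(C)$ commutes with base change along $\text{Spec}\,\bar k \to \text{Spec}\,k$ (which follows from the compatibility of the logarithmic Atiyah sequence and of connections with pullback, as in Subsection \ref{subsection: functoriality logarithmic}), so that the base change of $\text{Conn}_G^D(C)$ is $\text{Conn}_{G_{\bar k}}^{D_{\bar k}}(C_{\bar k})$. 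Over $\bar k$ the reductive group $G$ has positive rank, hence admits a nontrivial cocharacter $\lambda: \mathbb{G}_m \to G$.

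Next I would build the family. Fix a puncture $x_{i_0}$ in the support of $D$, and for each integer $d \ge 1$ choose a line bundle $\mathcal{L}_d$ on $C$ of degree $d$. By Example \ref{example: logarithmic obstruction line bundle} the obstruction $\gamma_{\mathcal{L}_d}^{D, s_i}$ equals $-\deg\mathcal{L}_d - \sum_i s_i$, so it vanishes for a suitable choice of residues; thus each $\mathcal{L}_d$ carries a logarithmic connection $\nabla_d$ with poles at $D$. Using the covariant functoriality of logarithmic connections (Subsection \ref{subsection: functoriality logarithmic}), the pushforward $\lambda_*\mathcal{L}_d$ acquires a logarithmic $G$-connection $\lambda_*\nabla_d$, so each pair $(\lambda_*\mathcal{L}_d,\ \lambda_*\nabla_d)$ defines a $\bar k$-point of $\text{Conn}_G^D(C)$.

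To detect unboundedness I would choose a faithful representation $\rho: G \hookrightarrow \text{GL}_n$ and decompose $\rho\circ\lambda: \mathbb{G}_m \to \text{GL}(V)$ into weight spaces $V = \bigoplus_w V_w$. The vector bundle underlying $\rho_*\lambda_*\mathcal{L}_d = (\rho\circ\lambda)_*\mathcal{L}_d$ is then $\mathcal{E}_d = \bigoplus_w (\mathcal{L}_d^{\otimes w})^{\oplus \dim V_w}$, whose slopes are the integers $wd$ with $V_w \ne 0$. Since $\rho$ is faithful and $\lambda$ is nontrivial, $\rho\circ\lambda$ is nontrivial, so some nonzero weight $w_0$ occurs; hence $\mathcal{E}_d$ has a direct summand of slope $w_0 d$ with $w_0 \ne 0$. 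As $d \to \infty$ these slopes tend to $\pm\infty$, so the Harder–Narasimhan polygons of the $\mathcal{E}_d$ are unbounded and $\{\mathcal{E}_d\}$ is not a bounded family.

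Finally I would assemble the contradiction. All the $\mathcal{E}_d$ lie in the image of the composite morphism $\text{Conn}_G^D(C) \xrightarrow{\text{Forget}^D} \text{Bun}_G^{Ad,\,D}(C) \to \text{Bun}_G(C) \xrightarrow{\rho_*} \text{Bun}_{\text{GL}_n}(C)$. If $\text{Conn}_G^D(C)$ were quasicompact, then, the continuous image of a quasicompact topological space being quasicompact, this image would be a quasicompact and hence bounded subset of $\text{Bun}_{\text{GL}_n}(C)$ — contradicting the unboundedness just established. Therefore $\text{Conn}_G^D(C)$ is not quasicompact, and since it is locally of finite type by Proposition \ref{prop: moduli logarithmic connections affine bundle}, it is not of finite type over $k$. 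I expect the only genuinely delicate points to be the existence of a nontrivial cocharacter (handled by passing to $\bar k$) and the verification that base change of the moduli stack is again the analogous moduli stack; the weight computation and the quasicompactness argument are then routine.
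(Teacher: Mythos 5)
Your proof is correct, and while it shares its skeleton with the paper's argument---both equip line bundles of unbounded degree with logarithmic connections via the residue formula of Example \ref{example: logarithmic obstruction line bundle} together with the covariant functoriality of Subsection \ref{subsection: functoriality logarithmic}, and then push them into $G$ along a torus---the two differ in how they certify unboundedness. The paper works with a full maximal torus $T \subset G$ and the family of $T$-bundles indexed by the regular $B$-dominant cocharacters $\lambda$, and then invokes the Harder--Narasimhan theory for $G$-bundles (semistability of $T$-bundles, canonical reductions, and the constructibility of the HN strata) to conclude that the resulting $G$-bundles $\mathcal{G}^{\lambda}$ have pairwise distinct HN types. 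You instead take a single nontrivial cocharacter $\mathbb{G}_m \to G$, push the resulting $G$-bundles further into $\text{GL}_n$ via a faithful representation, and read off unboundedness from the weight decomposition: the associated vector bundle is the explicit direct sum $\bigoplus_w (\mathcal{L}_d^{\otimes w})^{\oplus \dim V_w}$, and a single nonzero weight already forces the vector-bundle HN polygons to escape every quasicompact open $\text{Bun}_{\text{GL}_n}^{\leq\lambda}(C)$. Your route is more elementary in that it needs only the classical HN stratification for vector bundles rather than the canonical-reduction machinery for general $G$-bundles; what it gives up is the precise identification of the HN type of each $\mathcal{G}^{\lambda}$, which the paper's argument yields as a by-product but which is not needed for the lemma. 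The remaining steps---reduction to $\bar{k}$, existence of a nontrivial cocharacter of a nontrivial connected reductive group over $\bar{k}$, and the argument that a quasicompact stack has quasicompact, hence bounded, image in $\text{Bun}_{\text{GL}_n}(C)$---are sound and parallel what the paper does.
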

\begin{proof}
Since the property of being of finite type is stable under base-change, we can replace the ground field $k$ with an algebraic closure. For the rest of the proof we assume that $k$ is algebraically closed. 

Let $T$ be a maximal torus in $G$. By assumption, $T$ is a nontrivial split torus. We fix an identification $T \xrightarrow{\sim} \mathbb{G}_m^n$ for some $m\geq 1$. A $T$-bundle on $C$ is equivalent to a $n$-tuple $\vec{\mathcal{L}} = (\mathcal{L}_j)_{j=1}^n$ of line bundles on $C$. For such $n$-tuple $\vec{\mathcal{L}}$, we define a $n$-tuple of integers $\vec{\text{deg}}(\vec{\mathcal{L}})\vcentcolon = (\text{deg} \, \mathcal{L}_j)_{j=1}^n$. The tuple $\vec{\text{deg}}(\vec{\mathcal{L}})$ can be interpreted more canonically as a cocharacter in $X_{*}(T)$. This is the degree of the $T$-bundle $\vec{\mathcal{L}}$ as in \cite{schieder-hnstratification}[2.2.2].

Choose a Borel subgroup of $G$ containing $T$. Let $\lambda \in X_{*}(T)$ be a cocharacter of $T$ contained in the interior of the cone of $B$-dominant cocharacters. Using the identification $T \xrightarrow{\sim} \mathbb{G}_m^n$ we think of $\lambda$ as a tuple of integers $(m_j)_{j=1}^n$. Since $k$ is algebraically closed, there exist line bundles $\mathcal{L}_j^{\lambda}$ such that $\text{deg} \, \mathcal{L}_j^{\lambda} = m_j$ for each $j$. We can think of the tuple $\vec{\mathcal{L}}^{\lambda} = (\mathcal{L}_j)_{j=1}^n$ as a $T$-bundle.

Let $i \in I$. By Example \ref{example: logarithmic obstruction line bundle}, there is a canonical identification $H^0(x_i, \text{Ad}(\mathcal{L}_j^{\lambda}|_{x_i}) \cong k$. Set $s_{i,j} = -\frac{1}{|I|}\text{deg}\, \mathcal{L}^{\lambda}_{j}$. Example \ref{example: logarithmic obstruction line bundle} shows that the obstruction $\gamma_{\mathcal{L}_j^{\lambda}}^{D, s_{i,j}}$ satisfies $\gamma_{\mathcal{L}_j^{\lambda}}^{D, s_{i,j}} = 0$. Therefore the line bundle $\mathcal{L}_j^{\lambda}$ admits a logarithmic connection $\theta_{j}^{\lambda}$ with residue $s_{i,j}$ at $x_i$. The pair of $n$-tuples 
\[\left(\vec{\mathcal{L}}^{\lambda}, \vec{\theta}^{\lambda} \right)\vcentcolon =\left(\mathcal{L}_j^{\lambda}, \theta_j^{\lambda} \right)_{j=1}^n\]
is a $T$-bundle equipped with a logarithmic connection with poles at $D$.

Let $\rho$ denote the inclusion $\rho: T \hookrightarrow G$. By functoriality of logarithmic connections, there is an associated $G$-bundle with logarithmic connection \[(\mathcal{G}^{\lambda}, \nu^{\lambda}) \vcentcolon = \rho_*\left( \,\left(\vec{\mathcal{L}}^{\lambda}, \vec{\theta}^{\lambda}\right) \, \right)\]
This way we get an infinite set of $G$-bundles $\{\mathcal{G}^{\lambda}\}_{\lambda}$ indexed by the cocharacters $\lambda \in X_{*}(T)$ that are contained in the interior of the $B$-dominant cone. Each $\mathcal{G}^{\lambda}$ admits a logarithmic connection $\nu^{\lambda}$ with poles at $D$. In order to conclude the proposition, we shall show that the set $\{\mathcal{G}^{\lambda}\}_{\lambda}$ is unbounded in $\text{Bun}_{G}(C)$.

For each $\lambda$, we have $\mathcal{G}^{\lambda} = \rho_*(\vec{\mathcal{L}}^{\lambda})$. Let $\psi$ denote the inclusion $\psi: T \hookrightarrow B$. The $T$-bundle $\vec{\mathcal{L}}^{\lambda}$ is semistable in the sense of \cite{schieder-hnstratification}[2.2.3], because $T$ does not have nontrivial proper parabolic subgroups. By construction, $\lambda$ is the degree of $\psi_*(\vec{\mathcal{L}}^{\lambda})$ as in \cite{schieder-hnstratification}[2.2.2]. Since $\lambda$ is in the interior of the $B$-dominant cone, it follows that $\psi_*(\vec{\mathcal{L}}^{\lambda})$ is the canonical reduction of $\mathcal{G}^{\lambda}$ \cite{schieder-hnstratification}[2.4.1]. Therefore the cocharacter $\lambda$ is the Harder-Narasimhan type of $\mathcal{G}^{\lambda}$. Hence the $G$-bundles $\{\mathcal{G}^{\lambda}\}_{\lambda}$ have distinct Harder-Narasimhan types. Since the image of each Harder-Narasimhan stratum is constructible in $\text{Bun}_{G}(C)$ \cite{schieder-hnstratification}[Thm. 2.3.3(a)], it follows that the set $\{\mathcal{G}^{\lambda}\}_{\lambda}$ must be unbounded.
\end{proof}

Let's return to the case when $G$ is an arbitrary smooth connected linear algebraic group. Using Lemma \ref{lemma: stack of logarithmic connections is not quasicompact} we can prove the following.
\begin{prop} \label{prop: characterization of quasicompactness}
Suppose that $D$ is nonempty and that the characteristic of $k$ is $0$. Then the stack $\text{Conn}_{G}^{D}(C)$ is of finite type over $k$ if and only if the group $G$ is unipotent.
\end{prop}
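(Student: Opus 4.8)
The plan is to prove the two implications separately, using the forgetful morphism $Forget^D$ together with the reductive quotient $\rho_G \colon G \to G/U$.

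Suppose first that $G$ is unipotent. Then $U = G$, the quotient $G/U$ is trivial, and Proposition~\ref{prop: quasicompactness associated bundle morphism unipotent}(iv) gives that $\text{Bun}_G(C)$ is of finite type over $k$. The projection $\text{Bun}_G^{Ad, D}(C) \to \text{Bun}_G(C)$ is the base change of $\prod_{i \in I} \pi \colon \prod_{i \in I}[G\backslash \mathfrak g] \to \prod_{i \in I} \text{B}G$ along $\prod_{i} \psi_i$; since each $\pi$ is the total space of a vector bundle over $\text{B}G$, hence representable, affine and of finite type, so is this projection. Therefore $\text{Bun}_G^{Ad, D}(C)$ is of finite type over $k$, and composing with the finite-type morphism $Forget^D$ of Proposition~\ref{prop: moduli logarithmic connections affine bundle} shows that $\text{Conn}_G^D(C)$ is of finite type over $k$.

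For the converse I argue by contraposition: assuming $G$ is not unipotent, I construct an unbounded family inside $\text{Conn}_G^D(C)$. Since $G$ is not unipotent, $G/U$ is a nontrivial connected reductive group and $G$ contains a nontrivial maximal torus $T$. In characteristic $0$ the composite $\rho_G|_T \colon T \to G/U$ is a closed immersion (a torus meets the unipotent radical trivially) onto a maximal torus $\bar T$ of $G/U$, inducing an isomorphism $X_*(T) \cong X_*(\bar T)$. Fixing a Borel $\bar B \supseteq \bar T$ of $G/U$ and repeating the construction in the proof of Lemma~\ref{lemma: stack of logarithmic connections is not quasicompact} verbatim --- writing $T \cong \mathbb{G}_m^n$, taking cocharacters $\lambda$ in the interior of the $\bar B$-dominant cone, realizing each as a tuple of line bundles of prescribed degrees, and equipping each factor with a logarithmic connection via Example~\ref{example: logarithmic obstruction line bundle} (possible because $D \neq \emptyset$) --- produces $T$-bundles with logarithmic connections. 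Pushing these forward along $T \hookrightarrow G$ yields a family $(\mathcal G^\lambda_G, \nu^\lambda_G)$ of objects of $\text{Conn}_G^D(C)$.

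It remains to detect the unboundedness of $\{\mathcal G^\lambda_G\}$ in $\text{Bun}_G(C)$, and here lies the main obstacle, since the Harder--Narasimhan theory I rely on is available only for the reductive quotient. I would apply the finite-type morphism $(\rho_G)_* \colon \text{Bun}_G(C) \to \text{Bun}_{G/U}(C)$ of Proposition~\ref{prop: quasicompactness associated bundle morphism unipotent}(iii). Because $\rho_G|_T$ is an isomorphism onto $\bar T$, the bundle $(\rho_G)_* \mathcal G^\lambda_G$ is exactly the $G/U$-bundle obtained by pushing the degree-$\lambda$ torus bundle along $\bar T \hookrightarrow G/U$; by the canonical-reduction computation in the proof of Lemma~\ref{lemma: stack of logarithmic connections is not quasicompact} these have pairwise distinct Harder--Narasimhan types and hence form an unbounded family. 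As a finite-type morphism carries bounded substacks to bounded subsets, the family $\{\mathcal G^\lambda_G\}$ cannot be bounded in $\text{Bun}_G(C)$. Finally, the composite $\text{Conn}_G^D(C) \xrightarrow{Forget^D} \text{Bun}_G^{Ad,D}(C) \to \text{Bun}_G(C)$ sends $(\mathcal G^\lambda_G, \nu^\lambda_G)$ to $\mathcal G^\lambda_G$; if $\text{Conn}_G^D(C)$ were of finite type its image would be bounded, a contradiction. Hence $\text{Conn}_G^D(C)$ is not of finite type, completing the contrapositive.
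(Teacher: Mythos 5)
Your proof is correct, and while the forward implication coincides with the paper's (compose the finite-type morphisms $Forget^{D}$ and $\text{Bun}_{G}^{Ad,D}(C)\to\text{Bun}_{G}(C)$ with Proposition \ref{prop: quasicompactness associated bundle morphism unipotent}(iv)), your converse takes a genuinely different route. The paper uses the characteristic-zero Levi decomposition to split $1\to U\to G\to \overline{G}\to 1$, so that $\text{Conn}^{D}_{\overline{G}}(C)$ becomes a retract of $\text{Conn}^{D}_{G}(C)$ via $q_{*}\circ\iota_{*}=\mathrm{id}$, and then quotes Lemma \ref{lemma: stack of logarithmic connections is not quasicompact} for the reductive quotient as a black box. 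You instead avoid the splitting entirely: you lift the torus construction of that lemma into $G$ itself through a maximal torus $T\subset G$ (using only that $T$ maps isomorphically onto a maximal torus of $G/U$), and then detect unboundedness of the resulting family $\{\mathcal{G}^{\lambda}_{G}\}$ by pushing down along $(\rho_G)_*$ and reading off the pairwise distinct Harder--Narasimhan types downstairs. What your approach buys is independence from Mostow's theorem, which is the one genuinely characteristic-zero input in the paper's converse; the fact that a maximal torus meets $U$ trivially and surjects onto a maximal torus of $G/U$ holds over any perfect field. What the paper's retract argument buys is brevity and complete insulation from the internals of Lemma \ref{lemma: stack of logarithmic connections is not quasicompact}, whereas you must reopen its proof (including the reduction to an algebraically closed base field, which you should state explicitly rather than leave implicit in ``verbatim''). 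One cosmetic remark: boundedness of $\{(\rho_G)_*\mathcal{G}^{\lambda}_{G}\}$ follows from continuity alone (quasicompact images of quasicompact substacks), so invoking that $(\rho_G)_*$ is of finite type is more than you need, though harmless.
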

\begin{proof}
Assume that the group $G$ is unipotent. Consider the composition
\[ h_{G} : \text{Conn}^{D}_{G}(C) \xrightarrow{\text{Forget}^{D}} \text{Bun}_{G}^{Ad, D}(C) \rightarrow \text{Bun}_{G}(C)\]
By definition, $\text{Bun}_{G}^{Ad, D}(C) \rightarrow \text{Bun}_{G}(C)$ is of finite type. Proposition \ref{prop: moduli logarithmic connections affine bundle} shows that $\text{Forget}^{D}$ is also of finite type. Therefore, the composition $h_{G}$ is of finite type. By Proposition \ref{prop: quasicompactness associated bundle morphism unipotent} (iv), $\text{Bun}_{G}(C)$ is of finite type over $k$. We conclude that $\text{Conn}_{G}^{D}(C)$ is of finite type over $k$.

Conversely, suppose that $G$ is not unipotent. Let $U$ denote the unipotent radical of $G$. We have a short exact sequence of algebraic groups
\[ 1 \rightarrow U \rightarrow G \rightarrow \overline{G} \rightarrow 1\]
where $\overline{G}$ is a nontrivial connected reductive group. Since the characteristic of $k$ is $0$, the short exact sequence above admits a splitting. Therefore we can view $G = U \rtimes \overline{G}$. Consider the chain of morphisms
\[ \overline{G} \xhookrightarrow{ \iota} U \rtimes \overline{G} = G \xtwoheadrightarrow{q} \overline{G} \]
By functoriality of logarithmic connections, this induces a chain of morphisms of stacks
\[ \text{Conn}^{D}_{\overline{G}}(C) \xrightarrow{\iota_{*}} \text{Conn}_{G}^{D}(C) \xrightarrow{q_{*}} \text{Conn}_{\overline{G}}^{D}(C) \]
By definition, the composition $q_{*} \circ \iota_{*}$ is the identity. Hence $\iota_{*}$ exhibits $\text{Conn}^{D}_{\overline{G}}(C)$ as a subfunctor of $\text{Conn}^{D}_{G}(C)$. Assume for the sake of contradiction that $\text{Conn}^{D}_{G}(C)$ is quasicompact. This would imply that the substack $\text{Conn}^{D}_{G}(C)$ is quasicompact, thus contradicting Lemma \ref{lemma: stack of logarithmic connections is not quasicompact}.
\end{proof}
For each $i \in I$, choose an orbit $O_{i}$ for the adjoint action of $G$ on its Lie algebra $\mathfrak{g}$. Each orbit $O_i$ is a smooth locally closed subscheme of $\mathfrak{g}$. After quotienting by the action of $G$, we get a locally closed substack
\[\prod_{i \in I} \left[ G \, \backslash \, O_i \right] \, \hookrightarrow \, \prod_{i \in I}\left[ G \, \backslash \, \mathfrak{g} \right] \]
\begin{defn}
$\text{Conn}_G^{D, O_i}(C)$ is defined to be the locally closed substack of $\text{Conn}_G^{D}(C)$ given by the following cartesian diagram
 \[\xymatrix{
	    \text{Conn}_G^{D, O_i}(C) \ar[r] \ar[d] & \text{Conn}^D_{G}(C) \ar[d]^{Forget^D}\\
		\prod_{i \in I} \left[ G \, \backslash \, O_i \right] \times_{\prod_{i \in I}\left[ G \, \backslash \, \mathfrak{g} \right]} \text{Bun}_{G}^{\text{Ad}, D}(C) \ar[r] & \text{Bun}_{G}^{Ad, \, D}(C)} \]
\end{defn}
The stack $\text{Conn}_G^{D, O_i}(C)$ parametrizes $G$-bundles with logarithmic connections whose residue at $x_i$ lies in the orbit $O_i$.
 \begin{thm} \label{thm: quasicompactness logarithmic connections}
Suppose that $\text{char} \, k= 0$. The algebraic stack $\text{Conn}_G^{D, O_i}(C)$ is of finite type over $k$.
 \end{thm}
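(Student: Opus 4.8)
The plan is to reduce to the case $G = \text{GL}_n$ and there bound the Harder--Narasimhan polygon of the underlying vector bundle, in the spirit of Proposition \ref{prop: quasicompactness regular connections} but with the logarithmic degree formula of Example \ref{example: atiyah class degree for gln} playing the role of Lemma \ref{lemma: weil theorem}. First I would assemble the forgetful morphisms. By Proposition \ref{prop: moduli logarithmic connections affine bundle} the map $\text{Forget}^D$ is of finite type, and the projection $\text{Bun}_G^{Ad, D}(C) \to \text{Bun}_G(C)$ is of finite type; restricting to the residue orbits only inserts a locally closed immersion, so the composite $b : \text{Conn}_G^{D, O_i}(C) \to \text{Bun}_G(C)$ is of finite type. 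Since $\text{Conn}_G^{D, O_i}(C)$ is locally of finite type, it suffices to prove quasicompactness, and for this it is enough to factor $b$ through a quasicompact open substack of $\text{Bun}_G(C)$. Fixing a faithful representation $\rho : G/U \hookrightarrow \text{GL}_n$ of the reductive quotient and setting $\varphi = \rho \circ \rho_G$, Propositions \ref{prop: quasicompactness associated bundle map} and \ref{prop: quasicompactness associated bundle morphism unipotent} show that $\varphi_* : \text{Bun}_G(C) \to \text{Bun}_{\text{GL}_n}(C)$ is of finite type. Because the preimage of a quasicompact open under a finite type morphism is again quasicompact, it suffices to produce a quasicompact open $\mathcal{U} \subset \text{Bun}_{\text{GL}_n}(C)$ through which $\varphi_* \circ b$ factors.

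Next I would track the residues. By the covariant functoriality of Subsection \ref{subsection: functoriality logarithmic}, a logarithmic $G$-connection with residue $s_i \in O_i$ induces a logarithmic connection on $\varphi_* \mathcal{P}$ with residue $\text{Ad}\,\varphi|_{x_i}(s_i)$, computed through the linear map $d\varphi : \mathfrak{g} \to \mathfrak{gl}_n$. Since $d\varphi$ is $G$-equivariant it carries the single adjoint orbit $O_i$ into a single $\text{GL}_n$-conjugacy orbit, so all of these induced residues share one fixed characteristic polynomial whose eigenvalue multiset depends only on $O_i$. Checking quasicompactness on geometric points, the problem reduces to the following $\text{GL}_n$-boundedness statement: over an algebraically closed field, the rank $n$ vector bundles $\mathcal{E}$ on $C$ admitting a logarithmic connection whose residue at each $x_i$ has the prescribed eigenvalue multiset form a bounded family.

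The heart of the argument, and the step I expect to be the main obstacle, is this $\text{GL}_n$-boundedness. Set $d = \deg \Omega^1_C(D) = 2g - 2 + |I|$. Given such $(\mathcal{E}, \nabla)$ with Harder--Narasimhan filtration $0 = \mathcal{F}_0 \subset \cdots \subset \mathcal{F}_l = \mathcal{E}$ and slopes $\nu_1 > \cdots > \nu_l$, the key dichotomy comes from the second fundamental form $\beta_k : \mathcal{F}_k \to (\mathcal{E}/\mathcal{F}_k) \otimes \Omega^1_C(D)$ induced by $\nabla$, which is $\mathcal{O}_C$-linear because the Leibniz terms die in the quotient. If $\nu_k - \nu_{k+1} > d$ then $\mu_{\min}(\mathcal{F}_k) = \nu_k > \nu_{k+1} + d = \mu_{\max}\big((\mathcal{E}/\mathcal{F}_k) \otimes \Omega^1_C(D)\big)$, so $\beta_k = 0$ by the standard slope inequality \cite{huybrechts.lehn}[Lemma 1.3.3]; that is, $\mathcal{F}_k$ is $\nabla$-stable. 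For a $\nabla$-stable subbundle the logarithmic Fuchs relation of Example \ref{example: atiyah class degree for gln}, applied to $\mathcal{F}_k$ with its induced connection, gives $\deg \mathcal{F}_k = -\sum_i \text{tr}\big(\text{Res}_{x_i}\nabla|_{\mathcal{F}_k|_{x_i}}\big)$, and since $\text{Res}_{x_i}\nabla$ preserves the fiber $\mathcal{F}_k|_{x_i}$ this trace is a partial sum of the fixed eigenvalue multiset. As each such $\deg \mathcal{F}_k$ is also an integer, it lies in a finite set, so $|\deg \mathcal{F}_k| \le A$ for a constant $A$ depending only on the orbits $O_i$. Combining the two facts bounds the whole polygon: at the first index $k_0$ with a large gap one has $\nu_{k_0} \le \deg \mathcal{F}_{k_0} / \text{rank}\,\mathcal{F}_{k_0} \le A$ while all earlier gaps are $\le d$, so $\nu_1 \le A + (n-1)d$; a symmetric estimate at the last large gap, together with $|\deg \mathcal{E}| \le A$, bounds $\nu_l$ from below; and if there is no large gap all slopes lie within $(n-1)d$ of the fixed average $\deg \mathcal{E}/n$. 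Thus $\mu_{\max}$ is bounded above and $\mu_{\min}$ below by constants depending only on $g$, $|I|$, $n$ and the orbits, leaving finitely many possible Harder--Narasimhan polygons.

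Finally I would take $\mathcal{U}$ to be a finite union of the quasicompact open Harder--Narasimhan strata $\text{Bun}_{\text{GL}_n}^{\le \lambda}(C)$ of \cite{schieder-hnstratification} covering all the bounded polygons above. The estimate shows every geometric point of $\text{Conn}_G^{D, O_i}(C)$ maps into $\mathcal{U}$ under $\varphi_* \circ b$; since $\mathcal{U}$ is open its preimage is an open substack containing all points, hence all of $\text{Conn}_G^{D, O_i}(C)$, so $\varphi_* \circ b$ factors through the quasicompact open $\varphi_*^{-1}(\mathcal{U})$. As $\text{Conn}_G^{D, O_i}(C) \to \varphi_*^{-1}(\mathcal{U})$ is of finite type with quasicompact target, $\text{Conn}_G^{D, O_i}(C)$ is quasicompact, and being locally of finite type it is of finite type over $k$. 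Beyond the inputs already cited, characteristic zero enters through the reductions of Proposition \ref{prop: quasicompactness associated bundle morphism unipotent} and through the passage from the Fuchs relation in $k$ to an equality of integers.
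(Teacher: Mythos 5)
Your proposal is correct, and the overall skeleton matches the paper's: reduce along $\text{Forget}^D$ and $\text{Bun}_G^{Ad,D}(C) \to \text{Bun}_G(C)$, push forward through $G \to G/U \hookrightarrow \text{GL}_n$ using Propositions \ref{prop: quasicompactness associated bundle map} and \ref{prop: quasicompactness associated bundle morphism unipotent}, and then bound the Harder--Narasimhan polygons of the underlying rank $n$ bundles, using the Fuchs-type relation of Example \ref{example: atiyah class degree for gln} together with the integrality of degrees to control $\nabla$-stable subbundles. Where you genuinely diverge is in the key estimate on $\mu_{\max}$. The paper imports Simpson's machinery: it realizes a logarithmic connection as a $\Lambda$-module for the subring of differential operators generated by $\Omega^1(D)^{\vee}$ and quotes the proof of \cite{Simpson-repnI}[Lemma 3.3] to get $\mu_1 \leq mn + \mu(\mathcal{G})$, where $\mathcal{G}$ is the smallest $\nabla$-stable subsheaf containing the first HN step and $m = \max\{4g-3-|I|,0\}$. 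You instead argue directly on the HN filtration via the second fundamental form: a gap $\nu_k - \nu_{k+1}$ exceeding $\deg \Omega^1_C(D) = 2g-2+|I|$ forces $\beta_k = 0$ by the slope inequality, hence $\mathcal{F}_k$ itself is $\nabla$-stable and its degree is pinned down by the residue eigenvalues. This is the natural logarithmic analogue of the paper's own proof of Proposition \ref{prop: quasicompactness regular connections} (with ``$\nabla$-stable plus Fuchs relation'' correctly replacing ``direct summand plus Weil,'' which is the right fix since Lemma \ref{lemma: weil theorem} fails for logarithmic connections). Your route is more self-contained --- no $\Lambda$-module formalism --- and yields a sharper bound (each consecutive gap is at most $2g-2+|I|$, rather than a single bound of order $(4g-3-|I|)n$ on $\mu_1$), in the same way the paper notes its Proposition \ref{prop: quasicompactness regular connections} improves on Simpson's bound in the unpunctured case; the paper's route buys brevity by leaning on an existing lemma. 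All the supporting details you use (the $\mathcal{O}_C$-linearity of the second fundamental form, the compatibility of residues with $\nabla$-stable subbundles, and the finiteness of integral sub-multiset sums of the fixed eigenvalue data) are sound.
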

 \begin{proof}
 By Proposition \ref{prop: moduli logarithmic connections affine bundle}, the map $Forget^D$ is of finite type. By definition the map $\text{Bun}^{Ad, \, D}_G(C) \longrightarrow \text{Bun}_G(C)$ is of finite type. Hence it suffices to show that the composition
 \[ h_G: \text{Conn}_G^{D, O_i}(C) \hookrightarrow \text{Conn}_G^{D}(C) \xrightarrow{Forget^D} \text{Bun}_G^{Ad, \, D}(C) \longrightarrow \text{Bun}_G(C) \]
  factors through a quasicompact open substack of $\text{Bun}_{G}(C)$.
 
 Let $U$ be the unipotent radical of $G$. We denote by $\rho_{G}: G \rightarrow G/U$ the corresponding quotient morphism. Choose a faithful representation $\rho: G/U \longrightarrow \text{GL}_n$ of the reductive group $G/U$. Let $(\rho \circ \rho_{G})_{*}O_i$ denote the unique $\text{GL}_n$-orbit in $\mathfrak{gl}_n$ containing the image $(\rho \circ \rho_{G})(O_i)$. The functoriality properties described in Subsection \ref{subsection: functoriality logarithmic} imply that there is a commutative diagram
\[\xymatrix{
		\text{Conn}_{G}^{D, O_i}(C) \ar[r] \ar[d]^{h_{G}} & \text{Conn}_{G/U}^{D, \, (\rho_{G})_*(O_i)}(C) \ar[r] \ar[d]^{h_{G/U}} & \text{Conn}_{\text{GL}_n}^{D, \, (\rho \circ \rho_{G})_{*}(O_i)}(C) \ar[d]^{h_{\text{GL}_n}}\\ \text{Bun}_{G}(C) \ar[r]^{(\rho_{G})_*} &
		\text{Bun}_{G/U}(C) \ar[r]^{\rho_{*}} & \text{Bun}_{\text{GL}_n}(C) } \]
The bottom horizontal maps are of finite type by Propositions \ref{prop: quasicompactness associated bundle map} and \ref{prop: quasicompactness associated bundle morphism unipotent}. Therefore it suffices to show that $\text{Conn}^{D, \, (\rho \circ \rho_{G})_{*} O_i}_{\text{GL}_n}(C)$ is quasicompact. We shall establish that $h_{\text{GL}_n}$ factors through a quasicompact open substack of $\text{Bun}_{\text{GL}_n}(C)$. Just as in the proof of Proposition \ref{prop: quasicompactness regular connections}, it suffices to check that the set of Harder-Narasimhan polygons of geometric points $\overline{t} \longrightarrow \text{Bun}_{\text{GL}_n}(C)$ with nonempty fiber $\text{Conn}^{D, O_i}_{\text{GL}_n}(C)_{\overline{t}}$ is a finite set.
		
Let $\mu = (\mu_1 \geq \mu_2\geq ...\geq \mu_n)$ be a tuple of rational numbers in $\frac{1}{n!}\mathbb{Z}$. Let $\mathcal{E}: \overline{t} \longrightarrow \text{Bun}_{\text{GL}_n}(C)$ be a vector bundle on $C\times \overline{t}$, where $\overline{t}$ is $\text{Spec}$ of an algebrically closed field. Suppose that the Harder-Narasimhan polygon of $\mathcal{E}$ is given by $\mu$. Assume that the fiber $\text{Conn}^{D, O_i}_{\text{GL}_n}(C)_{\overline{t}}$ is nonempty. This means that $\mathcal{E}$ admits a logarithmic connection $\theta$ with residue $\text{Res}_{x_i}  \theta$ in the conjugacy class $O_i|_{\kappa(\overline{t})}$. Set $s_i \vcentcolon = \text{Res}_{x_i}  \theta$.

After choosing a trivialization of $\mathcal{E}|_{x_i \times \overline{t}}$, the endomorphism $s_i$ is represented by a matrix in $\mathfrak{gl}_n \otimes \kappa(\overline{t})$. Let $(\lambda_i^{(l)})_{l = 1}^n$ be the tuple of eigenvalues of $s_i$. This tuple does not depend up to permutation on the choice of trivialization of $\mathcal{E}|_{x_i \times \overline{t}}$.

Let $B(\{1, 2, ..., n\})$ denote the power set of $\{1, 2, ..., n\}$. Define the set of $I$-tuples
\[ A \vcentcolon = \left\{ (J_i)_{i \in I} \in B(\{1, 2, ... , n\})^{I} \, \mid \, J_i \neq \emptyset \; \text{for all $ i \in I$} \; \; \, \text{and} \; \,  \sum_{ i \in I} \sum_{j \in J_i} \lambda_{i}^{(j)} \in \mathbb{Z} \right\} \]
Set $M \vcentcolon = \underset{ (J_i) \in A}{\text{max}} \, \left\lvert \sum_{i \in I}  \sum_{j \in J_i} \lambda_i^{(j)}\right\rvert$. The quantity $M$ does not depend on the choice of $\mathcal{E}$, because the eigenvalues are completely determined by the conjugacy classes $(\rho \circ \rho_{G})_{*}(O_i)$ of matrices. We claim that $\mu$ satisfies the following two conditions
\begin{enumerate}[(a)]
    \item $\sum_{j =1}^n \mu_j = - \sum_{i \in I}\text{tr} \, s_i$.
    \item $\mu_{1} \leq \left(\text{max}\{4g - 3 -|I|, 0\}\right) \cdot n + M$.
\end{enumerate}
The claim implies that there are finitely many possibilities for the Harder-Narasimhan polygon $\mu \in \frac{1}{n!}\mathbb{Z}^n$. Indeed, (a) and (b) imply the following chain of inequalities
\[ - \sum_{i \in I} \text{tr}\, s_i = \mu_n + \sum_{i=1}^{n-1} \mu_i  \leq \mu_n + (n-1) \cdot \mu_1 \leq \mu_n + (n-1) \cdot\left(\left(\text{max}\{4g - 3 -|I|, 0\}\right) \cdot n + M\right)\]
This yields a lower bound $\mu_n \geq -(n-1) \cdot\left(\left(\text{max}\{4g - 3 -|I|, 0\}\right) \cdot n + M\right)- \sum_{i \in I} \text{tr}\, s_i$. It would follow that the rational numbers
\begin{gather*}
-(n-1) \cdot\left(\left(\text{max}\{4g - 3 -|I|, 0\}\right) \cdot n + M\right)- \sum_{i \in I} \text{tr}\, s_i \leq \mu_n \leq \mu_{n-1} \leq \ldots \leq \mu_1 \leq  \left(\text{max}\{4g - 3 -|I|, 0\}\right) \cdot n + M
\end{gather*}
are restricted to finitely many possibilities, since they must lie in the lattice $\frac{1}{n!} \mathbb{Z}$. Hence it is sufficient to show the claims (a) and (b).

\noindent \textit{Proof of (a):} This follows from the discussion in Example \ref{example: atiyah class degree for gln}, because $\sum_{j =1}^n \mu_j = \text{deg} \, \mathcal{E}$.
\medskip

\noindent \textit{ Proof of (b):} Set $m \vcentcolon = \text{max}\{4g-3-|I|, 0\}$. Suppose that the Harder-Narasimhan filtration of $\mathcal{E}$ is given by
\[0 \subset \mathcal{F}_1 \subset \mathcal{F}_2 \subset ... \subset \mathcal{F}_l = \mathcal{E}   \]
 We use the interpretation of logarithmic $\text{GL}_n$-connections as $\Lambda$-modules, as in \cite{Simpson-repnI}[\S 2]. Let $\mathcal{D}_{C\times \overline{t}/ \, \overline{t}}$ denote the usual sheaf of rings of differential operators on $C\times \overline{t}$ \cite{Simpson-repnI}[\S 2, pg. 85]. Define $\Lambda$ to be the subsheaf of rings of $\mathcal{D}_{C\times \overline{t}/ \, \overline{t}}$ generated by $\Omega^{1}_{C\times \overline{t} / \overline{t}}(D)^{\vee} \subset T_{C\times \overline{t} / \overline{t}}$ (see \cite{Simpson-repnI}[\S2, pg. 87]). Recall that there is a filtration of $\mathcal{D}_{C\times \overline{t}/ \, \overline{t}}$ given by the order of the differential operator. This induces a filtration on the subsheaf $\Lambda \subset \mathcal{D}_{C\times \overline{t}/ \, \overline{t}}$, which endows $\Lambda$ with the structure of a sheaf ring of differential operators as defined in \cite{Simpson-repnI}[\S 2, pg.77]. The first graded piece $\text{Gr}_1 \Lambda$ associated to the filtration is $\text{Gr}_1 \Lambda = \Omega_{C\times \overline{t} / \, \overline{t}}^{1}(D)^{\vee}$. Observe that the twist $\Omega_{C\times \overline{t} / \, \overline{t}}^{1}(D)^{\vee}(m)$ is generated by global sections. This follows from a standard cohomology argument using Serre duality. The proof of Lemma 3.3 in \cite{Simpson-repnI}[\S3] shows that $\mu_1 = \mu(\mathcal{F}_1) \leq mn + \mu(\mathcal{G})$, where $\mathcal{G}$ is the smallest subbundle containing $\mathcal{F}_1$ and preserved by the logarithmic connection $\theta$. We will show that any subsheaf $\mathcal{G} \subset \mathcal{E}$ preserved by the logarithmic connection satisfies $\mu(\mathcal{G}) \leq M $.

Restrict the logarithmic connection $\theta$ to the subbundle $\mathcal{G}$ in order to obtain a logarithmic connection $\theta^{\mathcal{G}}$ on $\mathcal{G}$. By assumption the residue $s_i \in \text{End}(\mathcal{E}|_{x_i \times \overline{t}})$ preserves the subspace $\mathcal{G}|_{x_i \times \overline{t}}$. The residue $s_i^{\mathcal{G}} \vcentcolon = \text{Res}_{x_i} \, \theta^{\mathcal{G}}$ is the restriction of $s_i$ to $\mathcal{G}|_{x_i \times \overline{t}}$. For all $i \in I$, there is a nonempty subset $J_i \subset \{1, 2, ... , n\}$ such that the eigenvalues of $s_i^{\mathcal{G}}$ are $(\lambda_i^{(j)})_{j \in J_i}$.

The discussion in Example \ref{example: atiyah class degree for gln} applied to $\mathcal{G}$ shows that
\[ \text{deg} \, \mathcal{G} = -\sum_{i \in I} \text{tr} \, s_i^{\mathcal{G}} = -\sum_{i \in I} \sum_{j \in J_i} \lambda_i^{(j)}\]
Since $\text{deg} \, \mathcal{G} \in \mathbb{Z}$, we have $(J_i)_{i \in I} \in A$. By definition, the right hand side is bounded by $M$. This implies that $\mu(\mathcal{G}) \leq M$, thus concluding the proof of the claim. 
 \end{proof}
 \end{subsection}
 \textbf{Acknowledgements:} I would like to thank my advisor Nicolas Templier for his help in improving the manuscript, and to Indranil Biswas and Arideep Saha for helpful comments. In addition, I would like to warmly thank an anonymous referee for many useful suggestions. I acknowledge support by NSF grants DMS-1454893 and DMS-2001071.
\end{section}

  \par\nopagebreak
    
\footnotesize{\bibliography{moduli_stack_G_connections.bib}
\bibliographystyle{alpha}}

    \textsc{Department of Mathematics, Columbia university}, \texttt{af3358@columbia.edu}
\end{document}